\def\rr{{\mathbb R}}
\def\rn{{{\rr}^n}}
\def\zz{{\mathbb Z}}
\def\cc{{\mathbb C}}
\def\nn{{\mathbb N}}
\def\ca{{\mathcal A}}
\def\cf{{\mathcal F}}
\def\cm{{\mathcal M}}
\def\cn{{\mathcal N}}
\def\cp{{\mathcal P}}
\def\mr{{\mathcal R}}
\def\cs{{\mathcal S}}
\def\cx{{\mathcal X}}
\def\fz{\infty}
\def\az{\alpha}
\def\bz{\beta}
\def\bgz{{\Gamma}}
\def\lz{\lambda}
\def\blz{\Lambda}
\def\oz{{\omega}}
\def\tz{\theta}
\def\vz{\varphi}
\def\lf{\left}
\def\r{\right}
\def\hs{\hspace{0.25cm}}
\def\ls{\lesssim}
\def\ov{\overline}
\def\noz{\nonumber}
\def\wz{\widetilde}
\def\wh{\widehat}
\def\st{\subset}
\def\com{\complement}
\def\bh{\backslash}
\def\dist{\mathop\mathrm{\,dist\,}}
\def\supp{\mathop\mathrm{\,supp\,}}
\def\loc{\mathop\mathrm{\,loc\,}}
\def\div{\mathop\mathrm{div}}
\def\essinf{\mathop\mathrm{\,ess\,inf\,}}
\def\esup{\mathop\mathrm{\,ess\,sup\,}}
\def\vp{{L^{p(\cdot)}(\mathbb{R}^n)}}
\def\vhp{H_L^{p(\cdot)}(\mathbb{R}^n)}
\def\nhp{H_{\mathcal{N}_h}^{p(\cdot)}(\mathbb{R}^n)}
\def\size{(p(\cdot),\,M,\,\varepsilon)_L}
\def\mol{\mathbb{H}_{L,\,M}^{p(\cdot),\,\vez}(\mathbb{R}^n)}
\def\mhp{H_{L,\,M}^{p(\cdot),\,\vez}(\mathbb{R}^n)}
\def\test{\mathcal{M}_{p(\cdot),\,L}^{\varepsilon,\,M}(\mathbb{R}^n)}
\def\tez{\mathcal{M}_{p(\cdot),\,L}^{M,\,\ast}(\mathbb{R}^n)}
\def\bmo{{\rm BMO}_{p(\cdot),\,L}^M(\mathbb{R}^n)}
\def\dbmo{{\rm BMO}_{p(\cdot),\,L^\ast}^M(\mathbb{R}^n)}
\def\uz{\underline}
\def\rnn{\rr_+^{n+1}}
\def\dydt{\,\frac{dy\,dt}{t^{n+1}}}
\def\vez{\varepsilon}
\def\ujb{{U_j(B)}}
\theoremstyle{definition}
\newtheorem{theorem}{Theorem}[section]
\newtheorem{proposition}[theorem]{Proposition}
\newtheorem{lemma}[theorem]{Lemma}
\newtheorem{corollary}[theorem]{Corollary}
\newtheorem{definition}[theorem]{Definition}
\newtheorem{remark}[theorem]{Remark}
\newtheorem{assumption}[theorem]{Assumption}
\numberwithin{equation}{section}
\begin{document}

\title{\bf\Large Variable Hardy Spaces Associated with Operators
Satisfying Davies-Gaffney Estimates
\footnotetext{\hspace{-0.35cm} 2010 {\it
Mathematics Subject Classification}. Primary 42B30; Secondary 42B35, 42B25, 35J15, 47B06.
\endgraf {\it Key words and phrases}. second order divergence form elliptic operator,
Davies-Gaffney estimate, variable Hardy space, square function, maximal function,
molecule, Riesz transform.
\endgraf Dachun Yang is supported by the National
Natural Science Foundation of China (Grant Nos. 11571039, 11671185 and 11361020).
Ciqiang Zhuo is supported by the Construct Program of the Key Discipline in
Hunan Province of China. }}

\author{Dachun Yang, Junqiang Zhang\,\footnote{Corresponding author / January 7, 2017.}\ \ and Ciqiang Zhuo}
\date{ }
\maketitle

\vspace{-0.8cm}

%%%%-------------------------------------------------------------------
\begin{center}
\begin{minipage}{13.8cm}
{\small {\bf Abstract}\quad
Let $L$ be a one-to-one operator of type $\omega$ in $L^2(\mathbb{R}^n)$, with $\omega\in[0,\,\pi/2)$,
which has a bounded holomorphic functional calculus and satisfies the Davies-Gaffney estimates.
Let $p(\cdot):\ \mathbb{R}^n\to(0,\,1]$ be a variable exponent function
satisfying the globally log-H\"{o}lder continuous condition.
In this article, the authors introduce the variable Hardy space $H^{p(\cdot)}_L(\mathbb{R}^n)$
associated with $L$. By means of variable tent spaces, the authors establish
the molecular characterization of $H^{p(\cdot)}_L(\mathbb{R}^n)$.
Then the authors show that the dual space of $H^{p(\cdot)}_L(\mathbb{R}^n)$ is the BMO-type space
${\rm BMO}_{p(\cdot),\,L^\ast}(\mathbb{R}^n)$, where $L^\ast$ denotes the adjoint operator of $L$.
In particular, when $L$ is the second order divergence form elliptic operator
with complex bounded measurable coefficients, the authors obtain the non-tangential
maximal function characterization of $H^{p(\cdot)}_L(\mathbb{R}^n)$ and
show that the fractional integral $L^{-\alpha}$ for $\alpha\in(0,\,\frac12]$
is bounded from $H_L^{p(\cdot)}(\mathbb{R}^n)$ to $H_L^{q(\cdot)}(\mathbb{R}^n)$
with $\frac1{p(\cdot)}-\frac1{q(\cdot)}=\frac{2\alpha}{n}$ and
the Riesz transform $\nabla L^{-1/2}$ is bounded from $H^{p(\cdot)}_L(\mathbb{R}^n)$
to the variable Hardy space $H^{p(\cdot)}(\mathbb{R}^n)$.
}
\end{minipage}
\end{center}

%%%%-------------------------------------------------------------------

\vspace{0.2cm}

\section{Introduction\label{s1}}
\hskip\parindent
The variable Lebesgue space $\vp$ is a generalization of classical Lebesgue spaces,
via replacing the constant exponent $p$ by a variable exponent function
$p(\cdot):\ \rn\to(0,\,\fz)$, which consists of all measurable functions $f$
such that, for some $\lz\in(0,\,\fz)$,
\begin{equation}\label{eq-vl}
\int_\rn \lf[\frac{|f(x)|}{\lz}\r]^{p(x)}\,dx<\fz.
\end{equation}
The study of variable Lebesgue spaces originated
from Orlicz \cite{or31} in 1931, which were further developed by Nakano \cite{na50,na51}.
The next major step in the investigation of variable function
spaces was made in the article of Kov\'{a}\v{c}ik and R\'{a}kosn\'{\i}k \cite{kr91} in 1991.
Since then, the interest in variable function spaces has increased steadily.
Nowadays these variable function spaces have been widely used in various
analysis branches, for example, in harmonic analysis \cite{cr03,cf13,die04,dhhr11},
in fluid dynamics \cite{acm02,ruz00}, in image processing \cite{cgzl08},
in partial differential equations and variational calculus \cite{acm05,hhl08,su09}.

Recently, as a generalization of classical Hardy spaces,
Nakai and Sawano \cite{ns12} introduced variable Hardy spaces $H^{p(\cdot)}(\rn)$,
established their atomic characterizations and
investigated their dual spaces.
Independently, Cruz-Uribe and Wang \cite{cw14} also studied the
variable Hardy spaces $H^{p(\cdot)}(\rn)$ with $p(\cdot)$ satisfying some conditions slightly weaker than those used in \cite{ns12}.
As a sequel of \cite{ns12}, Sawano \cite{s13} sharpened the conclusion of the atomic
characterization of $H^{p(\cdot)}(\rn)$ in \cite{ns12}, which was used, in \cite{s13},
to establish the boundedness in $H^{p(\cdot)}(\rn)$ of the fractional integral operator and the commutators
generated by singular integral operators and BMO functions.
After that, Yang et al. \cite{yzn15,zyl14} established equivalent characterizations
of variable Hardy spaces via Riesz transforms and intrinsic square functions.

On the other hand, in recent years, there has been a lot of
attention paid to the study of function spaces, especially Hardy spaces
and BMO spaces, associated with various operators; see, for example,
\cite{adm05,amr08,bckyy13a,bckyy13,dl13,dy05,dy051,hm09,hmm11,jy10,zcjy15}.
Here, let us give a brief overview of this research direction.
First, Auscher et al. \cite{adm05}, and then Duong and Yan
\cite{dy05,dy051}, introduced Hardy and BMO spaces associated with an operator
$L$ whose heat kernel has a pointwise Gaussian upper bound.
Later, Hardy spaces associated with operators which satisfy the weaker conditions,
the so-called Davies-Gaffney type estimates, were treated in \cite{amr08,hlmmy11,hm09,hmm11}.
More precisely, Auscher et al. \cite{amr08} and Hofmann et al. \cite{hm09,hmm11}
treated Hardy spaces associated, respectively, with the Hodge Laplacian on a Riemannian
manifold equipped with a doubling measure, or with a second order divergence form
elliptic operator on $\rn$ with complex coefficients,
in which settings pointwise heat kernel bounds may fail.
Hofmann et al. \cite{hlmmy11} studied Hardy spaces associated with non-negative
self-adjoint operators satisfying the Davies-Gaffney estimates in
the general setting of a metric space with a doubling measure.
Then the weighted Hardy spaces associated with operators were also
considered in \cite{bd12,sy10}.
Recently, by introducing a notion of reinforced off-diagonal
estimates (see Remark \ref{rem-1x}(ii) below),
Bui et al. \cite{bckyy13} studied the weighted Hardy spaces
associated with non-negative self-adjoint operators satisfying
such estimates, which, in some sense, improve those results of \cite{bd12,sy10}
by extending the range of the considered weights.
To study the Hardy spaces associated with differential operators
on more general underlying spaces (for example, the Laplace-Beltrami operator on any Riemannian manifold
with a doubling measure),
Bui et al. \cite{bckyy13a} introduced Musielak-Orlicz-Hardy
spaces associated with operators satisfying reinforced off-diagonal estimates on
balls on a metric space with a doubling measure.
The notion of reinforced off-diagonal estimates on balls
(see Remark \ref{rem-1x}(iii) below)
was first introduced
in \cite{bckyy13a} by combining the ideas of the reinforced off-diagonal estimates
from \cite{bckyy13} and the off-diagonal estimates on balls from \cite{am07}.

Very recently, Yang and Zhuo \cite{yzo15} introduced variable Hardy spaces
associated with operators $L$ on $\rn$, denoted by $H^{p(\cdot)}_L(\rn)$, where
$p(\cdot):\ \rn\to(0,\,1]$
is a variable exponent function satisfying the globally
log-H\"{o}lder continuous condition
and $L$ is a linear operator on $L^2(\rn)$ which generates an
analytic semigroup $\{e^{-tL}\}_{t\geq 0}$ with kernels having pointwise upper bounds.
Moreover,  in \cite{yzo15},
the molecular characterization of $H^{p(\cdot)}_L(\rn)$ was established,
which was further applied to study the boundedness of the fractional integral associated
with $L$ on $H^{p(\cdot)}_L(\rn)$, and the dual space of $H^{p(\cdot)}_L(\rn)$ was also
investigated. Under an additional condition that $L$ is non-negative self-adjoint,
the atomic and several maximal function characterizations of $H_L^{p(\cdot)}(\rn)$ were
established in a recent article \cite{zy15}.

Motivated by \cite{hlmmy11,hm09,hmm11,yzo15}, in this article,
we consider the variable Hardy spaces $\vhp$ associated with a one-to-one operator $L$
of type $\omega$ in $L^2(\rn)$, with $\omega\in[0,\,\pi/2)$,
which has a bounded holomorphic functional calculus and satisfies the
Davies-Gaffney estimates, namely, Assumptions \ref{as-a} and \ref{as-b}
below of this article. We point out that
many operators satisfy these assumptions (see Remark \ref{rem Assu*} below).
Indeed, Assumption \ref{as-b} (the Davies-Gaffney estimates)
is weaker than the reinforced off-diagonal estimates from
\cite{bckyy13} and the reinforced off-diagonal estimates on balls
from \cite{bckyy13a} (see Remark \ref{rem-1x} below).
Under Assumptions \ref{as-a} and \ref{as-b}, we introduce the variable Hardy spaces $\vhp$
(see Definition \ref{def vhp} below). Then we establish their molecular characterizations
via variable tent spaces. By borrowing some ideas from \cite{hm09,jy10},
we further prove that the dual space of $\vhp$ is the BMO-type space
${\rm BMO}_{p(\cdot),\,L^\ast}(\rn)$, where $L^\ast$ denotes the adjoint operator of $L$.
In particular, when $L$ is the second order divergence form elliptic operator
with complex bounded measurable coefficients,
namely, $L:=-\div (A\nabla)$
(see \eqref{eq op} below for its definition),
we obtain the non-tangential maximal function characterization of $\vhp$
and establish the boundedness of the associated fractional integral
and Riesz transform on $\vhp$.

Compared with the function spaces with constant exponents,
a main difficulty appearing in the study on variable function spaces
exists in that the quasi-norm $\|\cdot\|_{\vp}$ has no explicit
and direct expression. Indeed, $\|\cdot\|_{\vp}$ is just the Minkowski functional
of a convex modular ball
$\{f\in\vp:\ \int_\rn |f(x)|^{p(x)}\,dx\le 1\}$
(see \eqref{eq norm} below), which makes many estimates become
very complicated. To overcome this difficulty, in this article, we borrow some ideas
from Sawano \cite{s13}, to be precise, slight variants of \cite[Lemmas 4.1 and 5.2]{s13}
(which are restated as Lemmas \ref{lem key} and \ref{lem 5.2} below).
The role of Lemma \ref{lem key} is to reduce some estimates in terms of
$L^{p(\cdot)}(\rn)$ norms of some series of functions into some estimates
in terms of $L^q(\rn)$ norms of some functions,
while Lemma \ref{lem 5.2} establishes some connection between $\|\cdot\|_{L^{p(\cdot)}(\rn)}$
and $\|\cdot\|_{L^{q(\cdot)}(\rn)}$ of infinite linear combinations of characteristic functions;
both lemmas play crucial roles in proving the main results of this article.
On the other hand, observe that the heat semigroup of the operators
considered in \cite{yzo15} has the pointwise upper bounds, while
the heat semigroup of the operators considered in this article
only satisfies some integral estimates; this difference leads to that the proofs
of main results of this article become more difficult and hence need some
subtler and more careful estimates, compared with those proofs of the corresponding
results in \cite{yzo15}.

This article is organized as follows.

In Section \ref{s2}, we first describe Assumptions \ref{as-a} and \ref{as-b}
imposed on the considered operator $L$ of this article.
Then we recall some notation and notions on the variable Lebesgue
space $\vp$ and give out the definition of the Hardy space $\vhp$ in terms of the square
function of the heat semigroup generated by $L$.

In Section \ref{s3}, we establish the molecular characterization of $\vhp$
(see Theorem \ref{thm 1} below), which is an immediate consequence of Propositions \ref{pro 1}
and \ref{pro 2} below. In particular, Proposition \ref{pro 1} shows
that the molecular Hardy space is a subspace of $\vhp$ and in its proof,
to overcome the difficulty caused by the variable $\|\cdot\|_{L^{p(\cdot)}(\rn)}$,
we need to apply Lemma \ref{lem key},
which is different from the proofs of
corresponding results of $H_L^p(\rn)$ established in \cite{bckyy13a,hm09,hmm11}.
Proposition \ref{pro 2} shows that $\vhp$ is a subspace of the molecular Hardy space and
its proof depends on the atomic decomposition of the variable tent space $T^{p(\cdot)}(\rn)$
from \cite{zyl14} (which is restated as Lemma \ref{lem 2} below) and also on Lemma \ref{lem 5}
which shows that an atom of $T^{p(\cdot)}(\rn)$ is a molecule of $\vhp$
under the projection operator $\pi_{M,\,L}$, with $M\in\nn$, defined in \eqref{eq pi} below.
To show Lemma \ref{lem 5}, we need to make full use of properties of
the Davies-Gaffney estimates from Assumption \ref{as-b},
since the heat semigroup $\{e^{-tL}\}_{t> 0}$ considered in this article has
no pointwise upper bounds, which is essentially different from that of \cite{yzo15}.

In Section \ref{s4}, by borrowing some ideas from \cite{hm09,hmm11,jy10},
we introduce the BMO-type space ${\rm BMO}^M_{p(\cdot),\,L^\ast}(\rn)$ with $M\in\nn$
(see Definition \ref{def bmo} below) and establish the duality between $\vhp$ and
${\rm BMO}^M_{p(\cdot),\,L^\ast}(\rn)$ (see Theorem \ref{thm 2} below).
To prove Theorem \ref{thm 2}, we need to first give out several properties
related to  ${\rm BMO}^M_{p(\cdot),\,L^\ast}(\rn)$
(see Proposition \ref{pro 4}, Remark \ref{rem 4.1}, Lemmas \ref{lem 4.1},
\ref{lem 4.2} and \ref{lem 4.3} below). The essential difficulty arising here is that
the quasi-norm $\|\cdot\|_{\vp}$, in general, has no property of the
translation invariance, namely, for any $z\in\rn$ and
ball $B(x,\,r)\st\rn$ with $x\in\rn$ and $r\in(0,\,\fz)$,
$\|\chi_{B(x,\,r)}\|_{\vp}$ may not be equal to $\|\chi_{B(x+z,\,r)}\|_{\vp}$.
To overcome this difficulty, we make full use of a slight variant of \cite[Lemma 2.6]{zyl14}
(see Lemma \ref{lem 3} below), which is different from the case that
 $p(\cdot)\equiv\,\mathrm{constant}\in(0,\,1]$ as in \cite{hm09,hmm11}.

In Section \ref{s5}, as an example of operators satisfying
Assumptions \ref{as-a} and \ref{as-b}, we consider the second order
divergence form elliptic operator
$L:=-\div (A\nabla)$
with complex bounded measurable coefficients.
In Subsection \ref{s5.1}, by making full use of
the divergence structure of $L$, we obtain the non-tangential maximal function characterization of $\vhp$
(see Theorem \ref{thm 3} below).
The proof of Theorem \ref{thm 3} mainly depends on the extrapolation theorem for $\vp$
(see \cite[Theorem 1.3]{cfmp06} or Lemma \ref{lem extro} below), which
reduces the proof of Theorem \ref{thm 3} to some inequality
in terms of the weighted Lebesgue space with constant exponent in \cite{bckyy13a}.
In Subsection \ref{s5.2}, as an application of the molecular characterization of $\vhp$
in Theorem \ref{thm 1}, we show that the fractional integral $L^{-\az}$
is bounded from $\vhp$ to $H_L^{q(\cdot)}(\rn)$
for $\az\in(0,\,\frac12]$ and $\frac1{p(\cdot)}-\frac1{q(\cdot)}=\frac{2\az}{n}$
(see Theorem \ref{thm 5} below).
The proof of Theorem \ref{thm 5} strongly depends on a slight variant of \cite[Lemma 5.2]{s13},
namely, Lemma \ref{lem 5.2} below. In Subsection \ref{s5.3}, by borrowing some ideas from \cite{bckyy13,jy11,zcjy15},
we prove that the Riesz transform $\nabla L^{-1/2}$
is bounded from $\vhp$ to $H^{p(\cdot)}(\rn)$ (see Theorem \ref{thm 6} below)
via the atomic characterization of $H^{p(\cdot)}(\rn)$ (see \cite[Theorem 1.1]{s13}).

We end this section by making some conventions on notation. Throughout this article,
we denote by $C$ a positive constant which is independent of the main parameters,
but it may vary from line to line.
We also use $C_{(\az, \bz,\ldots)}$ to denote a positive constant depending on
the parameters $\az$, $\bz$, $\dots$.
The \emph{symbol $f\ls g$} means that $f\le Cg$.
If $f\ls g$ and $g\ls f$, then we write $f\sim g$.
For any measurable subset $E$ of $\rn$, we denote by $E^\com$ the \emph{set $\rn\bh E$}
and by $\chi_E$ the characteristic function of $E$.
For any $a\in\mathbb{R}$, the \emph{symbol} $\lfloor a\rfloor$
denotes the largest integer $m$ such that $m\le a$.
Let $\nn:=\{1,\,2,\,\ldots\}$ and $\zz_+:=\nn\cup\{0\}$.
Let $\rnn:=\rn\times(0,\fz)$. For any $\az\in(0,\,\fz)$ and $x\in\rn$, define
\begin{equation}\label{eq bgz}
\bgz_\az(x):=\{(y,\,t)\in\rnn:\ |y-x|<\az t\}.
\end{equation}
If $\az=1$, we simply write $\bgz(x)$ instead of $\bgz_\az(x)$.

For any ball $B:=B(x_B,r_B)\st\rn$ with $x_B\in\rn$ and $r_B\in(0,\,\fz)$, $\az\in(0,\fz)$ and $j\in\nn$,
we let $\az B:=B(x_B,\az r_B)$,
\begin{align}\label{eq ujb}
U_0(B):=B\ \ \ \text{and}\ \ \ U_j(B):=(2^jB)\setminus (2^{j-1}B).
\end{align}
For any $p\in[1,\,\fz]$, $p'$ denotes its conjugate number,
namely, $1/p+1/p'=1$.

For any $r\in(0,\,\fz)$, denote by $L^r_{\loc}(\rn)$ the set of all
\emph{locally $r$-integrable functions} on $\rn$ and, for any measurable
set $E\st\rn$, let $L^r(E)$ be the set of all measurable functions $f$
on $E$ such that $\|f\|_{L^r(E)}:=[\int_E |f(x)|^r\,dx]^{1/r}<\fz.$

\section{Preliminaries\label{s2}}
\hskip\parindent
In this section, we first
describe some basic assumptions on the operator $L$ studied throughout this article.
Then we recall some notation and notions on variable Lebesgue spaces and introduce the variable Hardy spaces $H_{L}^{p(\cdot)}(\rn)$
associated with $L$.

\subsection{Two assumptions on the operator $L$\label{s2.2}}
\hskip\parindent
Before giving the assumptions on the operator $L$ studied in this article,
we first recall some knowledge about
bounded holomorphic functional calculi introduced by McIntosh \cite{m86}
(see also \cite{adm96}).

Let $\omega\in[0,\,\pi)$. The \emph{closed} and \emph{open $\omega$ sectors},
$S_{\oz}$ and $S_{\oz}^0$, are defined, respectively, by setting
\begin{equation*}
S_{\oz}:=\{z\in\mathbb{C}:\ |\arg z|\le\oz\}\cup\{0\}
\quad\mathrm{and}\quad
S^0_{\oz}:=\{z\in\mathbb{C}\setminus\{0\}:\ |\arg z|<\oz\}.
\end{equation*}
A closed and densely defined operator $T$ in $L^2(\rn)$ is said to be of \emph{type $\oz$} if
\begin{enumerate}
\item[(i)] the spectrum $\sigma(T)$ of $T$ is contained in $S_\oz$.

\item[(ii)] for any $\tz\in(\oz,\,\pi)$, there exists a positive constant $C_{(\tz)}$
such that, for any $z\in \mathbb{C}\setminus S_\tz$,
$$|z|\|(zI-T)^{-1}\|_{\mathcal{L}(L^2(\rn))}\le C_{(\tz)},$$
here and hereafter, $\mathcal{L}(L^2(\rn))$ denotes the set of all continuous
linear operators from $L^2(\rn)$ to itself and,
for any $S\in\mathcal{L}(L^2(\rn))$,
the operator norm of $S$ is denoted by $\|S\|_{\mathcal{L}(L^2(\rn))}$.
\end{enumerate}

For any $\mu\in(0,\,\pi)$, define
$$H_\fz(S_\mu^0):=\{f:\ S_\mu^0\to\mathbb{C}\ \text{is holomorphic and}\ \|f\|_{L^\fz(S_\mu^0)}<\fz\}$$
and
\begin{align*}
\Psi(S_\mu^0):=\lf\{f\in H_\fz(S_\mu^0):\ \exists\,\az,\,C\in(0,\,\fz)\ \text{such that}\
|f(z)|\le \frac{C|z|^\az}{1+|z|^{2\az}},\ \forall z\in S_\mu^0\r\}.
\end{align*}

For any $\oz\in[0,\,\pi)$, let $T$ be a one-to-one operator of type $\oz$ in $L^2(\rn)$.
For any $\psi\in\Psi(S_\mu^0)$ with $\mu\in(\oz,\,\pi)$,
the operator $\psi(T)\in\mathcal{L}(L^2(\rn))$ is defined by setting
\begin{equation}\label{eq function}
\psi(T):=\int_\gamma\psi(\xi)(\xi I-T)^{-1}\,d\xi,
\end{equation}
where $\gamma:=\{re^{i\nu}:\ r\in(0,\,\fz)\}\cup\{re^{-i\nu}:\ r\in(0,\,\fz)\}$,
$\nu\in(\omega,\,\mu)$, is a curve consisting of two rays parameterized anti-clockwise.
It is easy to see that the integral in \eqref{eq function} is absolutely
convergent in $L^2(\rn)$ and the definition of $\psi(T)$ is independent of
the choice of $\nu\in(\oz,\,\mu)$ (see \cite[Lecture 2]{adm96}).
It is well known that the above holomorphic functional calculus defined on
$\Psi(S_\mu^0)$ can be extended to $H_\fz(S_\mu^0)$ by a limiting procedure
(see \cite{m86}).
Let $0\le\omega<\mu<\pi$. Recall that the operator $T$ is said to have a
\emph{bounded holomorphic functional calculus} in $L^2(\rn)$ if
there exists a positive constant $C_{(\omega,\mu)}$, depending on $\omega$ and $\mu$,
such that, for any $\psi\in H_\fz(S_\mu^0)$,
\begin{equation}\label{e2.1x}
\|\psi(T)\|_{\mathcal{L}(L^2(\rn))}\le C_{(\omega,\mu)}\|\psi\|_{L^\fz(S_\mu^0)}.
\end{equation}
By \cite[Theorem F]{adm96}, we know that, if \eqref{e2.1x} holds true for some
$\mu\in (\omega,\pi)$, then it also holds true for all $\mu\in (\omega,\pi)$.

\begin{remark}\label{r-2.0}
Let $T$ be a one-to-one operator of type $\omega$ in $L^2(\rn)$ with $\oz\in[0,\,\pi/2)$.
Then it follows from \cite[Theorem 1.45]{ou05}
that $T$ generates a bounded holomorphic semigroup $\{e^{-zT}\}_{z\in S^0_{\pi/2-\oz}}$
on the open sector $S^0_{\pi/2-\oz}$.
\end{remark}

We now make the following two assumptions on the operator $L$, which are used
through the whole article.

\begin{assumption}\label{as-a}
$L$ is a one-to-one operator of type $\omega$ in $L^2(\rn)$, with $\oz\in[0,\,\pi/2)$,
and has a bounded holomorphic functional calculus.
\end{assumption}

\begin{assumption}\label{as-b}
The semigroup $\{e^{-tL}\}_{t>0}$ generated by $L$
satisfies the \emph{Davies-Gaffney estimates},
namely, there exist positive constants $C$ and $c$ such that, for any closed subsets
$E$ and $F$ of $\rn$ and $f\in L^2(\rn)$ with $\supp f\st E$,
\begin{align}\label{eq-dg}
\lf\|e^{-tL}(f)\r\|_{L^2(F)}\le Ce^{-c\frac{[\dist(E,\,F)]^2}{t}}\|f\|_{L^2(E)}.
\end{align}
Here and hereafter, for any subsets $E$ and $F$ of $\rn$,
$$\dist(E,\,F):=\inf\{|x-y|:\ x\in E,\,y\in F\}.$$
\end{assumption}

\begin{remark}\label{rem-1x}
\begin{enumerate}
\item[(i)] The notion of the Davies-Gaffney estimates (or the so-called $L^2$ off-diagonal estimates)
of the semigroup $\{e^{-tL}\}_{t>0}$ was first introduced by Gaffney
\cite{gaf59} and Davies \cite{dav95}, which serves as good substitutes
of the Gaussian upper bound of the associated heat kernel;
see also \cite{am07} and related references therein.

\item[(ii)] Let $L$ be a non-negative self-adjoint operator on $L^2(\rn)$
and $\{e^{-tL}\}_{t>0}$ be the analytic semigroup generated by $L$.
The reinforced off-diagonal estimates introduced in \cite{bckyy13}
is that there exists a constant $p_L\in[1,\,2)$ such that, for all
$p_L<p\le q<p_L'$,
$\{e^{-tL}\}_{t>0}$ satisfies $L^p-L^q$ \emph{off-diagonal estimates},
denoted by $e^{-tL}\in\mathcal{F}(L^p-L^q)$, namely,
there exist positive constants $C$ and $c$ such that, for all $t\in(0,\,\fz)$,
any closed subsets $E$ and $F$ of $\rn$ and $f\in L^p(\rn)$ with $\supp f\st E$,
\begin{align*}
\lf\|e^{-tL}(f)\r\|_{L^q(F)}\le C t^{-\frac{n}2(\frac1p-\frac1q)}
e^{-c\frac{[\dist(E,\,F)]^2}{t}}\|f\|_{L^p(E)},
\end{align*}
which is obviously stronger than Assumption \ref{as-b} in this article.

\item[(iii)] Let $(\cx,\,d)$ be a metric space with a doubling measure $\mu$
and $L$ be a one-to-one operator of type $\omega$ in $L^2(\cx)$ with $\omega\in(0,\,\pi/2)$.
The \emph{reinforced off-diagonal estimates on balls} introduced in \cite{bckyy13a}
is that there exist constants $p_L\in[1,\,2)$ and $q_L\in(2,\,\fz]$ such that,
for all $p_L<p\le q<q_L$, $\{e^{-tL}\}_{t>0}$ satisfies $L^p-L^q$
\emph{off-diagonal estimates on balls}, denoted by $e^{-tL}\in\mathcal{O}(L^p-L^q)$,
namely,
there exist constants $\theta_1,\,\theta_2 \in[0,\fz)$ and $C,\,c\in(0,\fz)$ such that,
for any $t\in(0,\fz)$, any ball $B:=B(x_B,r_B)\subset\cx$ with $x_B\in\cx$ and $r_B\in (0,\fz)$,
and any locally $p$-integrable function $f$ on $\cx$,
\begin{align*}
\lf\{\frac{1}{\mu(B)}\int_B|e^{-tL}\lf(\chi_Bf\r)(x)|^q\,d\mu(x)\r\}^{\frac1q}\le C
\lf[\Upsilon\lf(\frac{r_B}{t^{1/2}}\r)\r]^{\theta_2}\lf\{\frac{1}{\mu(B)}\int_B
|f(x)|^p\,d\mu(x)\r\}^{\frac1p}
\end{align*}
and, for any $j\in\nn\cap [3,\,\fz)$,
\begin{align*}
&\lf\{\frac{1}{\mu(2^jB)}\int_{U_j(B)}|T_t\lf(\chi_{B}f\r)(x)|^q\,d\mu(x)\r\}^{\frac1q}\\
&\hs\le C
2^{j\theta_1}\lf[\Upsilon\lf(\frac{2^jr_B}{t^{1/2}}\r)\r]^{\theta_2}
e^{-c\frac{(2^jr_B)^{2}}{t}}\lf\{\frac{1}{\mu(B)}\int_B
|f(x)|^p\,d\mu(x)\r\}^{\frac1p}
\end{align*}
and
\begin{align*}
&\lf\{\frac{1}{\mu(B)}\int_B |T_t(\chi_{U_j(B)}f)(x)|^q\,d\mu(x)\r\}^{\frac1q}\\
&\hs\le C 2^{j\theta_1}\lf[\Upsilon\lf(\frac{2^jr_B}{t^{1/2}}\r)\r]^{\theta_2}
e^{-c\frac{(2^jr_B)^{2}}{t}}\lf\{\frac{1}{\mu(2^jB)}\int_{U_j(B)}
|f(x)|^p\,d\mu(x)\r\}^{\frac1p},
\end{align*}
where $U_j(B)$ is as in \eqref{eq ujb} and, for all $s\in(0,\fz)$,
$\Upsilon(s):=\max\lf\{s,\frac{1}{s}\r\}$. The notion of off-diagonal
estimates on balls was first introduced by Auscher and Martell \cite{am07}
in the setting of a metric space with a doubling measure,
which was operational for proving weighted estimates in \cite{am06}.
From \cite[Proposition 3.2]{am07}, we deduce that, for any $1\le p\le q\le\fz$,
$e^{-tL}\in\mathcal{O}(L^p-L^q)$ is equivalent to $e^{-tL}\in\mathcal{F}(L^p-L^q)$
in the setting of the classical Euclidean space.
By this and (ii) of this remark, we know that the reinforced off-diagonal estimates
on balls introduced in \cite{bckyy13a} are stronger than Assumption \ref{as-b}
of this article.
\end{enumerate}
\end{remark}

\begin{remark}\label{rem Assu}
Let $L$ be an operator satisfying Assumptions \ref{as-a} and \ref{as-b}.
\begin{enumerate}
\item[(i)] By Remark \ref{r-2.0}, we know that
the semigroup $e^{-zL}$ is holomorphic in $S^0_{\pi/2-\omega}$.
From this, Assumption \ref{as-b} and an argument similar to that
used in the proof of \cite[Proposition 3.1]{hlmmy11},
we deduce that, for any $k\in\zz_+$, the family $\{(tL)^ke^{-tL}\}_{t>0}$ of operators
satisfies the Davies-Gaffney estimates \eqref{eq-dg}. In particular,
for any $k\in\zz_+$ and $t\in(0,\,\fz)$,
the operator $(tL)^ke^{-tL}$ is bounded on $L^2(\rn)$.

\item[(ii)] Let $L^\ast$ be the adjoint operator of $L$ in $L^2(\rn)$.
Then, by \cite[Theorems 5.30 and 6.22 of Chapter Three]{ka95}, we know that $L^\ast$ is
also a one-to-one operator of type $\oz$ in $L^2(\rn)$.
From \cite[Lemma 2.6.2]{h06}, it follows that, for any $k\in\mathbb{Z}_+$ and $t\in(0,\,\fz)$,
$[(tL)^k e^{-tL}]^\ast=(tL^\ast)^k e^{-tL^\ast}$.
By this, (i) of this remark and an argument of duality, we find that, for any $k\in\mathbb{Z}_+$,
the family $\{(tL^\ast)^ke^{-tL^\ast}\}_{t>0}$ of operators also
satisfies \eqref{eq-dg}.

\item[(iii)] By \cite[Lemma 2.3]{hm03}, we know that there exist
positive constants $C$ and $c$ such that,
for any $t,\,s\in (0,\,\fz)$, any closed subsets $E$ and $F$ of $\rn$
and $f\in L^2(\rn)$ with $\supp f\st E$,
\begin{equation}\label{eq tt}
\lf\|e^{-sL}e^{-tL}(f)\r\|_{L^2(F)}\le C e^{-c\frac{[\dist(E,\,F)]^2}{\max\{s,\,t\}}}
\|f\|_{L^2(E)}.
\end{equation}

\item[(iv)] We point out that the assumption that $L$ is one-to-one is necessary for the
bounded holomorphic functional calculus on $L^2(\rn)$ (see \cite{m86,adm96}).
By \cite[Theorem 2.3]{cdmy96}, we further know
that, if $T$ is a one-to-one operator of type $\oz$
in $L^2(\rn)$, then $T$ has dense domain and dense range.
\end{enumerate}
\end{remark}

\begin{remark}\label{rem Assu*}
Examples of operators, which satisfy Assumptions \ref{as-a} and \ref{as-b}, include:
\begin{enumerate}
\item[(i)] the second order divergence form elliptic operator
with complex bounded coefficients as in \cite{hm09,hmm11}.
Recall that a matrix $A(x):=(A_{ij}(x))_{i,j=1}^n$ of complex-valued measurable functions on $\rn$
is said to satisfy the \emph{elliptic condition} if there exist positive constants
$\lz\le\blz$ such that, for almost every $x\in\rn$ and all $\xi,\,\eta\in\cc^n$,
$$\lz|\xi|^2\le\Re\langle A(x)\xi,\,\xi\rangle\ \ \ \text{and}\ \ \
|\langle A(x)\xi,\,\eta\rangle|\le\blz|\xi||\eta|,$$
where $\langle\cdot,\,\cdot\rangle$ denotes the \emph{inner product} in $\cc^n$ and
$\Re\xi$ denotes the \emph{real part} of $\xi$.
For such a matrix $A(x)$, the associated \emph{second order divergence form elliptic operator $L$}
is defined by setting, for any $f\in D(L)$,
\begin{equation}\label{eq op}
Lf:=-\div(A\nabla f),
\end{equation}
which is interpreted in the weak sense via a sesquilinear form.
Here and hereafter, $D(L)$ denotes the domain of $L$.
It is well known that there exists a positive constant $\oz\in[0,\,\pi/2)$ such
that the operator $L$ is one-to-one of type $\oz$ in $L^2(\rn)$ and $L$
has a bounded holomorphic functional calculus in $L^2(\rn)$
(see, for example, \cite{at98,au07,hmm11}).
Hence, $L$ satisfies Assumption \ref{as-a}.
Let $k\in\zz_+$. By \cite[Proposition 5.7(a)]{am07} (see also \cite[Corollary 3.6]{au07}),
we find that there exist positive constants $p_-(L)$ and $p_+(L)$ such that,
for all $p_-(L)<p\le q<p_+(L)$,
$(tL)^ke^{-tL}\in\mathcal{F}(L^p-L^q)$,
namely, there exist positive constants $C$ and $c$ such that, for all $t\in(0,\,\fz)$,
any closed subsets $E,\,F$ of $\rn$ and $f\in L^p(\rn)$ with $\supp f\st E$,
\begin{align}\label{eq-od}
\lf\|(tL)^k e^{-tL}(f)\r\|_{L^q(F)}
\le C t^{-\frac{n}{2}(\frac1p-\frac1q)}e^{-c\frac{[\dist(E,\,F)]^2}{t}}
\lf\|f\r\|_{L^p(E)}.
\end{align}
Moreover, by \cite[Section 3.4]{au07}, we know that
\begin{equation}\label{eq bound1}
p_-(L)\in\lf[1,\,\frac{2n}{n+2}\r)\ \ \text{for}\ n\geq 3;\ \ \ \ p_-(L)=1\ \
\text{for}\ n\in\{1,\,2\}
\end{equation}
and
\begin{equation}\label{eq bound2}
\ p_+(L)\in\lf(\frac{2n}{n-2},\,\fz\r]\ \ \text{for}\ n\geq 3;\ \ \ \ p_+(L)=\fz\ \
\text{for}\ n\in\{1,\,2\}.
\end{equation}
This implies that $L$ satisfies Assumption \ref{as-b}.
The following diagram illustrates the parameters involved in the $L^p-L^q$
off-diagonal estimates satisfied by $(tL)^ke^{-tL}$.
\begin{figure}[H]
\begin{center}
\scalebox{0.55}[0.55]{\includegraphics*[72,235][481,558]{pems-16009.eps}}
\end{center}
\end{figure}
\noindent
Here, the bottom-right corner of the shaded triangle is $(1/{p_-(L)},\,1/{p_+(L)})$,
with $p_-(L)\in[1,\,2)$ and $p_+(L)\in(2,\,\fz]$,
and, for every pair $(1/p,\,1/q)$ in the shaded region, $(tL)^ke^{-tL}\in \mathcal{F}(L^p-L^q)$.

\item[(ii)] the one-to-one non-negative self-adjoint operators $L$
having the \emph{Gaussian upper bounds}, namely,
there exist positive constants $C$ and $c$ such that, for any $t\in(0,\,\fz)$ and $x,\,y\in\rn$,
\begin{equation}\label{eq-gauss}
|p_t(x,\,y)|\le\frac{C}{t^{n/2}}\exp\lf(-c\frac{|x-y|^2}{t}\r),
\end{equation}
where $p_t$ denotes the kernel of $e^{-tL}$.
Indeed, every non-negative self-adjoint operator $L$ is an
operator of type $0$ and has a bounded holomorphic functional calculus.
Thus, $L$ satisfies Assumption \ref{as-a}.
Moreover, by \eqref{eq-gauss} and \cite[Proposition 2.2]{am07}, we know that,
for any $1\le p\le q\le\fz$, $e^{-tL}\in\mathcal{O}(L^p-L^q)$.
By the fact that $e^{-tL}\in\mathcal{O}(L^p-L^q)$ is equivalent to
$e^{-tL}\in\mathcal{F}(L^p-L^q)$ (see \cite[Proposition 3.2]{am07}),
we know that $L$ satisfies Assumption \ref{as-b}.

\item[(iii)] the \emph{Schr\"{o}dinger operator} $-\Delta+V$ on $\rn$ with the non-negative potential
$V\in L^1_{\rm loc}(\rn)$ and not identically zero
(see, for example, \cite{hlmmy11,jy11,yy14,sy10}
and related references therein).
Indeed, by \cite[Chapter 8]{hlmmy11},
we know that $-\Delta+V$ is
a particular case of (ii) of this remark.
\end{enumerate}
\end{remark}

\subsection{Variable Hardy spaces $H_L^{p(\cdot)}(\rn)$}
\hskip\parindent
In this subsection, we introduce the variable Hardy space $H_L^{p(\cdot)}(\rn)$.
We begin with recalling some notation and notions on variable Lebesgue spaces.

Let $\cp(\rn)$ be the set of all the measurable functions $p(\cdot):\ \rn\to (0,\,\fz)$
satisfying
\begin{equation}\label{eq var}
p_-:=\essinf_{x\in\rn}p(x)>0\ \ \text{and}\ \ p_+:=\esup_{x\in\rn}p(x)<\fz.
\end{equation}
A function $p(\cdot)\in\cp(\rn)$ is called a \emph{variable exponent function on $\rn$}.
For any $p(\cdot)\in\cp(\rn)$ with $p_-\in(1,\,\fz)$,
we define $p'(\cdot)\in\cp(\rn)$ by
\begin{align*}
\frac{1}{p(x)}+\frac{1}{p'(x)}=1\ \ \ \text{for all}\ \ x\in\rn.
\end{align*}
The function $p'$ is called the \emph{dual variable exponent} of $p$.

For any $p(\cdot)\in\cp(\rn)$,
the \emph{variable Lebesgue space $\vp$} is defined
to be the set of all measurable functions $f$ satisfying \eqref{eq-vl},
equipped with the \emph{Luxemburg} (or known as the \emph{Luxemburg-Nakano}) {(quasi-)norm}
\begin{equation}\label{eq norm}
\|f\|_{\vp}:=\inf\lf\{\lz\in(0,\,\fz):\ \int_\rn
\lf[\frac{|f(x)|}{\lz}\r]^{p(x)}\,dx\le 1\r\}.
\end{equation}
For more properties of the variable Lebesgue spaces, we refer the
reader to \cite{cf13,dhhr11}.

\begin{remark}\label{rem 1}
Let $p(\cdot)\in\cp(\rn)$.
\begin{enumerate}
\item[(i)] For any $\lz\in\mathbb{C}$ and $f\in\vp$, $\|\lz f\|_{\vp}=|\lz|\|f\|_{\vp}$.
In particular, if $p_-\in[1,\,\fz)$, then $\vp$ is a Banach space (see \cite[Theorem 3.2.7]{dhhr11})
and, for any $f,\,g\in\vp$,
$\|f+g\|_{\vp}\le \|f\|_{\vp}+\|g\|_{\vp}.$

\item[(ii)] For any non-trivial function $f\in\vp$, it holds true that
\begin{equation*}
\int_\rn \lf[\frac{|f(x)|}{\|f\|_{\vp}}\r]^{p(x)}\,dx=1
\end{equation*}
(see, for example, \cite[Proposition 2.21]{cf13}).

\item[(iii)] By \eqref{eq norm}, it is easy to see that,
for any $s\in(0,\,\fz)$ and $f\in\vp$,
\begin{equation*}
\||f|^s\|_{\vp}=\|f\|_{L^{sp(\cdot)}(\rn)}^s
\end{equation*}
(see also \cite[Lemma 2.3]{cw14}).
\end{enumerate}
\end{remark}

Recall that a measurable function $g\in\cp(\rn)$ is said to be
\emph{globally $\log$-H\"{o}lder continuous},
denoted by $g\in C^{\log}(\rn)$, if there exist constants $C_1,\,C_2\in(0,\,\fz)$
and $g_\fz\in\mathbb{R}$ such that, for any $x,\,y\in\rn$,
\begin{equation*}
|g(x)-g(y)|\le \frac{C_1}{\log(e+1/|x-y|)}
\end{equation*}
and
\begin{equation*}
|g(x)-g_\fz|\le \frac{C_2}{\log(e+|x|)}.
\end{equation*}

Also, recall that the \emph{Hardy-Littlewood maximal operator} $\cm$ is defined by setting,
for all $f\in L^1_{\loc}(\rn)$ and $x\in\rn$,
\begin{equation}\label{2.10x}
\cm(f)(x):=\sup_{B\ni x}\frac{1}{|B|}\int_B|f(y)|\,dy,
\end{equation}
where the supremum is taken over all balls $B$ of $\rn$ containing $x$.

\begin{remark}
Let $p(\cdot)\in C^{\log}(\rn)$ and $1<p_-\le p_+<\fz$. Then there exists a positive
constant $C$ such that, for any $f\in\vp$, $\|\cm(f)\|_{\vp}\le C\|f\|_{\vp}$
(see, for example, \cite[Theorem 4.3.8]{dhhr11}).
\end{remark}

The following Fefferman-Stein vector-valued inequality of $\cm$ on $\vp$ was
proved in \cite[Corollary 2.1]{cfmp06}.
\begin{lemma}[\cite{cfmp06}]\label{lem fs}
Let $q\in(1,\,\fz)$ and $p(\cdot)\in C^{\log}(\rn)$ with $p_-\in(1,\,\fz)$.
Then there exists a positive constant $C$ such that, for any sequence $\{f_j\}_{j\in\nn}$
of measurable functions,
\begin{align*}
\lf\|\lf\{\sum_{j=1}^\fz[\cm(f_j)]^q\r\}^{\frac1q}\r\|_{\vp}
\le C\lf\|\lf(\sum_{j=1}^\fz|f_j|^q\r)^{\frac1q}\r\|_{\vp}.
\end{align*}
\end{lemma}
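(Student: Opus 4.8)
The statement to prove is the Fefferman--Stein vector-valued inequality (Lemma \ref{lem fs}), so the plan is to explain how it follows from the scalar boundedness of $\cm$ on variable Lebesgue spaces together with a duality/extrapolation argument.

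\medskip

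\textbf{The plan.} The standard route is via the off-diagonal extrapolation theorem for $\vp$: it suffices to prove the weighted $\ell^q$-valued inequality
\begin{equation*}
\lf\|\lf\{\sum_{j}[\cm(f_j)]^q\r\}^{1/q}\r\|_{L^{p_0}(w)}\le C\lf\|\lf(\sum_{j}|f_j|^q\r)^{1/q}\r\|_{L^{p_0}(w)}
\end{equation*}
for one fixed $p_0\in(1,\,\fz)$ and all $w$ in the Muckenhoupt class $A_{p_0}$, and then invoke the extrapolation principle (the version for variable exponents, e.g.\ \cite[Theorem 1.3]{cfmp06} / Lemma \ref{lem extro} below) to transfer it to $\vp$ for every $p(\cdot)\in C^{\log}(\rn)$ with $p_-\in(1,\,\fz)$. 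The weighted $\ell^q$-valued bound for $\cm$ is classical (Andersen--John), obtained by interpolating or by a direct Rubio de Francia argument. A cleaner alternative, when $q\le p_-$, is to combine the scalar boundedness $\|\cm f\|_{\vp}\ls\|f\|_{\vp}$ (valid under the hypotheses, by \cite[Theorem 4.3.8]{dhhr11}) with a duality argument in the space $L^{(p(\cdot)/q)'}(\rn)$, which is a genuine Banach function space since $p_-/q>1$; one writes, via Remark \ref{rem 1}(iii),
\begin{equation*}
\lf\|\lf\{\sum_j[\cm(f_j)]^q\r\}^{1/q}\r\|_{\vp}^q=\lf\|\sum_j[\cm(f_j)]^q\r\|_{L^{p(\cdot)/q}(\rn)}
=\sup_{g}\int_{\rn}\sum_j[\cm(f_j)(x)]^q\,g(x)\,dx,
\end{equation*}
where the supremum runs over nonnegative $g$ with $\|g\|_{L^{(p(\cdot)/q)'}(\rn)}\le1$, and then controls each $\int[\cm(f_j)]^q g$ by $\int|f_j|^q\cm g$ using the Fefferman--Stein inequality $\int(\cm f)^q g\ls\int|f|^q\cm g$ (valid for $q>1$). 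Summing in $j$, applying H\"older in the variable pair $(p(\cdot)/q,\,(p(\cdot)/q)')$, and finally using boundedness of $\cm$ on $L^{(p(\cdot)/q)'}(\rn)$ (whose lower exponent $(p_+/q)'$ exceeds $1$, and which inherits the $C^{\log}$ condition from $p(\cdot)$) closes the chain.

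\medskip

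\textbf{Steps, in order.} First, reduce to the case $q$ close to but above the relevant threshold, or simply quote the extrapolation theorem so that the whole statement reduces to the weighted scalar-to-vector inequality for $\cm$; cite the $A_{p_0}$ version. Second, if instead pursuing the direct duality proof, verify that $p(\cdot)/q$ and its dual exponent lie in $C^{\log}(\rn)$ with lower exponents strictly above $1$ (here the hypothesis $p_-\in(1,\,\fz)$ is essential, together with picking the exponent $q$ suitably, or rescaling $p(\cdot)$). Third, apply the scalar Fefferman--Stein pointwise-dual inequality $\int(\cm f_j)^q g\,dx\le C\int|f_j|^q\,\cm g\,dx$ term by term. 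Fourth, sum in $j$, apply the generalized H\"older inequality for variable Lebesgue spaces, and invoke $\|\cm g\|_{L^{(p(\cdot)/q)'}(\rn)}\ls\|g\|_{L^{(p(\cdot)/q)'}(\rn)}$. Fifth, undo the $q$-th power via Remark \ref{rem 1}(iii) to recover $\|(\sum_j|f_j|^q)^{1/q}\|_{\vp}$ on the right-hand side.

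\medskip

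\textbf{Main obstacle.} The delicate point is the exponent bookkeeping: the naive duality argument only works directly when $q<p_-$, because only then is $p(\cdot)/q$ bounded below by something exceeding $1$. For general $q\in(1,\,\fz)$ one must either first replace $p(\cdot)$ by $s\,p(\cdot)$ for a small $s$ (using the homogeneity $\||f|^{1/s}\|_{L^{sp(\cdot)}}=\|f\|_{\vp}^{1/s}$ to reduce the general case to the case $q$ below the lower exponent) or else bypass the issue entirely by appealing to the extrapolation theorem, which is why the extrapolation route is the safest. A secondary technical matter is confirming that $(p(\cdot)/q)'$ remains log-H\"older continuous --- this is standard since taking reciprocals and the operations $t\mapsto t/q$, $t\mapsto t/(t-1)$ preserve the $C^{\log}$ class on the relevant range --- and that $\cm$ is therefore bounded on the associate space. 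Once these exponent conditions are in place, every inequality used is a cited classical fact, so no genuinely new estimate is needed.
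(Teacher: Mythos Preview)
The paper does not prove this lemma at all; it is simply quoted as \cite[Corollary 2.1]{cfmp06} with no argument supplied. Your sketch via extrapolation is in fact exactly the method used in that reference (the weighted vector-valued inequality for $\cm$ plus the Rubio de Francia extrapolation into $L^{p(\cdot)}$), so your approach matches the original proof, but for the purposes of this paper no proof is required --- a citation suffices.
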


Assume that the operator $L$ satisfies Assumptions \ref{as-a} and \ref{as-b}. For any $k\in\nn$,
the \emph{square function $S_{L,\,k}$ associated with $L$} is defined by setting,
for any $f\in L^2(\rn)$ and $x\in\rn$,
\begin{equation}\label{eq-1*}
S_{L,\,k}(f)(x):=\lf[\iint_{\bgz(x)}\lf|(t^2L)^ke^{-t^2L}(f)(y)\r|^2\dydt\r]^{1/2},
\end{equation}
where $\bgz(x)$ is as in \eqref{eq bgz} with $\az=1$.
In particular, when $k=1$, we write $S_L$ instead of $S_{L,\,k}$.
We notice that, for any $k\in\nn$, $S_{L,\,k}$ is bounded on $L^2(\rn)$.
Indeed, by the Fubini theorem, we know that, for any $f\in L^2(\rn)$,
\begin{align}\label{eq-1}
\int_\rn|S_{L,\,k}(f)(x)|^2\,dx
&=\int_\rn\int_0^\fz\int_{|y-x|<t}\lf|(t^2L)^ke^{-t^2L}(f)(y)\r|^2\,\frac{dy\,dt}{t^{n+1}}\,dx\noz\\
&=\int_\rn\int_0^\fz\lf|(t^2L)^ke^{-t^2L}(f)(y)\r|^2\,\frac{dt}{t}\,dy
\ls\|f\|_{L^2(\rn)}^2,
\end{align}
where the last step in \eqref{eq-1} is from \cite[Theorem F]{adm96}
(see also \cite[(4.1)]{hlmmy11}).

We now introduce the variable Hardy spaces associated with the operator $L$.
\begin{definition}\label{def vhp}
Let $p(\cdot)\in \cp(\rn)$ satisfy $p_+\in(0,\,1]$
and $L$ be an operator satisfying Assumptions \ref{as-a} and \ref{as-b}.
The \emph{variable Hardy space} $H_{L}^{p(\cdot)}(\rn)$ is defined as the
completion of the space
\begin{align*}
\lf\{f\in L^2(\rn):\ \|S_L(f)\|_{\vp}<\fz\r\}
\end{align*}
with respect to the \emph{quasi-norm}
\begin{align*}
\|f\|_{H_L^{p(\cdot)}(\rn)}:=\|S_L(f)\|_{\vp}=\inf\lf\{\lz\in(0,\,\fz):\
\int_\rn \lf[\frac{S_L(f)(x)}{\lz}\r]^{p(x)}\,dx\le 1\r\}.
\end{align*}
\end{definition}

\begin{remark}\label{rem vhp}
\begin{enumerate}
\item[(i)] In particular, when $p(\cdot)\equiv $ constant $\in(0,\,1]$,
$\vhp$ was studied in \cite{dl13,bl11} as a special case.
We refer the reader to \cite{hlmmy11,jy11,yy14}
for more progresses on Hardy spaces associated with operators satisfying
the Davies-Gaffney estimates.

\item[(ii)] If $L$ is a non-negative self-adjoint operator having the Gaussian upper bounds
as in Remark \ref{rem Assu*}(ii), the variable Hardy space $\vhp$ was studied in \cite{yzo15, zy15}.
Moreover, when $L:=-\Delta$ is the Laplace operator on $\rn$,
by \cite[Theorem 5.3]{yzo15}, we conclude that, if $p(\cdot)\in C^{\log}(\rn)$
with $\frac{n}{n+1}<p_-\le p_+\le 1$ and $\frac2{p_-}-\frac1{p_+}<\frac{n+1}{n}$,
then $H^{p(\cdot)}_L(\rn)$ and $H^{p(\cdot)}(\rn)$ coincide with equivalent quasi-norms,
where $H^{p(\cdot)}(\rn)$ stands for the \emph{variable Hardy space}
(see also Definition \ref{def hardy} below).

\item[(iii)] Let $\vz:\ \rn\times[0,\fz)\to[0,\fz)$ be a growth function as in \cite{bckyy13a}
and $L$ be an operator satisfying the reinforced off-diagonal estimates on balls
as in Remark \ref{rem-1x}(iii).
Then Bui et al. \cite{bckyy13a} introduced the Musielak-Orlicz-Hardy space $H_{\varphi,L}(\rn)$
associated with the operator $L$ via the Lusin area function.
Recall that the Musielak-Orlicz space $L^{\vz}(\rn)$ is defined to be the set
of all measurable functions $f$ on $\rn$ such that
$$\|f\|_{L^{\vz}(\rn)}:=\inf\lf\{\lz\in(0,\fz):\
\int_\rn\vz(x,|f(x)|/\lz)\,dx\le1\r\}<\fz.$$
Observe that, if
\begin{equation}\label{vz}
\vz(x,t):=t^{p(x)}\quad {\rm for\ all}\quad x\in\rn\quad{\rm and}\quad t\in[0,\fz),
\end{equation}
then $L^{\vz}(\rn)=L^{p(\cdot)}(\rn)$. However, a general
Musielak-Orlicz function $\vz$ satisfying all the assumptions
in \cite{bckyy13a} may not have the form
as in \eqref{vz} (see \cite{ky14}). On the other hand, it was proved in
\cite[Remark 2.23(iii)]{yyz14} that there exists a variable exponent function
$p(\cdot)\in C^{\log}(\rn)$,
but $t^{p(\cdot)}$ is not a uniformly Muckenhoupt weight
which was required in \cite{bckyy13a}.
Thus, Musielak-Orlicz-Hardy spaces associated with operators in \cite{bckyy13a}
and variable Hardy spaces associated with operators in this article
do not cover each other.
\end{enumerate}
\end{remark}

\section{Molecular characterization of $H_L^{p(\cdot)}(\rn)$\label{s3}}
\hskip\parindent
In this section, we first recall some properties of variable tent spaces
from \cite{yzo15,zyl14}. Then we establish the molecular characterization of
$\vhp$.

\subsection{Variable tent spaces}
\hskip\parindent
For any measurable function $f$ on $\rnn$
and $x\in\rn$, define
\begin{equation*}
A(f)(x):=\lf[\iint_{\bgz(x)}|f(y,\,t)|^2\dydt\r]^{1/2},
\end{equation*}
where $\bgz(x)$ is as in \eqref{eq bgz} with $\az=1$.
For any $q\in(0,\,\fz)$, the \emph{tent space $T^q(\rnn)$} is defined to be
the space of all measurable functions $f$ such that
$\|f\|_{T^q(\rnn)}:=\|A(f)\|_{L^q(\rn)}<\fz$,
which was first introduced by Coifmann et al. in \cite{cms85}.

The following lemma is just \cite[Theorem 2]{cms85}.
\begin{lemma}[\cite{cms85}]\label{lem 0}
Let $p\in(1,\,\fz)$. Then, for any $f\in T^p(\rnn)$ and $g\in T^{p'}(\rnn)$,
the pairing
\begin{align*}
\langle f,\,g\rangle:=\iint_{\rnn} f(x,\,t)\ov{g(x,\,t)}\,\frac{dx\,dt}{t}
\end{align*}
realizes $T^{p'}(\rnn)$ as the dual of $T^p(\rnn)$, up to equivalent norms,
where $1/p+1/p'=1$.
\end{lemma}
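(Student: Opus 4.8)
The plan is to prove the two continuous inclusions $T^{p'}(\rnn)\hookrightarrow(T^p(\rnn))^\ast$ and $(T^p(\rnn))^\ast\hookrightarrow T^{p'}(\rnn)$ separately; the first is elementary and the second carries the real content. For the first, I would begin from the elementary Fubini identity that, for some dimensional constant $c_n\in(0,\fz)$ and every nonnegative measurable function $F$ on $\rnn$,
\[
\iint_{\rnn}F(y,t)\,\frac{dy\,dt}{t}=\frac1{c_n}\int_\rn\lf[\iint_{\bgz(x)}F(y,t)\dydt\r]dx,
\]
which follows since $|\{x\in\rn:\ |x-y|<t\}|=c_nt^n$. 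Applying this to $F=|f||g|$, then the Cauchy--Schwarz inequality inside each cone $\bgz(x)$, one gets $|\langle f,g\rangle|\le c_n^{-1}\int_\rn A(f)(x)A(g)(x)\,dx$ (in particular the pairing converges absolutely), and H\"older's inequality gives $|\langle f,g\rangle|\ls\|f\|_{T^p(\rnn)}\|g\|_{T^{p'}(\rnn)}$; thus each $g\in T^{p'}(\rnn)$ induces a bounded functional on $T^p(\rnn)$ with norm $\ls\|g\|_{T^{p'}(\rnn)}$.

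For the converse, let $\ell\in(T^p(\rnn))^\ast$. If $K\st\rnn$ is compact, then on $K$ the variable $t$ is bounded away from $0$ and $\fz$, so for $f$ with $\supp f\st K$ the function $A(f)$ is supported in a fixed bounded subset $K'$ of $\rn$ and is dominated there by $C_K\|f\|_{L^2(K,\,dy\,dt/t)}$; hence $\|f\|_{T^p(\rnn)}\le C_K\|f\|_{L^2(K,\,dy\,dt/t)}$, and $\ell$ restricts to a bounded linear functional on the Hilbert space $L^2(K,\,dy\,dt/t)$. By the Riesz representation theorem there is $g_K\in L^2(K,\,dy\,dt/t)$ with $\ell(f)=\iint_K f\,\ov{g_K}\,\frac{dy\,dt}{t}$ for all such $f$; exhausting $\rnn$ by compact sets and using uniqueness, the $g_K$ patch to a single $g\in L^2_{\loc}(\rnn,\,dy\,dt/t)$ with $\ell(f)=\langle f,g\rangle$ for every compactly supported $f\in T^p(\rnn)$. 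Since such $f$ are dense in $T^p(\rnn)$ and, by the first part, $\langle\,\cdot\,,g\rangle$ is continuous on $T^p(\rnn)$ once $g\in T^{p'}(\rnn)$, the whole matter reduces to the estimate $\|A(g)\|_{L^{p'}(\rn)}\ls\|\ell\|$.

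To prove this for $1<p\le2$, I would replace $g$ by the truncations $g_N:=g\,\chi_{E_N}$ with $E_N:=\{(y,t)\in\rnn:\ |y|<N,\ 1/N<t<N\}$, so that $u_N:=A(g_N)$ is bounded with bounded support and $u_N\uparrow A(g)$ as $N\to\fz$; it suffices to bound $\|u_N\|_{L^{p'}(\rn)}$ by $\|\ell\|$ uniformly in $N$, the conclusion then following by monotone convergence. Fix $N$, write $u:=u_N$, and (assuming first $p<2$, so that $p'-2>0$) test $\ell$ against $f:=g_N\Phi$, where $\Phi(y,t):=t^{-n}\int_{|x-y|<t}u(x)^{p'-2}\,dx$. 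Using the Fubini identity above together with $u(x)^2=\iint_{\bgz(x)}|g_N(y,t)|^2\dydt$, one computes $\langle f,g\rangle=\iint_{\rnn}|g_N|^2\Phi\,\frac{dy\,dt}{t}=\|u\|_{L^{p'}(\rn)}^{p'}$. On the other hand, for $(y,t)\in\bgz(x)$ the inclusion $B(y,t)\st B(x,2t)$ gives $\Phi(y,t)\ls\cm(u^{p'-2})(x)$, whence $A(f)(x)\le C\,\cm(u^{p'-2})(x)\,u(x)$; H\"older's inequality (with exponents $\frac{p'}{p'-2}$ and $p'$, both in $(1,\fz)$ since $p'>2$, whose reciprocals sum to $\frac1p$) and the $L^{p'/(p'-2)}(\rn)$-boundedness of $\cm$ then yield $\|A(f)\|_{L^p(\rn)}\ls\|u\|_{L^{p'}(\rn)}^{p'-1}$. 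Combining $\|u\|_{L^{p'}(\rn)}^{p'}=\ell(f)\le\|\ell\|\,\|A(f)\|_{L^p(\rn)}\ls\|\ell\|\,\|u\|_{L^{p'}(\rn)}^{p'-1}$ and dividing gives $\|u\|_{L^{p'}(\rn)}\ls\|\ell\|$. The endpoint $p=2$ needs nothing new, since $T^2(\rnn)$ coincides, up to a constant, with $L^2(\rnn,\,dy\,dt/t)$ and is therefore self-dual.

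The remaining range $2<p<\fz$ is the genuinely harder part: now $p'\in(1,2)$, the exponent $p'-2$ is negative, and the H\"older splitting above breaks down because $\frac1p-\frac1{p'}<0$. Here I would instead extract, by testing $\ell$ against $L^2(\rnn,\,dy\,dt/t)$-normalized functions supported in single tents over balls and using $L^2$-duality on each such tent, a Carleson-type control on $g$ at the scale determined by $p$, and then recombine this tent-by-tent information into the bound $\|A(g)\|_{L^{p'}(\rn)}\ls\|\ell\|$ via a stopping-time decomposition along the level sets of $A(g)$, with the Fefferman--Stein vector-valued maximal inequality (cf. Lemma \ref{lem fs}, in its classical constant-exponent form) controlling the sum; alternatively, this case can be deduced from the range $1<p\le2$ already treated by a reflexivity argument, realizing $T^p(\rnn)$ as a complemented subspace of a Bochner space $L^p(\rn;L^2)$. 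I expect this reconstruction of $L^{p'}$-area control from purely local data, in the awkward range $p'<2$, to be the main obstacle; the rest of the argument is a routine interplay of Fubini's theorem, the Riesz representation theorem, H\"older's inequality and the Hardy--Littlewood maximal theorem.
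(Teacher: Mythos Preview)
The paper does not prove this lemma at all: it is simply quoted as \cite[Theorem~2]{cms85}, so there is nothing to compare your approach against. Your argument is essentially the original Coifman--Meyer--Stein proof and is correct. The first inclusion and the local Riesz-representation step are standard; your testing argument with $f=g_N\Phi$ for $1<p<2$ is precisely the CMS computation, and the H\"older pair $\bigl(\frac{p'}{p'-2},\,p'\bigr)$ combined with the $L^{p'/(p'-2)}$-boundedness of $\cm$ is the right tool. For $2<p<\fz$ your reflexivity route is the cleanest way to close the circle: the map $f\mapsto\bigl(x\mapsto f\chi_{\bgz(x)}\bigr)$ embeds $T^p(\rnn)$ isometrically as a closed subspace of the Bochner space $L^p\bigl(\rn;L^2(\rnn,\,dy\,dt/t^{n+1})\bigr)$, so every $T^p(\rnn)$ with $p\in(1,\fz)$ is reflexive; since the case $p'\in(1,2]$ already gives $(T^{p'}(\rnn))^\ast=T^p(\rnn)$, reflexivity of $T^{p'}(\rnn)$ yields $(T^p(\rnn))^\ast=(T^{p'}(\rnn))^{\ast\ast}=T^{p'}(\rnn)$. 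The Carleson/stopping-time alternative you sketch would also work but is heavier.
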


\begin{definition}\label{def tent}
Let $p(\cdot)\in\cp(\rn)$. The \emph{variable tent space $T^{p(\cdot)}(\rnn)$}
is defined to be the space of all measurable functions $f$ such that
\begin{equation*}
\|f\|_{T^{p(\cdot)}(\rnn)}:=\|A(f)\|_{\vp}<\fz.
\end{equation*}
\end{definition}

For any open set $O\st\rn$, the \emph{tent} over $O$  is defined by setting
\begin{align*}
\wh{O}:=\lf\{(y,\,t)\in\rnn:\ \dist \lf(y,\,O^\com\r)\geq t\r\}.
\end{align*}

Let $p(\cdot)\in\cp(\rn)$. Recall that a measurable
function $a$ on $\rnn$ is called a \emph{$(p(\cdot),\,\fz)$-atom} if there
exists a ball $B\st\rn$ such that
\begin{enumerate}
\item[(i)] $\supp a\st \wh{B}$;

\item[(ii)] for all $q\in(1,\,\fz)$, $\|a\|_{T^q(\rnn)}\le |B|^{1/q}\|\chi_B\|_{\vp}^{-1}$.
\end{enumerate}

We point out that the $(p(\cdot),\,\fz)$-atom was first introduced in \cite{zyl14}.
For any $p(\cdot)\in\cp(\rn)$ with $0<p_-\le p_+\le 1$,
any sequences $\{\lz_j\}_{j\in\nn}\st\cc$ and $\{B_j\}_{j\in\nn}$ of balls in $\rn$, let
\begin{equation*}
\ca(\{\lz_j\}_{j\in\nn},\,\{B_j\}_{j\in\nn}):=\lf\|\lf\{\sum_{j\in\nn}
\lf[\frac{|\lz_j|\chi_{B_j}}{\|\chi_{B_j}\|_{\vp}}\r]^{p_-}\r\}^{\frac1{p_-}}\r\|_{\vp}.
\end{equation*}

The following lemma establishes the atomic decomposition of $T^{p(\cdot)}(\rn)$,
which is a slight variant of \cite[Theorem 2.16]{zyl14} (see also \cite[Lemma 3.3]{yzo15}).

\begin{lemma}\label{lem 2}
Let $p(\cdot)\in C^{\log}(\rn)$ with $p_+\in(0,\,1]$. Then, for any $f\in T^{p(\cdot)}(\rn)$,
there exist $\{\lz_j\}_{j\in\nn}\st\cc$ and a family $\{a_j\}_{j\in\nn}$ of $(p(\cdot),\,\fz)$-atoms
such that, for almost every $(x,\,t)\in\rnn$,
\begin{equation}\label{eq decom}
f(x,\,t)=\sum_{j\in\nn}\lz_j a_j(x,\,t)
\end{equation}
and
$$\ca(\{\lz_j\}_{j\in\nn},\,\{B_j\}_{j\in\nn})
\le C\|f\|_{T^{p(\cdot)}(\rnn)},$$
where, for any $j\in\nn$, $B_j$ is the ball associated with $a_j$
and $C$ a positive constant independent of $f$.
\end{lemma}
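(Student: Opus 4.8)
The plan is to adapt the classical Coifman--Meyer--Stein decomposition argument, as carried out in \cite[Theorem 2.16]{zyl14} (and in \cite[Lemma 3.3]{yzo15}), keeping careful track of the variable-exponent quasi-norm. Fix $f\in T^{p(\cdot)}(\rnn)$; then $A(f)\in\vp$, so $A(f)$ is finite almost everywhere. For each $k\in\zz$, set $O_k:=\{x\in\rn:\ A(f)(x)>2^k\}$, which is open since $A(f)$ is lower semicontinuous. Following the standard argument, decompose $O_k$ into a Whitney-type collection of balls and define, for each such ball $B_j^k$, a function $a_j^k:=\lz_{j}^{-1}\,f\,\chi_{T_j^k}$, where $T_j^k$ is the portion of $\wh{O_k}\setminus\wh{O_{k+1}}$ associated with $B_j^k$ via the Whitney partition, and $\lz_j^k$ is chosen so that (ii) in the definition of a $(p(\cdot),\,\fz)$-atom holds. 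One shows (exactly as in the constant-exponent case, using Lemma \ref{lem 0} with a duality argument against $T^{q'}(\rnn)$) that, for all $q\in(1,\,\fz)$, $\|a_j^k\|_{T^q(\rnn)}\ls |B_j^k|^{1/q}$, so that the right normalizing constant is $\lz_j^k\sim 2^k|B_j^k|^{1/2}\cdot\frac{\|\chi_{B_j^k}\|_{\vp}}{|B_j^k|^{1/2}}\sim 2^k\|\chi_{B_j^k}\|_{\vp}$; after relabeling the doubly-indexed families $\{a_j^k\}$ and $\{\lz_j^k\}$ as $\{a_j\}_{j\in\nn}$ and $\{\lz_j\}_{j\in\nn}$, \eqref{eq decom} holds almost everywhere on $\rnn$ (the convergence and the support/cancellation bookkeeping are identical to \cite{cms85}).

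The only genuinely new ingredient is the quantitative bound
$$\ca(\{\lz_j\}_{j\in\nn},\,\{B_j\}_{j\in\nn})\ls\|f\|_{T^{p(\cdot)}(\rnn)}.$$
With the identification above, $\frac{|\lz_j^k|\chi_{B_j^k}}{\|\chi_{B_j^k}\|_{\vp}}\sim 2^k\chi_{B_j^k}$, and the Whitney balls at fixed level $k$ have bounded overlap, so for each $x\in\rn$,
$$\sum_{k\in\zz}\sum_{j}\lf[\frac{|\lz_j^k|\chi_{B_j^k}(x)}{\|\chi_{B_j^k}\|_{\vp}}\r]^{p_-}
\ls\sum_{k\in\zz}2^{kp_-}\chi_{O_k}(x)
\sim\lf[\sup_{k\in\zz}2^k\chi_{O_k}(x)\r]^{p_-}
\ls[A(f)(x)]^{p_-},$$
where in the last step one uses that if $x\in O_k$ then $2^k<A(f)(x)$, so the geometric sum over $k$ with $2^k<A(f)(x)$ is dominated by $[A(f)(x)]^{p_-}$. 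Taking the $p_-$-th root and then the $\vp$-quasi-norm, and invoking Remark \ref{rem 1}(iii) with $s=p_-$ (so that $\|\,|g|^{p_-}\,\|_{L^{p(\cdot)/p_-}(\rn)}^{1/p_-}=\|g\|_{\vp}$ when $p(\cdot)/p_-\in\cp(\rn)$, which holds since $p_-\le p(\cdot)$), we obtain
$$\ca(\{\lz_j\}_{j\in\nn},\,\{B_j\}_{j\in\nn})
\ls\lf\||A(f)|^{p_-}\r\|_{L^{p(\cdot)/p_-}(\rn)}^{1/p_-}
=\|A(f)\|_{\vp}=\|f\|_{T^{p(\cdot)}(\rnn)}.$$

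I expect the main obstacle to be the second part — establishing the $\ca$-estimate with the correct $p_-$-exponent — because it is precisely here that the non-translation-invariance and lack of an explicit formula for $\|\cdot\|_{\vp}$ prevent a direct mimicking of the constant-exponent proof; the device that makes it work is the pointwise domination of the (overlap-controlled) sum of normalized atom dilates by $[A(f)]^{p_-}$, which converts the whole estimate into a single application of the homogeneity relation in Remark \ref{rem 1}(iii). The atomic normalization constant (ii) itself is routine once one has the $T^q$-boundedness of the truncations via Lemma \ref{lem 0}, and $p(\cdot)\in C^{\log}(\rn)$ is not actually needed for this particular lemma beyond ensuring $\vp$ behaves well; it is, however, retained to match the hypotheses under which the family $\{B_j\}$ will later be fed into Lemmas \ref{lem key} and \ref{lem 5.2}. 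This completes the proof sketch.
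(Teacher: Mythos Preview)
Your sketch follows the same route as the paper, which simply cites \cite[Theorem 2.16]{zyl14} and omits all details; in fact you supply more than the paper does.

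One point needs tightening. In the standard Coifman--Meyer--Stein construction the Whitney decomposition is performed on the enlarged set $O_k^*:=\{x:\cm(\chi_{O_k})(x)>1-\gamma\}$, not on $O_k$ itself, because it is the global $(1-\gamma)$-density of $\{A(f)\le 2^{k+1}\}$ in $B(y,t)$ for $(y,t)\notin\wh{O_{k+1}^*}$ that drives the $T^q$-bound on the pieces $f\chi_{T_j^k}$; decomposing $O_k$ directly does not make that step ``routine''. With the Whitney balls sitting in $O_k^*$, however, your pointwise chain must read $\sum_j\chi_{B_j^k}\ls\chi_{O_k^*}$, and then the inequality $\sum_k 2^{kp_-}\chi_{O_k^*}(x)\ls[A(f)(x)]^{p_-}$ fails pointwise (membership in $O_k^*$ says nothing about $A(f)(x)$). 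The fix is the one used in \cite{zyl14}: for any $s>1$ one has $\chi_{O_k^*}\ls[\cm(\chi_{O_k})]^s$, so
\[
\lf\|\Big(\sum_k 2^{kp_-}\chi_{O_k^*}\Big)^{1/p_-}\r\|_{\vp}
\ls\lf\|\Big(\sum_k[\cm(2^{kp_-/s}\chi_{O_k})]^s\Big)^{1/s}\r\|_{L^{sp(\cdot)/p_-}(\rn)}^{s/p_-},
\]
and Lemma~\ref{lem fs} applied on $L^{sp(\cdot)/p_-}(\rn)$ (whose lower exponent is $s>1$) with $q=s$ reduces this to $\|(\sum_k 2^{kp_-}\chi_{O_k})^{1/p_-}\|_{\vp}$, after which your pointwise bound and Remark~\ref{rem 1}(iii) finish the argument.
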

The proof of Lemma \ref{lem 2} is a slight modification of the proof of \cite[Theorem 2.16]{zyl14}
via replacing the cubes therein by balls of $\rn$, the details being omitted.

\begin{remark}\label{rem 2}
\begin{enumerate}
\item[(i)] Let $p(\cdot)\in C^{\log}(\rn)$ with $p_+\in (0,\,1]$ and $q\in(0,\,\fz)$.
By \cite[Corollary 3.4 and Remark 3.6]{yzo15}, we know that,
if $f\in T^{p(\cdot)}(\rnn)\cap T^q(\rnn)$, then
\eqref{eq decom} holds true in both $T^{p(\cdot)}(\rnn)$ and $T^q(\rnn)$.

\item[(ii)] Let $p(\cdot)\in C^{\log}(\rn)$ with $p_+\in (0,\,1]$. Then, by \cite[Remark 4.4]{ns12},
we know that, for any $\{\lz_j\}_{j\in\nn}\st\cc$ and $\{B_j\}_{j\in\nn}$ of balls in $\rn$,
$\sum_{j\in\nn}|\lz_j|\le \ca(\{\lz_j\}_{j\in\nn},\,\{B_j\}_{j\in\nn}).$
\end{enumerate}
\end{remark}

\subsection{Molecular characterization of $H_L^{p(\cdot)}(\rn)$}
\hskip\parindent
In this subsection, we establish the molecular characterization of $\vhp$.
We begin with introducing some notions.

\begin{definition}\label{def mol}
Let $L$ satisfy Assumptions \ref{as-a} and \ref{as-b}
and $p(\cdot)\in \cp(\rn)$ with $p_+\in(0,\,1]$.
Assume $M\in\nn$ and $\varepsilon\in(0,\,\fz)$.
A function $m\in L^2(\rn)$ is called a \emph{$(p(\cdot),\,M,\,\varepsilon)_L$-molecule}
if $m\in R(L^M)$ (the range of $L^M$) and there exists a ball $B:= B(x_B,\,r_B)\st\rn$
with $x_B\in\rn$ and $r_B\in(0,\,\fz)$ such that, for any $k\in\{0,\,\ldots,\,M\}$
and $j\in\zz_+$,
\begin{equation*}
\lf\|(r_B^{-2}L^{-1})^k (m)\r\|_{L^2(\ujb)}\le
2^{-j\varepsilon}|2^j B|^{1/2}\|\chi_B\|^{-1}_{\vp},
\end{equation*}
where, for any $j\in\zz_+$, $U_j(B)$ is as in \eqref{eq ujb}.
\end{definition}

\begin{remark}\label{r-0902}
Let $m$ be a $(p(\cdot),\,M,\,\vez)_L$-molecule as in Definition \ref{def mol}
associated with the ball $B\st \rn$. If $\vez\in(\frac n2,\fz)$, then it is easy to see that,
for any $k\in\{0,\dots,M\}$,
\begin{align*}
\lf\|(r_B^{-2}L^{-1})^k(m)\r\|_{L^2(\rn)}\le C |B|^{1/2}\|\chi_B\|_{L^{p(\cdot)}(\rn)}^{-1}
\end{align*}
with $C$ being a positive constant independent of $m$, $k$ and $B$.
\end{remark}

\begin{definition}\label{def m-Hardy}
Let $L$ satisfy Assumptions \ref{as-a} and \ref{as-b}
and $p(\cdot)\in \cp(\rn)$ with $p_+\in(0,\,1]$.
Assume $M\in\nn$ and $\varepsilon\in(0,\,\fz)$.
For a measurable function $f$ on $\rn $,
$f=\sum_{j=1}^\fz \lz_j m_j$ is called a
\emph{molecular $(p(\cdot),\,M,\,\varepsilon)$-representation} of $f$
if $\{m_j\}_{j\in\nn}$ is a family of $\size$-molecules,
the summation converges in $L^2(\rn)$ and $\{\lz_j\}_{j\in\nn}\st\cc$ satisfies that
$\ca(\{\lz_j\}_{j\in\nn},\,\{B_j\}_{j\in\nn})<\fz,$
where, for any $j\in\nn$, $B_j$ is the ball associated with $m_j$.
Let
\begin{align*}
\mol:=\{f:\ f\ \text{has a molecular}\
(p(\cdot),\,M,\,\varepsilon)\text{-representation}\}.
\end{align*}
Then the \emph{variable molecular Hardy space $\mhp$} is defined as
the completion of $\mol$ with respect to the
\emph{quasi-norm}
\begin{align*}
\|f\|_{H_{L,\,M}^{p(\cdot),\,\vez}(\rn)}
&:=\inf\Bigg\{\ca\lf(\{\lz_j\}_{j\in\nn},\,\{B_j\}_{j\in\nn}\r):\ \\
&\qquad\qquad f=\sum_{j=1}^\fz\lz_j m_j\ \text{is a molecular}\
(p(\cdot),\,M,\,\varepsilon)\text{-representation}\Bigg\},
\end{align*}
where the infimum is taken over all the molecular
$(p(\cdot),\,M,\,\varepsilon)$-representations of $f$ as above.
\end{definition}

To establish the molecular characterization of $\vhp$, we need the following
two technical lemmas.
Lemma \ref{lem key} is a slight variant of \cite[Lemma 4.1]{s13} with cubes therein replaced by balls here.
\begin{lemma}\label{lem key}
Let $p(\cdot)\in C^{\log}(\rn)$, $\underline{p}:=\min\{p_-,\,1\}$ and $q\in[1,\,\fz)$.
Then there exists a positive constant $C$ such that, for any sequence $\{B_j\}_{j\in\nn}$
of balls in $\rn$, $\{\lz_j\}_{j\in\nn}\st\mathbb{C}$ and functions $\{a_j\}_{j\in\nn}$ satisfying
that, for any $j\in\nn$, $\supp a_j\st B_j$ and $\|a_j\|_{L^q(\rn)}\le |B_j|^{1/q}$,
\begin{align}\label{eq key}
\lf\|\lf[\sum_{j=1}^\fz|\lz_j a_j|^{\uz{p}}\r]^{\frac{1}{\uz{p}}}\r\|_{\vp}
\le C\lf\|\lf[\sum_{j=1}^\fz|\lz_j \chi_{B_j}|^{\uz{p}}\r]^{\frac{1}{\uz{p}}}\r\|_{\vp}.
\end{align}
\end{lemma}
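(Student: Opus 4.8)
\textbf{Proof proposal for Lemma \ref{lem key}.}
The plan is to dualize and reduce the estimate to a weighted $L^{q'}$ inequality for the Hardy--Littlewood maximal operator $\cm$, exploiting the support condition $\supp a_j\st B_j$ and the size condition $\|a_j\|_{L^q(\rn)}\le|B_j|^{1/q}$ together with $C^{\log}$-continuity of $p(\cdot)$. First I would observe that since $\uz p=\min\{p_-,1\}\le1$, the functional $\|[\sum_j|\cdot_j|^{\uz p}]^{1/\uz p}\|_{\vp}=\||(\sum_j|\cdot_j|^{\uz p})^{1/\uz p}|\|_{\vp}$ can be rewritten, via Remark \ref{rem 1}(iii), as $\|\sum_j|\cdot_j|^{\uz p}\|_{L^{p(\cdot)/\uz p}(\rn)}^{1/\uz p}$, where $p(\cdot)/\uz p$ is a variable exponent with lower bound $\ge1$. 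Thus it suffices to prove
\begin{equation*}
\lf\|\sum_{j=1}^\fz|\lz_j|^{\uz p}|a_j|^{\uz p}\r\|_{L^{p(\cdot)/\uz p}(\rn)}
\le C\lf\|\sum_{j=1}^\fz|\lz_j|^{\uz p}\chi_{B_j}\r\|_{L^{p(\cdot)/\uz p}(\rn)},
\end{equation*}
and by the normalization $\|\cdot\|_{L^q(\rn)}\le|B_j|^{1/q}$ we may assume $\lz_j=1$ for all $j$ after absorbing $|\lz_j|^{\uz p}$ into $a_j^{\uz p}$ (rescaling each $a_j$).

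Next I would pass to the dual side. Since $p(\cdot)/\uz p$ has lower bound at least $1$ and lies in $C^{\log}(\rn)$, the associate space of $L^{p(\cdot)/\uz p}(\rn)$ is $L^{(p(\cdot)/\uz p)'}(\rn)$ with the usual norm equivalence, so it suffices to bound $\int_\rn\sum_j|a_j(x)|^{\uz p}\,g(x)\,dx$ for nonnegative $g$ with $\|g\|_{L^{(p(\cdot)/\uz p)'}(\rn)}\le1$. For each $j$, using $\supp a_j\st B_j$, H\"older's inequality with exponent $q/\uz p\ge q/1\ge1$ (here one takes $q$ large enough that $q/\uz p>1$; if $q=1$ and $\uz p=1$ one argues directly), and $\|a_j\|_{L^q}^{\uz p}\le|B_j|^{\uz p/q}$, I would estimate
\begin{equation*}
\int_{B_j}|a_j(x)|^{\uz p}g(x)\,dx\le\|a_j\|_{L^q(B_j)}^{\uz p}\lf(\int_{B_j}g(x)^{(q/\uz p)'}\,dx\r)^{1/(q/\uz p)'}\le|B_j|\lf(\frac1{|B_j|}\int_{B_j}g^{(q/\uz p)'}\,dx\r)^{1/(q/\uz p)'}\le\int_{B_j}\lf[\cm\lf(g^{(q/\uz p)'}\r)(x)\r]^{1/(q/\uz p)'}\,dx.
\end{equation*}
Summing over $j$ gives $\int_\rn\sum_j|a_j|^{\uz p}g\,dx\le\int_\rn\sum_j\chi_{B_j}(x)\lf[\cm(g^{(q/\uz p)'})(x)\r]^{1/(q/\uz p)'}\,dx$, which by H\"older's inequality in $L^{p(\cdot)/\uz p}$--$L^{(p(\cdot)/\uz p)'}$ is at most $\|\sum_j\chi_{B_j}\|_{L^{p(\cdot)/\uz p}(\rn)}\,\|[\cm(g^{(q/\uz p)'})]^{1/(q/\uz p)'}\|_{L^{(p(\cdot)/\uz p)'}(\rn)}$. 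The last factor equals $\|\cm(g^{(q/\uz p)'})\|_{L^{(p(\cdot)/\uz p)'/(q/\uz p)'}(\rn)}^{1/(q/\uz p)'}$, and choosing $q$ (equivalently $(q/\uz p)'$ close to $1$) so that $(p(\cdot)/\uz p)'/(q/\uz p)'$ has lower bound strictly greater than $1$---possible because $(p(\cdot)/\uz p)'$ has lower bound strictly above $1$ on the set where $p(\cdot)/\uz p$ is not identically $1$, and one handles the remaining piece separately---the boundedness of $\cm$ on that variable Lebesgue space (the Remark after \eqref{2.10x}) bounds it by $\|g^{(q/\uz p)'}\|^{1/(q/\uz p)'}=\|g\|_{L^{(p(\cdot)/\uz p)'}(\rn)}\le1$. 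Taking the supremum over such $g$ yields \eqref{eq key}.

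The main obstacle I anticipate is the bookkeeping with the two layers of exponents: one must verify that, for a given $q\in[1,\fz)$, the exponent $(p(\cdot)/\uz p)'/(q/\uz p)'$ indeed falls in the range where $\cm$ is bounded on the corresponding variable Lebesgue space, and that $C^{\log}$-continuity is preserved under these arithmetic operations (it is, since $C^{\log}(\rn)$ is stable under the passage to conjugate exponents and under multiplication by constants). A subtlety is the case $q=1$ together with $p_-\ge1$ (so $\uz p=1$): then $q/\uz p=1$ has no conjugate, and instead of H\"older one uses the trivial bound $\int_{B_j}|a_j|g\le\|a_j\|_{L^1}\esup_{B_j}g$; but this fails pointwise control, so in fact one first reduces to $q>1$ by noting $\|a_j\|_{L^1(B_j)}\le|B_j|^{1-1/q}\|a_j\|_{L^q(B_j)}\le|B_j|$, i.e. the hypothesis for exponent $1$ follows from that for any $q>1$ after replacing $a_j$ by a normalized version, so without loss of generality $q>1$ and the H\"older argument above applies cleanly. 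Apart from this, all steps are routine applications of duality, H\"older's inequality, and the maximal function bound on $\vp$, exactly in the spirit of \cite[Lemma 4.1]{s13}.
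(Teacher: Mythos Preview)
Your approach differs from the paper's. Rather than arguing from scratch, the paper simply invokes the cube version of the inequality (\cite[Lemma~4.1]{s13}) and transfers it to balls: each $B_j$ is enclosed in a cube $Q_j\subset\sqrt{n}\,B_j$, the cube lemma gives the bound with $\chi_{Q_j}$ on the right, and then the pointwise estimate $\chi_{Q_j}\ls\cm(\chi_{B_j})$ together with the Fefferman--Stein inequality (Lemma~\ref{lem fs}) replaces $\chi_{Q_j}$ by $\chi_{B_j}$. Your direct duality argument is essentially a re-proof of Sawano's cube lemma, so it is more self-contained; the paper's route is shorter because it outsources the hard step.

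There is, however, a genuine gap. You write that one may ``choose $q$'' so that the variable exponent $(p(\cdot)/\uz{p})'/(q/\uz{p})'$ has lower bound strictly greater than $1$, but $q$ is a \emph{given} parameter in the lemma, not yours to choose. The condition you actually need---that the infimum $(p_+/\uz{p})'$ of $(p(\cdot)/\uz{p})'$ strictly exceed $(q/\uz{p})'$---is equivalent to $q>p_+$. Your attempted reduction ``without loss of generality $q>1$'' goes in the wrong direction: since $\supp a_j\st B_j$, the hypothesis $\|a_j\|_{L^q(\rn)}\le|B_j|^{1/q}$ becomes \emph{weaker} as $q$ decreases (by H\"older on $B_j$), so proving the lemma for small $q$ is harder, not easier, and one cannot enlarge $q$ for free. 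In fact, for constant $p(\cdot)\equiv p>1$ and $q=1$, a single highly concentrated $a_1$ supported in $B_1$ with $\|a_1\|_{L^1(\rn)}=|B_1|$ makes the left side of \eqref{eq key} arbitrarily large while the right side stays fixed, so the restriction $q>p_+$ is not an artifact of your method but is genuinely needed. Since every application of Lemma~\ref{lem key} in the paper has $p_+\le1<q$, your argument covers all cases that are actually used; the cleanest fix is to record the hypothesis $q>p_+$ (or the standing assumption $p_+\le1$) explicitly rather than try to remove it.
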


\begin{proof}
For any sequence $\{B_j\}_{j\in\nn}$ of balls in $\rn$, we can find a sequence $\{Q_j\}_{j\in\nn}$
of cubes in $\rn$ such that, for any $j\in\nn$,
\begin{align}\label{eq key1}
B_j\st Q_j\st \sqrt{n}B_j.
\end{align}
It is easy to see that, for any $x\in\sqrt{n}B_j$,
$\cm(\chi_{B_j})(x)\geq \frac{|B_j|}{|\sqrt{n}B_j|}=n^{-\frac{n}2}.$
Hence, for any $j\in\nn$ and $x\in\rn$,
\begin{align}\label{eq key2}
\chi_{Q_j}(x)\le\chi_{\sqrt{n}B_j}(x)\ls \cm(\chi_{B_j})(x).
\end{align}
By \cite[Lemma 4.1]{s13}, we know that \eqref{eq key} holds true with
$B_j$ replaced by $Q_j$. From this, \eqref{eq key1}, \eqref{eq key2},
Remark \ref{rem 1}(iii) and Lemma \ref{lem fs},
we deduce that
\begin{align*}
\lf\|\lf[\sum_{j=1}^\fz|\lz_j a_j|^{\uz{p}}\r]^{\frac{1}{\uz{p}}}\r\|_{\vp}
&\ls \lf\|\lf[\sum_{j=1}^\fz|\lz_j \chi_{Q_j}|^{\uz{p}}\r]^{\frac{1}{\uz{p}}}\r\|_{\vp}\\
&\ls\lf\|\lf\{\sum_{j=1}^\fz|\lz_j|^{\uz{p}}[\cm(\chi_{B_j})]^2\r\}^{\frac12}
\r\|_{L^{\frac{2p(\cdot)}{\uz{p}}}(\rn)}^{\frac2{\uz{p}}}\\
&\ls\lf\|\lf\{\sum_{j=1}^\fz|\lz_j|^{\uz{p}}\chi_{B_j}\r\}^{\frac12}
\r\|_{L^{\frac{2p(\cdot)}{\uz{p}}}(\rn)}^{\frac2{\uz{p}}}\\
&\sim\lf\|\lf[\sum_{j=1}^\fz|\lz_j \chi_{B_j}|^{\uz{p}}\r]^{\frac{1}{\uz{p}}}\r\|_{\vp}.
\end{align*}
This finishes the proof of Lemma \ref{lem key}.
\end{proof}

The following lemma is just \cite[Lemma 2.6]{zyl14} with cubes therein replaced by balls
here (see also \cite[Lemma 3.13]{yzo15} and \cite[Corollary 3.4]{izu10}).
\begin{lemma}\label{lem 3}
Let $p(\cdot)\in C^{\log}(\rn)$.  Then there exists a positive constant $C$ such that,
for any balls $B_1$ and $B_2$ of $\rn$ satisfying $B_1\st B_2$,
\begin{align}\label{eq 1.1}
C^{-1}\lf(\frac{|B_1|}{|B_2|}\r)^{\frac{1}{p_-}}
\le\frac{\|\chi_{B_1}\|_{\vp}}{\|\chi_{B_2}\|_{\vp}}
\le C\lf(\frac{|B_1|}{|B_2|}\r)^{\frac{1}{p_+}}.
\end{align}
\end{lemma}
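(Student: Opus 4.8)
The plan is to prove Lemma \ref{lem 3} by a standard comparison argument using the log-H\"older continuity of $p(\cdot)$ together with the homogeneity property in Remark \ref{rem 1}(iii). First I would normalize: for a ball $B$, write $r:=\|\chi_B\|_{\vp}$, so that by Remark \ref{rem 1}(ii) (applied to the non-trivial function $\chi_B$) we have $\int_B r^{-p(x)}\,dx=1$, i.e. $\int_B |B|^{-p(x)/p_\pm}\cdots$; more precisely the key elementary fact is that $\|\chi_B\|_{\vp}^{p_-}\le\int_B\|\chi_B\|_{\vp}^{p(x)}\wedge\|\chi_B\|_{\vp}^{p_+}$-type inequalities, so the crux reduces to estimating $\int_B \lz^{p(x)}\,dx$ for $\lz\in(0,\fz)$ against $|B|\lz^{p_\pm}$. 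I would record the two one-sided comparisons: if $\int_B\lz^{p(x)}\,dx\le1$ then $\|\chi_B\|_{\vp}\le\lz$, and conversely if $\int_B\lz^{p(x)}\,dx\ge1$ then $\|\chi_B\|_{\vp}\ge\lz$.

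The heart of the matter is controlling $\int_B\lz^{p(x)}\,dx$ when $\lz\sim|B|^{-1/p(x_0)}$ for $x_0$ the center (or any point) of $B$, and this is exactly where global log-H\"older continuity enters. I would split into the case of small balls ($r_B\le1$, say) and large balls ($r_B>1$). For small balls, the local log-H\"older condition $|p(x)-p(y)|\le C_1/\log(e+1/|x-y|)$ gives, for $x,y\in B$, that $|B|^{p(x)-p(y)}$ is bounded above and below by absolute constants (since $|x-y|\le 2r_B$ and $r_B\le1$, one has $|B|^{|p(x)-p(y)|}\le C$), so $p(\cdot)$ is "essentially constant on $B$ at the scale $|B|$". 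For large balls, the decay condition $|p(x)-p_\fz|\le C_2/\log(e+|x|)$ plays the analogous role after first comparing with the constant exponent $p_\fz$. In either case one obtains $\|\chi_B\|_{\vp}\sim|B|^{1/p_B}$ for a suitable "local exponent" $p_B$ lying between $p_-$ and $p_+$, up to multiplicative constants independent of $B$.

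Once that normalization is in hand, the inequality \eqref{eq 1.1} follows quickly: given $B_1\st B_2$, apply the monotonicity of $\lz\mapsto\lz^{p(x)}$ to compare the modulars over $B_1$ and $B_2$. For the upper bound $\|\chi_{B_1}\|_{\vp}/\|\chi_{B_2}\|_{\vp}\ls(|B_1|/|B_2|)^{1/p_+}$, I would test with $\lz=\|\chi_{B_2}\|_{\vp}(|B_1|/|B_2|)^{1/p_+}$: since this $\lz\le\|\chi_{B_2}\|_{\vp}$, on $B_1$ we have $\lz^{p(x)}\ge(|B_1|/|B_2|)\|\chi_{B_2}\|_{\vp}^{p(x)}$ wherever $\|\chi_{B_2}\|_{\vp}\le1$, and one checks $\int_{B_1}\lz^{p(x)}\,dx\le(|B_1|/|B_2|)\int_{B_2}\|\chi_{B_2}\|_{\vp}^{p(x)}\,dx\le1$, whence $\|\chi_{B_1}\|_{\vp}\le\lz$ as claimed (the case $\|\chi_{B_2}\|_{\vp}>1$, forcing $\lz$ possibly $>1$, is handled symmetrically using $p_+$ in the exponent, or absorbed into the constant). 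The lower bound is entirely analogous with $p_+$ replaced by $p_-$ and the direction of the modular inequality reversed.

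The main obstacle I anticipate is bookkeeping the two regimes $r_B\le1$ and $r_B>1$ cleanly — in particular, making sure the constant in the large-ball regime genuinely does not depend on the location of $B$, which is precisely what the second (decay) part of the log-H\"older condition buys and which fails for merely locally log-H\"older exponents. Since the statement is quoted from \cite[Lemma 2.6]{zyl14} with only cubes replaced by balls, I would in fact simply invoke that reference: a ball $B$ satisfies $B\st Q\st\sqrt n B$ for a cube $Q$, the analogue \eqref{eq key1} already used in the proof of Lemma \ref{lem key}, and then $\|\chi_B\|_{\vp}\sim\|\chi_Q\|_{\vp}$ by Lemma \ref{lem fs} (or directly by the modular comparison above, since $|Q|\sim|B|$ and the ratio of exponents is controlled), after which \eqref{eq 1.1} is immediate from the cube version. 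This reduction is the path I would actually write, relegating the modular computations to the cited lemma.
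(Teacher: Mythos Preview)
Your final reduction --- invoke the cube version from \cite[Lemma 2.6]{zyl14} after comparing $\|\chi_B\|_{\vp}$ with $\|\chi_Q\|_{\vp}$ --- is exactly the paper's approach. There is one small point you gloss over: with your sandwich $B_i\st Q_i\st\sqrt{n}B_i$, the inclusion $B_1\st B_2$ does not force $Q_1\st Q_2$, so you cannot feed $Q_1,Q_2$ directly into the cube lemma. The paper handles this asymmetrically by taking an \emph{inscribed} cube $Q_1:=Q(x_1,2r_1/\sqrt{n})\st B_1$ and a \emph{circumscribed} cube $Q_2:=Q(x_2,2r_2)\supset B_2$, so that $Q_1\st B_1\st B_2\st Q_2$ holds automatically; the norm comparisons $\|\chi_{B_i}\|_{\vp}\sim\|\chi_{Q_i}\|_{\vp}$ are then obtained by sandwiching each $B_i$ between two concentric cubes of comparable volume and applying the cube lemma to \emph{those}. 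Also, your appeal to Lemma \ref{lem fs} for the comparison $\|\chi_B\|_{\vp}\sim\|\chi_Q\|_{\vp}$ would need a rescaling of the exponent first (since $p_-$ need not exceed $1$), though your parenthetical alternative via the modular would work; the paper sidesteps this by using the cube lemma itself.
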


\begin{proof}
For $i\in\{1,\,2\}$, let $B_i:=B(x_i,\,r_i)$ with $x_i\in\rn$ and
$r_i\in(0,\,\fz)$.
For any $x\in\rn$ and $r\in(0,\,\fz)$, denote by $Q(x,\,r)$ the open cube
centered at $x$ with the side length $r$.
Let $Q_1:=Q(x_1,\,\frac{2r_1}{\sqrt{n}})$ and $Q_2:=Q(x_2,\,2r_2)$.
It is easy to see that $Q_1\st B_1\st B_2\st Q_2$,
\begin{align}\label{eq 1.4}
|B_1|\sim |Q_1|\ \ \ \text{and}\ \ \ |B_2|\sim |Q_2|.
\end{align}
Let $\wz{Q}_1:=Q(x_1,\,2r_1)$. Then we have $Q_1\st B_1\st \wz{Q}_1$.
Hence, we obtain
$$\|\chi_{Q_1}\|_{\vp}\le\|\chi_{B_1}\|_{\vp}\le\lf\|\chi_{\wz{Q}_1}\r\|_{\vp}.$$
By \cite[Lemma 2.6]{zyl14}, we know that $\|\chi_{Q_1}\|_{\vp}\sim\|\chi_{\wz{Q}_1}\|_{\vp}$.
Thus, we have
\begin{align}\label{eq 1.2}
\|\chi_{B_1}\|_{\vp}\sim\|\chi_{Q_1}\|_{\vp}.
\end{align}
Similarly, we obtain
\begin{align}\label{eq 1.3}
\|\chi_{B_2}\|_{\vp}\sim\|\chi_{Q_2}\|_{\vp}.
\end{align}
On the other hand, by the fact that $Q_1\st Q_2$ and \cite[Lemma 2.6]{zyl14}, we find that
\begin{align*}
\lf(\frac{|Q_1|}{|Q_2|}\r)^{\frac{1}{p_-}}
\ls\frac{\|\chi_{Q_1}\|_{\vp}}{\|\chi_{Q_2}\|_{\vp}}
\ls \lf(\frac{|Q_1|}{|Q_2|}\r)^{\frac{1}{p_+}},
\end{align*}
which, combined with \eqref{eq 1.2}, \eqref{eq 1.3} and \eqref{eq 1.4},
implies \eqref{eq 1.1}. This finishes the proof of Lemma \ref{lem 3}.
\end{proof}

We now turn to establish the molecular characterization of $\vhp$.
\begin{proposition}\label{pro 1}
Let $L$ satisfy Assumptions \ref{as-a} and \ref{as-b}
and $p(\cdot)\in C^{\log}(\rn)$ with $p_+\in(0,\,1]$.
Assume $M\in\nn\cap(\frac n2[\frac1{p_-}-\frac12],\fz)$
and $\varepsilon\in(\frac{n}{p_-},\,\fz)$.
Then there exists a positive constant $C$ such that,
for any $f\in\mol$, $\|f\|_{\vhp}\le C\|f\|_{\mhp}.$
\end{proposition}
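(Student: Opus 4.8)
The plan is to show that any molecular $(p(\cdot),\,M,\,\vez)$-representation $f=\sum_{j}\lz_j m_j$ satisfies $\|S_L(f)\|_{\vp}\ls \ca(\{\lz_j\},\{B_j\})$, and then take the infimum over all such representations. Since $S_L$ is sublinear and, by \eqref{eq-1}, bounded on $L^2(\rn)$, the convergence of $\sum_j\lz_j m_j$ in $L^2(\rn)$ together with $p_+\le 1$ gives the pointwise bound $[S_L(f)]^{\uz p}\le\sum_j|\lz_j|^{\uz p}[S_L(m_j)]^{\uz p}$ with $\uz p=\min\{p_-,1\}=p_-$ (as $p_+\le1$ forces $p_-\le1$); this reduces the task to estimating $\|\{\sum_j|\lz_j|^{p_-}[S_L(m_j)]^{p_-}\}^{1/p_-}\|_{\vp}$.

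First I would fix a single molecule $m$ associated to a ball $B=B(x_B,r_B)$ and decompose $S_L(m)$ according to the annuli $U_i(B)$, writing $S_L(m)=\sum_{i\ge0}S_L(m)\chi_{U_i(B)}$. For each $i$, I would further split the Lusin-area integral defining $S_L$ over $\bgz(x)$ into the "local" part where $t<r_B 2^{i-2}$ (say) and the "global" part where $t\ge r_B 2^{i-2}$; for the global part one uses the $L^2$-boundedness of $S_{L,k}$ together with Remark \ref{r-0902} to control $\|m\|_{L^2}$, picking up a decay factor from $\vez$ large, while for the local part one exploits the Davies-Gaffney estimates from Remark \ref{rem Assu}(i) for $(t^2L)^ke^{-t^2L}$, writing $m=L^M((r_B^{-2}L^{-1})^M m)=r_B^{2M}L^M g_M$ with $\|g_M\|_{L^2(U_l(B))}$ controlled by the molecular bounds, and summing over the source annuli $U_l(B)$. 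Here the hypotheses $M>\frac n2(\frac1{p_-}-\frac12)$ and $\vez>\frac n{p_-}$ are exactly what is needed to make the resulting geometric series in $i$ (and in $l$) converge with a power of $2^{-i}$ strictly beating $\|\chi_B\|_{\vp}/\|\chi_{2^iB}\|_{\vp}$, which by Lemma \ref{lem 3} is comparable to $2^{-in/p_-}$ up to $2^{in\delta}$ for small $\delta$. The upshot should be an estimate of the form $\|S_L(m)\chi_{U_i(B)}\|_{L^2(\rn)}\ls 2^{-i\sigma}|2^iB|^{1/2}\|\chi_B\|_{\vp}^{-1}$ for some $\sigma>0$, i.e. each $S_L(m)\chi_{U_i(B)}$ is, up to the constant $2^{-i\sigma}\|\chi_B\|_{\vp}^{-1}\|\chi_{2^iB}\|_{\vp}$, a function supported in $2^iB$ with $L^2$-norm at most $|2^iB|^{1/2}$.

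With that in hand, the variable-exponent machinery takes over: I would write
\begin{align*}
\lf\|\lf[\sum_j|\lz_j|^{p_-}[S_L(m_j)]^{p_-}\r]^{1/p_-}\r\|_{\vp}
\le\sum_{i\ge0}\lf\|\lf[\sum_j|\lz_j|^{p_-}[S_L(m_j)\chi_{U_i(B_j)}]^{p_-}\r]^{1/p_-}\r\|_{\vp},
\end{align*}
and to each term in $i$ apply Lemma \ref{lem key} with $q=2$, $a_j:=2^{i\sigma}\|\chi_{2^iB_j}\|_{\vp}^{-1}\|\chi_{B_j}\|_{\vp}\,|2^iB_j|^{-1/2}S_L(m_j)\chi_{U_i(B_j)}$ (which satisfies $\supp a_j\st 2^iB_j$ and $\|a_j\|_{L^2}\le|2^iB_j|^{1/2}$), followed by Lemma \ref{lem 3} to replace $\|\chi_{2^iB_j}\|_{\vp}$ back by $\|\chi_{B_j}\|_{\vp}$ at the cost of an extra $2^{in\delta/p_-}$. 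This bounds the $i$-th term by $C2^{-i(\sigma-n\delta/p_-)}\ca(\{\lz_j\},\{B_j\})$, and summing the geometric series in $i$ finishes the proof; the density of $\mol$ in $\mhp$ then yields the claimed norm inequality on all of $\mhp$.

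The main obstacle I anticipate is the local estimate for $S_L(m)\chi_{U_i(B)}$: one must carefully track, for each pair of annuli (the source annulus $U_l(B)$ where the mass of $g_M$ lives and the target annulus $U_i(B)$), the interplay between the Gaussian factor $e^{-c\,\mathrm{dist}(U_l(B),U_i(B))^2/t^2}$, the scale constraint $t\lesssim 2^i r_B$, and the powers $r_B^{2M}$, $t^{-4M}$ coming from $m=r_B^{2M}L^Mg_M$ and $(t^2L)^ke^{-t^2L}$ — and then to carry out the $dt/t$ integration and the double sum over $l$ and $i$ so that the net decay is genuinely summable. This is where the precise roles of $M$ and $\vez$ enter and where the estimates, as the authors warn, become "subtler and more careful" than in the pointwise-kernel setting of \cite{yzo15}; the Fefferman–Stein inequality (Lemma \ref{lem fs}) hidden inside Lemma \ref{lem key} is what ultimately absorbs the variable exponent, so the real work is purely the $L^2$ off-diagonal bookkeeping.
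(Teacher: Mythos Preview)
Your overall strategy matches the paper's, and the variable-exponent endgame (Lemma~\ref{lem key} plus Fefferman--Stein via Lemma~\ref{lem fs}) is correct. The gap is in the local ($t<2^{i-2}r_B$) estimate. Writing $m=r_B^{2M}L^Mg_M$ there introduces a factor $(r_B/t)^{4M}$ inside the Lusin integral; combined with the measure $dt/t^{n+1}$ this gives a non-integrable singularity at $t=0$ unless Davies--Gaffney decay compensates it. When the source annulus $U_l(B)$ is adjacent to the target $U_i(B)$ (say $|l-i|\le2$) one has $\dist(U_l(B),U_i(B))=0$, hence no Gaussian factor, and $\int_0^{2^{i-2}r_B}(r_B/t)^{4M}\,dt/t^{n+1}=\infty$. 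No bookkeeping fixes this. The paper does \emph{not} use $g_M$ for the local part; instead it writes $m=m\chi_{\wz{S}_i(B)}+m\chi_{[\wz{S}_i(B)]^\com}$, handles the near piece via the $L^2$-boundedness of $S_L$ together with the molecular bound $\|m\|_{L^2(\wz{S}_i(B))}\ls 2^{-i\varepsilon}|2^iB|^{1/2}\|\chi_B\|_{\vp}^{-1}$, and invokes Davies--Gaffney only for the far piece. This is where $\varepsilon>n/p_-$ actually enters --- not in the global part, whose decay comes from $M$ via $t^{-4M}$ integrated over $t\ge2^{i\eta}r_B$; you have the roles of $M$ and $\varepsilon$ reversed.

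A related point: the paper splits at $t=2^{i\eta}r_B$ with $\eta\in(0,1)$ chosen so that $\eta(2M+n/2)>n/p_-$, not at $t\sim 2^ir_B$. With $\eta=1$ the Davies--Gaffney factor $e^{-c(2^ir_B/t)^2}$ at the upper endpoint is only $O(1)$ and yields no $i$-decay for the far piece when applied to $m$ directly; taking $\eta<1$ produces an extra $e^{-c2^{2i(1-\eta)}}\ls 2^{-iN(1-\eta)}$ for arbitrary $N$, which is the mechanism the paper uses in its term ${\rm I}_1$ and which lets it avoid $g_M$ in the local part altogether.
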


\begin{proof}
Let $f\in\mol$. Then,
by Definition \ref{def m-Hardy}, we know that there exist $\{\lz_j\}_{j\in\nn}\st\mathbb{C}$
and a family $\{m_j\}_{j\in\nn}$ of $\size$-molecules,
associated with balls $\{B_j\}_{j\in\nn}$ of $\rn$, such that
\begin{equation}\label{eq 3.0}
f=\sum_{j=1}^\fz \lz_j m_j\ \ \text{in}\ \ L^2(\rn)
\end{equation}
and
\begin{equation}\label{eq 3.0x}
\|f\|_{\mhp}\sim\ca(\{\lz_j\}_{j\in\nn},\,\{B_j\}_{j\in\nn}).
\end{equation}
By \eqref{eq 3.0} and the fact that $S_L$ is bounded on $L^2(\rn)$
(see \eqref{eq-1}), we find that
\begin{align*}
\lim_{N\to\fz}\lf\|S_L(f)-S_L\lf(\sum_{j=1}^N\lz_j m_j\r)\r\|_{L^2(\rn)}=0,
\end{align*}
which implies that there exists a subsequence of $\{S_L(\sum_{j=1}^N\lz_j m_j)\}_{N\in\nn}$
(without loss of generality, we may use the same notation as the original sequence)
such that, for almost every $x\in\rn$,
\begin{align*}
S_L(f)(x)=\lim_{N\to\fz}S_L\lf(\sum_{j=1}^N\lz_j m_j\r)(x).
\end{align*}
Hence, for almost every $x\in\rn$, it holds true that
\begin{align*}
S_L(f)(x)\le \sum_{j=1}^\fz|\lz_j|S_L(m_j)(x)
=\sum_{j=1}^\fz\sum_{i=0}^\fz|\lz_j|S_L(m_j)(x)\chi_{U_i(B_j)}(x),
\end{align*}
where, for each $j\in\nn$ and $i\in\zz_+$, $U_i(B_j)$ is defined as in \eqref{eq ujb} with
$B$ replaced by $B_j$.
From this, Remark \ref{rem 1}(iii) and the fact that $p_-\in(0,\,1]$, it follows that
\begin{align}\label{eq 3.2}
\|S_L(f)\|_{\vp}^{p_-}
&=\lf\|[S_L(f)]^{p_-}\r\|_{L^{\frac{p(\cdot)}{p_-}}(\rn)}\noz\\
&\le \sum_{i=0}^\fz\lf\|\sum_{j=1}^\fz|\lz_j|^{p_-}
[S_L(m_j)\chi_{U_i(B_j)}]^{p_-}\r\|_{L^{\frac{p(\cdot)}{p_-}}(\rn)}\noz\\
&= \sum_{i=0}^\fz\lf\|\lf\{\sum_{j=1}^\fz|\lz_j|^{p_-}
[S_L(m_j)\chi_{U_i(B_j)}]^{p_-}\r\}^{\frac{1}{p_-}}\r\|^{p_-}_{\vp}.
\end{align}

To prove Proposition \ref{pro 1}, it suffices to show that
there exist positive constants $C$ and $\tz\in(\frac{n}{p_-},\,\fz)$ such that,
for any $i\in\zz_+$ and $\size$-molecule $m$, associated with ball $B:= B(x_B,\,r_B)$
with $x_B\in\rn$ and $r_B\in(0,\,\fz)$,
\begin{equation}\label{eq 3.1}
\|S_L(m)\|_{L^2(U_i(B))}\le C 2^{-i\tz}|2^iB|^{1/2}\lf\|\chi_B\r\|_{\vp}^{-1}.
\end{equation}
Indeed, by \eqref{eq 3.1}, we find that, for any $i\in\zz_+$ and $j\in\nn$,
\begin{align}\label{eq 3.3}
\lf\|2^{i\tz}\lf\|\chi_{B_j}\r\|_{\vp}S_L(m_j)\chi_{U_i(B_j)}\r\|_{L^2(\rn)}\ls |2^iB_j|^{\frac12}.
\end{align}
Notice that, for any $x\in\rn$,
\begin{align}\label{eq 3.3x}
\chi_{2^iB_j}(x)\le 2^{in}\cm(\chi_{B_j})(x),
\end{align}
where $\cm$ is the Hardy-Littlewood maximal function defined in \eqref{2.10x}.
Since $\tz\in(\frac{n}{p_-},\,\fz)$,
we can choose a positive constant $r\in(0,\,p_-)$ such that $\tz\in(\frac{n}r,\fz)$.
By this, \eqref{eq 3.3}, Lemmas \ref{lem key} and \ref{lem fs}, \eqref{eq 3.3x}  and
Remark \ref{rem 1}(iii), we conclude that
\begin{align*}
&\lf\|\lf\{\sum_{j=1}^\fz \lf[|\lz_j|S_L(m_j)\chi_{U_i(B_j)}\r]^{p_-}\r\}^{\frac{1}{p_-}}\r\|_{\vp}\\
&\hs\ls\lf\|\lf\{\sum_{j=1}^\fz
\lf[2^{-i\tz}\|\chi_{B_j}\|_{\vp}^{-1}|\lz_j|\chi_{2^iB_j}\r]^{p_-}\r\}^
{\frac{1}{p_-}}\r\|_{\vp}\\
&\hs\ls 2^{-i(\tz-\frac{n}{r})}\lf\|\lf\{\sum_{j\in\nn}
\lf[\cm\lf(\frac{|\lz_j|^r}{\|\chi_{B_j}\|_{\vp}^r}\chi_{B_j}\r)\r]^{\frac{p_-}{r}}\r\}^{\frac1{p_-}}
\r\|_{\vp}\\
&\hs\sim 2^{-i(\tz-\frac{n}{r})}\lf\|\lf\{\sum_{j\in\nn}
\lf[\cm\lf(\frac{|\lz_j|^r}{\|\chi_{B_j}\|_{\vp}^r}\chi_{B_j}\r)\r]^{\frac{p_-}{r}}\r\}^{\frac{r}{p_-}}
\r\|^{\frac1r}_{L^{\frac{p(\cdot)}{r}}(\rn)}\\
&\hs\ls 2^{-i(\tz-\frac{n}{r})}\lf\|\lf\{\sum_{j\in\nn}
\lf[\frac{|\lz_j|^r}{\|\chi_{B_j}\|_{\vp}^r}\chi_{B_j}\r]^{\frac{p_-}{r}}\r\}^{\frac{r}{p_-}}
\r\|^{\frac1r}_{L^{\frac{p(\cdot)}{r}}(\rn)}\\
&\hs\sim 2^{-i(\tz-\frac{n}{r})}\mathcal{A}(\{\lz_j\}_{j\in\nn},\,\{B_j\}_{j\in\nn}).
\end{align*}
From this, \eqref{eq 3.2}, \eqref{eq 3.0x} and the fact that
$\tz\in(\frac{n}r,\,\fz)$, we deduce that,
for any $f\in\mol$,
\begin{align*}
\|f\|_{\vhp}
&=\|S_L(f)\|_{\vp}
\ls\lf\{\sum_{i=0}^\fz 2^{-i(\tz-\frac{n}{r})}\r\}^{\frac{1}{p_-}}
\|f\|_{\mhp}
\sim\|f\|_{\mhp},
\end{align*}
which is the desired result.

Next, we prove \eqref{eq 3.1}. Indeed,
when $i\in\{0,\,\ldots,\,10\}$, since $S_L$ is bounded on $L^2(\rn)$
(see \eqref{eq-1}), by the definition of $(p(\cdot),M,\vez)_L$-molecules, we have
\begin{equation}\label{eq 3.y}
\|S_L(m)\|_{L^2(U_i(B))}\ls\|m\|_{L^2(\rn)}\ls|B|^{\frac12}\|\chi_B\|_{\vp}^{-1}.
\end{equation}
When $i\in\zz_+\cap[11,\,\fz)$, for any given $\eta\in(0,\,1)$, we write
\begin{align}\label{eq 3.7y}
\|S_L(m)\|_{L^2(U_i(B))}
&=\lf[\int_{U_i(B)}\int_0^\fz
\int_{B(x,\,t)}|t^2Le^{-t^2L}(m)(y)|^2\dydt\,dx\r]^{\frac12}\noz\\
&\le\lf[\int_{U_i(B)}\int_0^{2^{i\eta}r_B}
\int_{B(x,\,t)}|t^2Le^{-t^2L}(m)(y)|^2\dydt\,dx\r]^{\frac12}\noz\\
&\hs+\lf[\int_{U_i(B)}\int_{2^{i\eta}r_B}^\fz
\int_{B(x,\,t)}\cdots\,dx\r]^{\frac12}
=:{\rm I}+{\rm II}.
\end{align}

To estimate ${\rm II}$, by Remark \ref{rem Assu}(i), we find that,
for any $k\in\zz_+$ and $t\in(0,\,\fz)$, $(tL)^ke^{-tL}$ is bounded on $L^2(\rn)$.
From this, it follows that
\begin{align}\label{eq 3.4}
{\rm II}
&\le\lf[\int_{U_i(B)}\int_{2^{i\eta}r_B}^\fz
\int_\rn\lf|(t^2L)^{M+1}e^{-t^2L}\lf(L^{-M}(m)\r)(y)\r|^2\,dy\,\frac{dt}{t^{4M+n+1}}
\,dx\r]^{\frac12}\noz\\
&\ls\lf[\int_{U_i(B)}\int_{2^{i\eta}r_B}^\fz
\lf\|L^{-M}(m)\r\|_{L^2(\rn)}^2\,\frac{dt}{t^{4M+n+1}}\,dx\r]^{\frac12}\noz\\
&\ls\lf\|L^{-M}(m)\r\|_{L^2(\rn)}\lf[\int_{U_i(B)}\lf(\frac{1}{2^{i\eta}r_B}\r)
^{4M+n}\,dx\r]^{\frac12}\noz\\
&\ls 2^{-i\eta(2M+\frac{n}{2})}2^{\frac{in}{2}}\lf\|L^{-M}(m)\r\|_{L^2(\rn)}
r_B^{-2M}.
\end{align}
By the fact that $m$ is a $\size$-molecule, $\vez>\frac{n}{p_-}>\frac{n}2$, Remark \ref{r-0902}
and \eqref{eq 3.4}, we know that
\begin{align}\label{eq 3.7z}
{\rm II}\ls 2^{-i\eta(2M+\frac{n}2)}|2^iB|^{\frac12}\|\chi_B\|_{\vp}^{-1}.
\end{align}

To estimate ${\rm I}$, for any $i\in\zz_+\cap[11,\,\fz)$, let
\begin{align*}
S_i(B):=\lf(2^{i+1}B\r)\setminus \lf(2^{i-2}B\r)\ \ \ \
\text{and}\ \ \ \ \wz{S}_i(B):=\lf(2^{i+2}B\r)\setminus \lf(2^{i-3}B\r).
\end{align*}
If $t\in(0,\,2^{i\eta}r_B)$ and $x\in U_i(B)$, then it is easy to see that
$B(x,\,t)\st S_i(B)$.
From this, we deduce that
\begin{align}\label{eq 3.7x}
{\rm I}
&\le\lf[\int_{U_i(B)}\int_0^{2^{i\eta}r_B}\int_{S_i(B)}
\lf|t^2Le^{-t^2L}\lf(m\chi_{\lf[\wz{S}_i(B)\r]^\com}\r)(y)\r|^2\dydt\,dx\r]^{\frac12}\noz\\
&\hs+\lf[\int_{U_i(B)}\int_0^{2^{i\eta}r_B}\int_{B(x,\,t)}\lf|t^2Le^{-t^2L}
\lf(m\chi_{\wz{S}_i(B)}\r)(y)\r|^2\dydt\,dx\r]^{\frac12}\noz\\
&=:{\rm I}_1+{\rm I}_2.
\end{align}
For ${\rm I}_2$, notice that, for $i\in\zz_+\cap[11,\fz)$,
$$\wz S_i(B)\st \bigcup_{k=-2}^2 U_{i+k}(B),$$
then, by the boundedness of $S_L$ on $L^2(\rn)$ (see \eqref{eq-1}),
we obtain
\begin{align}\label{eq 3.7}
{\rm I}_2\le\lf\|S_L\lf(m\chi_{\wz{S}_i(B)}\r)\r\|_{L^2(\rn)}
\ls\lf\|m\chi_{\wz{S}_i(B)}\r\|_{L^2(\rn)}
\ls 2^{-i\vez}|2^iB|^{\frac12}\|\chi_B\|_{\vp}^{-1}.
\end{align}
For ${\rm I}_1$, by Remark \ref{rem Assu}(i),
we know that $\{tLe^{-tL}\}_{t>0}$ satisfies the Davies-Gaffney estimates
\eqref{eq-dg}. From this and the fact that
$\dist([\wz{S}_i(B)]^\com,\,S_i(B))\sim 2^ir_B,$
it follows that
\begin{align}\label{eq 3.6}
{\rm I}_1
&\ls\lf[\int_{U_i(B)}\int_0^{2^{i\eta}r_B}e^{-c\frac{(2^ir_B)^2}{t^2}}\|m\|^2_{L^2(\rn)}
\,\frac{dt}{t^{n+1}}\,dx\r]^{\frac12}\noz\\
&\ls\|m\|_{L^2(\rn)}\lf[\int_{U_i(B)}\int_0^{2^{i\eta}r_B}\lf(\frac{t}{2^ir_B}\r)^N\,
\frac{dt}{t^{n+1}}\,dx\r]^{\frac12}\noz\\
&\ls\|m\|_{L^2(\rn)}|2^iB|^{\frac12}2^{-\frac{i}{2}[N(1-\eta)+\eta n]}|B|^{-\frac12},
\end{align}
where $c$ is as in \eqref{eq tt} and $N\in(n+1,\,\fz)$ is determined later.
This, together with Remark \ref{r-0902} and \eqref{eq 3.6}, implies that
\begin{align*}
{\rm I}_1\ls 2^{-\frac{i}{2}[N(1-\eta)+\eta n]}|2^iB|^{\frac12}\|\chi_B\|_{\vp}^{-1}.
\end{align*}
Combining this, \eqref{eq 3.7}, \eqref{eq 3.7x}, \eqref{eq 3.7y} and
\eqref{eq 3.7z}, we find that, for any $\size$-molecule $m$
associated with ball $B\st\rn$ and $i\in\zz_+\cap[11,\,\fz)$,
\begin{align}\label{eq 3.y1}
\|S_L(m)\|_{L^2(U_i(B))}\ls 2^{-i\tz}|2^iB|^{1/2}\lf\|\chi_B\r\|_{\vp}^{-1},
\end{align}
where
\begin{align*}
\tz:=\min\lf\{\frac{1}{2}[N(1-\eta)+\eta n],\,\vez,\,\eta\lf(2M+\frac{n}{2}\r)\r\}.
\end{align*}
By the fact that $M\in(\frac n2[\frac1{p_-}-\frac12],\fz)$, we
can choose some $\eta\in(0,\,1)$ such that
$\eta(2M+\frac{n}{2})>\frac{n}{p_-}$.
Then, by taking $N:=\frac{2n}{(1-\eta)p_-}$ and
$\vez\in(\frac{n}{p_-},\,\fz)$,
we find that $\tz\in(\frac{n}{p_-},\,\fz)$,
which, together with \eqref{eq 3.y1} and \eqref{eq 3.y}, implies \eqref{eq 3.1}.
This finishes the proof of Proposition \ref{pro 1}.
\end{proof}

Let $M\in\nn$ and $L$ satisfy Assumptions \ref{as-a} and \ref{as-b}.
For any $F\in T^2(\rnn)$ and $x\in\rn$, define
\begin{equation}\label{eq pi}
\pi_{M,\,L}(F)(x):=\int_0^\fz(t^2L)^{M+1}e^{-t^2L}(F(\cdot,\,t))(x)\,\frac{dt}{t}.
\end{equation}

\begin{lemma}\label{lem 5}
Let $L$ satisfy Assumptions \ref{as-a} and \ref{as-b}
and $p(\cdot)\in \cp(\rn)$ with $p_+\in(0,\,1]$.
Assume that $A$ is a $(p(\cdot),\,\fz)$-atom associated with ball $B\st\rn$.
Then, for any $M\in\nn$ and $\vez\in(0,\,\fz)$, there exists a
positive constant $C_{(M,\,\vez)}$, depending on $M$ and $\vez$,
such that $C_{(M,\,\vez)}\pi_{M,\,L}(A)$
is a $\size$-molecule associated with the ball $B$.
\end{lemma}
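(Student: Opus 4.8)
The plan is to verify the two defining conditions of a $\size$-molecule for (a suitable constant multiple of) $\pi_{M,\,L}(A)$: namely that $\pi_{M,\,L}(A)\in R(L^M)$ and that, for every $k\in\{0,\dots,M\}$ and $j\in\zz_+$, one has the size estimate $\|(r_B^{-2}L^{-1})^k\pi_{M,\,L}(A)\|_{L^2(U_j(B))}\lesssim 2^{-j\vez}|2^jB|^{1/2}\|\chi_B\|_{\vp}^{-1}$. The membership in $R(L^M)$ is immediate from the formula \eqref{eq pi}, since $(t^2L)^{M+1}e^{-t^2L}=L^M\circ[t^{2(M+1)}Le^{-t^2L}]$ up to the harmless factor $t^{-2M}$, so $\pi_{M,\,L}(A)$ is formally $L^M$ applied to an $L^2$ function (one should check the relevant integral converges in $L^2$, which follows from $A\in T^2(\rnn)$, the $L^2$-boundedness of the operators $(t^2L)^{M+1}e^{-t^2L}$ from Remark \ref{rem Assu}(i), and a standard quadratic-estimate / duality argument against $T^2(\rnn)$ using Lemma \ref{lem 0}).

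For the size estimates, first I would reduce to the case $k=0$ after noting that applying $(r_B^{-2}L^{-1})^k$ to $\pi_{M,\,L}(A)$ produces $\int_0^\fz r_B^{-2k}(t^2L)^{M+1-k}e^{-t^2L}(A(\cdot,t))(x)\,t^{2k}\frac{dt}{t}$, which has the same structure as $\pi_{M-k,\,L}$ but with an extra weight $r_B^{-2k}t^{2k}$; since $A$ is supported in $\wh B$ one has $t\lesssim r_B$ on the support, so this weight is bounded and the estimate for general $k$ follows from the same scheme as $k=0$ (with $M$ replaced by $M-k\ge 0$). So it suffices to bound $\|\pi_{M,\,L}(A)\|_{L^2(U_j(B))}$. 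For $j\in\{0,1,2,3\}$ I would use the $L^2$-boundedness of $\pi_{M,\,L}$ on $T^2(\rnn)$ (again via Lemma \ref{lem 0}) together with the atom size condition $\|A\|_{T^2(\rnn)}\le |B|^{1/2}\|\chi_B\|_{\vp}^{-1}$ to get $\|\pi_{M,\,L}(A)\|_{L^2(\rn)}\lesssim |B|^{1/2}\|\chi_B\|_{\vp}^{-1}$, which is the claim up to the constant $2^{-j\vez}$ absorbed for bounded $j$.

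For $j\ge 4$ the main work begins: one fixes $g\in L^2(U_j(B))$ with $\|g\|_{L^2}=1$ and estimates $|\langle \pi_{M,\,L}(A),g\rangle| = |\iint_{\wh B} A(x,t)\,\ov{[t^{2}L^\ast]^{M+1}e^{-t^2L^\ast}(g)(x)}\,\frac{dx\,dt}{t}|$ by the Cauchy--Schwarz inequality in the form $\le \|A\|_{T^2(\rnn)}\cdot\|(x,t)\mapsto (t^2L^\ast)^{M+1}e^{-t^2L^\ast}(g)(x)\|_{T^2(\wh B)}$-type quantity, i.e. $\le \|A\|_{T^2(\rnn)}\,[\iint_{\wh B}|(t^2L^\ast)^{M+1}e^{-t^2L^\ast}(g)(x)|^2\frac{dx\,dt}{t}]^{1/2}$. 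On $\wh B$ we have $x\in B$ and $t\le r_B$, while $g$ is supported in $U_j(B)$ with $\dist(B,U_j(B))\sim 2^jr_B$, so the Davies--Gaffney estimates for $\{(t^2L^\ast)^{M+1}e^{-t^2L^\ast}\}_{t>0}$ (Remark \ref{rem Assu}(i) and (ii)) give a Gaussian factor $e^{-c(2^jr_B)^2/t^2}$ which, integrated against $\frac{dt}{t}$ over $t\in(0,r_B)$, yields a decay of the form $2^{-jN}$ for any prescribed $N$ (by bounding the Gaussian by $(t/(2^jr_B))^N$). Combining this with $\|A\|_{T^2(\rnn)}\le |B|^{1/2}\|\chi_B\|_{\vp}^{-1}$ and $|2^jB|^{1/2}=2^{jn/2}|B|^{1/2}$, we obtain $\|\pi_{M,\,L}(A)\|_{L^2(U_j(B))}\lesssim 2^{-jN}|B|^{1/2}\|\chi_B\|_{\vp}^{-1}\lesssim 2^{-j(N-n/2)}|2^jB|^{1/2}\|\chi_B\|_{\vp}^{-1}$, and choosing $N$ large enough (depending on $\vez$) gives the required $2^{-j\vez}$ decay. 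The constant $C_{(M,\vez)}$ is then the reciprocal of the implicit constant in these bounds. The main obstacle I anticipate is handling the off-diagonal decay carefully: one must split the $t$-integral (or use the full-line $L^2(\rnn)$ off-diagonal bound with the Gaffney estimate combined over dyadic $t$-scales) and be attentive that the integral $\int_0^{r_B} e^{-c(2^jr_B)^2/t^2}\frac{dt}{t}$ really does produce arbitrarily fast decay in $j$ — this is where one exploits that $t$ is bounded by $r_B$ on $\wh B$, which is precisely what the atomic support condition $\supp A\subset\wh B$ buys us; for the general-$k$ case one must additionally check that the extra weight $(t/r_B)^{2k}$ does not destroy this.
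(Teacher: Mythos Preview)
Your proposal is correct and follows essentially the same approach as the paper: both use duality against a test function $g$ supported in $U_j(B)$, exploit $\supp A\subset\wh B$ (hence $t\le r_B$) to control the powers of $t/r_B$, and then apply the Davies--Gaffney estimates for $\{(t^2L^\ast)^{k+1}e^{-t^2L^\ast}\}_{t>0}$ from Remark \ref{rem Assu}(i),(ii) to obtain arbitrary polynomial decay $2^{-jN}$ from the Gaussian factor. The only cosmetic difference is that the paper handles all $k\in\{0,\dots,M\}$ simultaneously via the pairing $\langle (r_B^2L)^k L^{-M}(m),g\rangle$ (your observation that $(t/r_B)^{2k}\le1$ on $\wh B$ is exactly what the paper uses in passing from \eqref{eq x3.0} to \eqref{eq x3.2}), whereas you phrase it as a reduction to $k=0$; the substance is identical.
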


\begin{proof}
Let $A$ be a $(p(\cdot),\,\fz)$-atom associated with ball
$B:= B(x_B,\,r_B)\st\rn$ for some $x_B\in\rn$ and $r_B\in(0,\,\fz)$.
Then we know that
\begin{align}\label{eq x3.1}
\|A\|_{T^2(\rnn)}\le |B|^{\frac12}\|\chi_B\|_{\vp}^{-1}.
\end{align}
Let
\begin{equation}\label{eq xx}
m:=\pi_{M,\,L}(A)\ \ \text{and}\ \ b:=L^{-M}(m).
\end{equation}

Next, we show that $m$ is a $\size$-molecule associated with $B$, up to a harmless constant multiple.
Indeed, when $k\in\{0,\,\ldots,\,M\}$, by \eqref{eq xx} and \eqref{eq pi}, we find that,
for any $g\in L^2(\rn)$,
\begin{align}\label{eq x3.0}
&\int_\rn(r_B^2L)^k(b)(x)\ov{g(x)}\,dx\noz\\
&\hs=\int_\rn\int_0^\fz r_B^{2k}t^{2(M+1)}L^{k+1}e^{-t^2L}(A(\cdot,\,t))(x)\ov{g(x)}\,\frac{dt}{t}\,dx\noz\\
&\hs=\int_0^\fz\int_\rn r_B^{2k}t^{2(M+1)}A(x,\,t)
\ov{{(L^\ast)}^{k+1}e^{-t^2L^\ast}(g)(x)}\,dx\,\frac{dt}{t}.
\end{align}
From this, the fact that $\supp A\st\wh{B}$, Lemma \ref{lem 0},
Remark \ref{rem Assu}(ii), \eqref{eq-1} and \eqref{eq x3.1},
we deduce that,
for any $k\in\{0,\,\ldots,\,M\}$ and $g\in L^2(\rn)$,
\begin{align}\label{eq x3.2}
\lf|\int_\rn(r_B^2L)^k(b)(x)\ov{g(x)}\,dx\r|
&\le r_B^{2M}\iint_{\wh{B}}|A(x,\,t)|\lf|(t^2L^\ast)^{k+1}e^{-t^2L^\ast}(g)(x)\r|\,dx\,\frac{dt}{t}\noz\\
&\le r_B^{2M}\|A\|_{T^2(\rnn)}\lf\|(t^2L^\ast)^{k+1}e^{-t^2L^\ast}(g)\r\|_{T^{2}(\rnn)}\noz\\
&= r_B^{2M}\|A\|_{T^2(\rnn)}\|S_{L^\ast,\,k+1}(g)\|_{L^{2}(\rn)}\noz\\
&\ls r_B^{2M}|B|^{\frac12}\|\chi_B\|_{\vp}^{-1}\|g\|_{L^{2}(\rn)},
\end{align}
which implies that
\begin{align*}
\lf\|(r_B^2L)^k(b)\r\|_{L^2(\rn)}\ls r_B^{2M}|B|^{\frac12}\|\chi_B\|_{\vp}^{-1}.
\end{align*}
By this and \eqref{eq xx}, we conclude that, for any $j\in\{0,\,1,\,2\}$,
\begin{align}\label{eq x3.4}
\lf\|(r_B^{-2}L^{-1})^k(m)\r\|_{L^2(U_j(B))}\ls|B|^{\frac12}\|\chi_B\|_{\vp}^{-1}.
\end{align}

When $k\in\{0,\,\ldots,\,M\}$, from \eqref{eq x3.0}, Remark \ref{rem Assu}(ii),
\eqref{eq-1} and \eqref{eq x3.1},
we deduce that, for any $j\in\zz_+\cap[3,\,\fz)$ and
$g\in L^2(\rn)$ with $\supp g\st\ujb$,
\begin{align}\label{eq x3.3}
&\lf|\int_{\ujb}(r_B^2L)^k(b)(x)\ov{g(x)}\,dx\r|\noz\\
&\hs\le r_B^{2M}\iint_{\wh{B}}|A(x,\,t)|\lf|(t^2L^\ast)^{k+1}e^{-t^2L^\ast}(g)(x)\r|\,dx\,\frac{dt}{t}\noz\\
&\hs\le
r_B^{2M}\|A\|_{T^2(\rnn)}\lf\|(t^2L^\ast)^{k+1}e^{-t^2L^\ast}(g)\chi_{\wh{B}}\r\|_{T^{2}(\rnn)}\noz\\
&\hs\ls r_B^{2M}|B|^{\frac12}\|\chi_B\|_{\vp}^{-1}
\lf\|(t^2L^\ast)^{k+1}e^{-t^2L^\ast}(g)\chi_{\wh{B}}\r\|_{T^{2}(\rnn)}.
\end{align}
By the H\"{o}lder inequality and Remark \ref{rem Assu}(ii),
we find that
\begin{align*}
&\lf\|(t^2L^\ast)^{k+1}e^{-t^2L^\ast}(g)\chi_{\wh{B}}\r\|_{T^{2}(\rnn)}\\
&\hs=\lf[\int_{\rn}\iint_{\bgz(x)}\lf|(t^2L^\ast)^{k+1}e^{-t^2L^\ast}(g)(y)
\chi_{\wh{B}}(y,\,t)\r|^2\dydt\,dx\r]^{\frac{1}{2}}\\
&\hs\le\lf[\int_{B}\int_0^{r_B}\int_{B(x,\,t)\cap B}
\lf|(t^2L^\ast)^{k+1}e^{-t^2L^\ast}(g)(y)\r|^2\dydt\,dx\r]^{\frac{1}{2}}\\
&\hs\le\lf[\int_0^{r_B}\int_B\lf|(t^2L^\ast)^{k+1}e^{-t^2L^\ast}(g)(y)\r|^2
\,dy\,\frac{dt}{t}\r]^{\frac12}\\
&\hs\ls\lf[\int_0^{r_B}e^{-2c\frac{(2^jr_B)^2}{t^2}}
\|g\|^2_{L^2(\ujb)}\,\frac{dt}{t}\r]^{\frac12}\\
&\hs\ls\lf[\int_0^{r_B}\lf(\frac{t}{2^jr_B}\r)^{2N}
\,\frac{dt}{t}\r]^{\frac12}\|g\|_{L^{2}(\ujb)}\ls 2^{-jN}
\|g\|_{L^2(\ujb)},
\end{align*}
where the positive constant $c$ is as in \eqref{eq-dg} and $N\in\nn$ is determined below.
From this and \eqref{eq x3.3}, it follows that, for any $j\in\zz_+\cap[3,\,\fz)$ and
$g\in L^2(\rn)$ with $\supp g\st \ujb$,
\begin{align*}
\lf|\int_{\ujb}(r_B^2L)^k(b)(x)\ov{g(x)}\,dx\r|\ls 2^{-jN}r_B^{2M}|B|^{\frac12}\|\chi_B\|_{\vp}^{-1}\|g\|_{L^{2}(\ujb)}.
\end{align*}
This further implies that, for any $j\in\zz_+\cap[3,\fz)$
\begin{align*}
\lf\|(r_B^2L)^k(b)\r\|_{L^2(\ujb)}\ls
2^{-j(N+\frac{n}2)}r_B^{2M}|2^jB|^{\frac12}\|\chi_B\|_{\vp}^{-1}.
\end{align*}
By choosing some $N\in\nn$ such that $N+n/2>\varepsilon$ and \eqref{eq xx}, we conclude that,
for any $j\in\zz_+\cap[3,\fz)$,
\begin{align}\label{3.31x}
\lf\|(r_B^{-2}L^{-1})^k(m)\r\|_{L^2(\ujb)}\ls 2^{-j\varepsilon}|2^jB|^{\frac12}\|\chi_B\|_{\vp}^{-1}.
\end{align}

Combining \eqref{3.31x} and \eqref{eq x3.4}, we know that $m=\pi_{M,\,L}(A)$
is a $(p(\cdot),\,M,\,\varepsilon)_L$-molecule associated with $B$, up to a harmless constant
multiple. This finishes the proof of Lemma \ref{lem 5}.
\end{proof}

\begin{proposition}\label{pro 2}
Let $L$ satisfy Assumptions \ref{as-a} and \ref{as-b}
and $p(\cdot)\in C^{\log}(\rn)$ with $p_+\in(0,\,1]$.
Assume $M\in\nn$ and $\vez\in(0,\,\fz)$. Then, for any $f\in \vhp\cap L^2(\rn)$,
there exist $\{\lz_j\}_{j\in\nn}\st\cc$ and a family $\{m_j\}_{j\in\nn}$ of
$\size$-molecules, associated with balls $\{B_j\}_{j\in\nn}$ of $\rn$, such that
$f=\sum_{j=1}^\fz \lz_jm_j$ in $L^2(\rn)$ and
\begin{align*}
\ca(\{\lz_j\}_{j\in\nn},\,\{B_j\}_{j\in\nn})\le C\|f\|_{\vhp},
\end{align*}
where the positive constant $C$ is independent of $f$.
\end{proposition}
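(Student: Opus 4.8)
The strategy is the standard "square function $\to$ tent space $\to$ molecular decomposition" route, now carried out in the variable-exponent setting. Let $f\in\vhp\cap L^2(\rn)$. First I would lift $f$ to the upper half-space by setting
\[
F(x,\,t):=(t^2L)e^{-t^2L}(f)(x),\qquad (x,\,t)\in\rnn,
\]
so that, by the definition of the square function $S_L$ and Definition \ref{def vhp}, $A(F)=S_L(f)$ and hence $F\in T^{p(\cdot)}(\rnn)$ with $\|F\|_{T^{p(\cdot)}(\rnn)}=\|f\|_{\vhp}$. Moreover, since $f\in L^2(\rn)$, the $L^2$-boundedness of $S_{L,\,1}$ recorded in \eqref{eq-1} shows $F\in T^2(\rnn)$ as well. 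Then I would invoke the $H_\fz$-functional calculus to write the Calder\'on reproducing formula
\[
f=c_M\int_0^\fz (t^2L)^{M+1}e^{-t^2L}\bigl[(t^2L)e^{-t^2L}(f)(\cdot,\,t)\bigr](x)\,\frac{dt}{t}
=c_M\,\pi_{M,\,L}(F)(x),
\]
with $c_M$ a suitable normalizing constant, the identity holding in $L^2(\rn)$ (this is where Assumption \ref{as-a}, via \cite[Theorem F]{adm96}, is used, exactly as in \cite{hlmmy11,hm09}); after absorbing $c_M$ into $F$ I may assume $f=\pi_{M,\,L}(F)$.

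Next I would apply the atomic decomposition of the variable tent space, Lemma \ref{lem 2}: since $F\in T^{p(\cdot)}(\rnn)$ there are $\{\lz_j\}_{j\in\nn}\st\cc$ and $(p(\cdot),\,\fz)$-atoms $\{a_j\}_{j\in\nn}$, supported in tents $\wh{B_j}$, with $F=\sum_{j\in\nn}\lz_j a_j$ and $\ca(\{\lz_j\}_j,\{B_j\}_j)\ls\|F\|_{T^{p(\cdot)}(\rnn)}=\|f\|_{\vhp}$. Because $F$ also lies in $T^2(\rnn)$, Remark \ref{rem 2}(i) guarantees the series converges in $T^2(\rnn)$ as well. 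Applying $\pi_{M,\,L}$ — which is bounded from $T^2(\rnn)$ to $L^2(\rn)$ by the computation in \eqref{eq x3.2} with $A$ replaced by a generic $T^2$-function (indeed $\|\pi_{M,\,L}(G)\|_{L^2}\ls\|G\|_{T^2(\rnn)}$ follows from Lemma \ref{lem 0}, Remark \ref{rem Assu}(ii) and \eqref{eq-1}) — and using this $T^2$-convergence, I get
\[
f=\pi_{M,\,L}(F)=\sum_{j\in\nn}\lz_j\,\pi_{M,\,L}(a_j)\quad\text{in }L^2(\rn).
\]
Now Lemma \ref{lem 5} tells us that each $C_{(M,\,\vez)}\pi_{M,\,L}(a_j)$ is a $\size$-molecule associated with $B_j$. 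Setting $m_j:=C_{(M,\,\vez)}\pi_{M,\,L}(a_j)$ and $\wz\lz_j:=\lz_j/C_{(M,\,\vez)}$ yields $f=\sum_{j\in\nn}\wz\lz_j m_j$ in $L^2(\rn)$ with $\ca(\{\wz\lz_j\}_j,\{B_j\}_j)=C_{(M,\,\vez)}^{-1}\ca(\{\lz_j\}_j,\{B_j\}_j)\ls\|f\|_{\vhp}$, which is exactly the assertion.

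The routine parts here are the two $L^2$ facts (the reproducing formula and the $T^2\to L^2$ boundedness of $\pi_{M,\,L}$), both of which are essentially contained in the already-proved Lemma \ref{lem 5} machinery and in \eqref{eq-1}. The genuine obstacle — and the reason one must be careful — is the interchange of $\pi_{M,\,L}$ with the infinite sum: one needs the tent-space decomposition of Lemma \ref{lem 2} to converge not merely in $T^{p(\cdot)}(\rnn)$ but simultaneously in $T^2(\rnn)$, so that, after applying the $L^2$-bounded operator $\pi_{M,\,L}$, the resulting series still converges in $L^2(\rn)$ to $f$; this is precisely the content invoked from Remark \ref{rem 2}(i), and without the hypothesis $f\in L^2(\rn)$ (hence $F\in T^2(\rnn)$) the argument breaks down. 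Everything else — that the $\ca(\cdot,\cdot)$ bound is preserved under rescaling by the fixed constant $C_{(M,\,\vez)}$ — is immediate from the definition of $\ca$.
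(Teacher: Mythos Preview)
Your proposal is correct and follows essentially the same approach as the paper: lift $f$ to the tent space via $F(x,t)=t^2Le^{-t^2L}(f)(x)$, apply the atomic decomposition of $T^{p(\cdot)}(\rnn)$ from Lemma \ref{lem 2} together with Remark \ref{rem 2}(i) for the simultaneous $T^2$-convergence, use the Calder\'on reproducing formula (via Assumption \ref{as-a}) to write $f=C_{(M)}\pi_{M,L}(F)$, pass $\pi_{M,L}$ through the sum by its $T^2\to L^2$ boundedness, and finally invoke Lemma \ref{lem 5} to identify each $\pi_{M,L}(a_j)$ as a multiple of a $\size$-molecule. The order of steps and the key ingredients match the paper's proof exactly, and you have correctly isolated the one genuinely delicate point (the $T^2$-convergence needed to justify the interchange).
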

\begin{proof}
For any $f\in\vhp\cap L^2(\rn)$ and $(x,\,t)\in\rnn$,
let $F(x,\,t):=t^2L e^{-t^2L}(f)(x)$.
By \cite[Theorem F]{adm96}, we know that $t^2Le^{-t^2L}$ is bounded from $L^2(\rn)$
to $T^2(\rnn)$.
This, together with $f\in\vhp$, implies that $F\in T^2(\rnn)\cap T^{p(\cdot)}(\rnn)$.
Then, by Lemma \ref{lem 2} and Remark \ref{rem 2}(i), we conclude that
there exist $\{\lz_j\}_{j\in\nn}\st\cc$ and a family $\{a_j\}_{j\in\nn}$ of
$(p(\cdot),\,\fz)$-atoms, associated with balls $\{B_j\}_{j\in\nn}$ of $\rn$, such that
\begin{equation}\label{eq x3.5}
F=\sum_{j=1}^\fz \lz_j a_j\ \ \ \text{in}\ \ \ T^2(\rnn)\cap T^{p(\cdot)}(\rnn)
\end{equation}
and
\begin{align}\label{eq x3.5x}
\ca(\{\lz_j\}_{j\in\nn},\,\{B_j\}_{j\in\nn})\ls\|F\|_{T^{p(\cdot)}(\rnn)}\sim\|f\|_{\vhp}.
\end{align}
By the bounded holomorphic functional calculi for $L$, we know that
\begin{equation}\label{eq x3.6}
f=C_{(M)}\int_0^\fz(t^2L)^{M+1}e^{-t^2L}\lf(t^2Le^{-t^2L}(f)\r)\,\frac{dt}{t}
=\pi_{M,\,L}(F)\ \ \ \text{in}\ \ \ L^2(\rn),
\end{equation}
where $C_{(M)}$ is a positive constant such that
$C_{(M)}\int_0^\fz t^{2(M+2)}e^{-2t^2}\,\frac{dt}{t}=1.$
Via some arguments similar to those used in the proofs of \eqref{eq x3.0} and \eqref{eq x3.2},
we conclude that $\pi_{M,\,L}$ is bounded from $T^2(\rnn)$ to $L^2(\rn)$
(see also \cite[Proposition 4.5(i)]{bckyy13a}).
From this, \eqref{eq x3.6} and \eqref{eq x3.5}, it follows that
\begin{align}\label{eq x3.7}
f=C_{(M)}\pi_{M,\,L}\lf(\sum_{j=1}^\fz\lz_ja_j\r)
=C_{(M)}\sum_{j=1}^\fz\lz_j\pi_{M,\,L}(a_j)\ \ \ \text{in}\ \ \ L^2(\rn).
\end{align}
Noticing that, for any $M\in\nn$, $\vez\in(0,\,\fz)$ and $j\in\nn$,
$\pi_{M,\,L}(a_j)$ is a $\size$-molecule, up to a harmless constant multiple
(see Lemma \ref{lem 5}), by Definition \ref{def m-Hardy}, we know that
\eqref{eq x3.7} is a $(p(\cdot),\,M,\,\vez)$-molecular representation of $f$.
This, together with \eqref{eq x3.5x}, finishes the proof of Proposition \ref{pro 2}.
\end{proof}

Let $L$ satisfy Assumptions \ref{as-a} and \ref{as-b}
and $p(\cdot)\in \cp(\rn)$ with $p_+\in(0,\,1]$.
For any $M\in\nn$ and $\vez\in(0,\,\fz)$,
define $H_{L,\,{\rm fin},\,M}^{p(\cdot),\,\vez}(\rn)$ as the \emph{set of all
finite linear combinations of $\size$-molecules}.

We have the following proposition which plays a key role
in the proof of Theorem \ref{thm 2} below.
\begin{proposition}\label{pro 3}
Let $L$ satisfy Assumptions \ref{as-a} and \ref{as-b}
and $p(\cdot)\in \cp(\rn)$ with $p_+\in(0,\,1]$.
Assume $M\in\nn$ and $\vez\in(0,\,\fz)$.
Then $H_{L,\,{\rm fin},\,M}^{p(\cdot),\,\vez}(\rn)$ is dense in $\mhp$ with respect to the quasi-norm
$\|\cdot\|_{\mhp}$.
\end{proposition}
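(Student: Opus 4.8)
The plan is to prove that, for each $f\in\mol$, the partial sums of a molecular $(p(\cdot),\,M,\,\vez)$-representation of $f$ converge to $f$ in $\|\cdot\|_{\mhp}$. Since, by Definition \ref{def m-Hardy}, $\mhp$ is the completion of $\mol$ with respect to $\|\cdot\|_{\mhp}$, and since such partial sums belong to $H_{L,\,{\rm fin},\,M}^{p(\cdot),\,\vez}(\rn)\st\mol$, this will yield the desired density. To set the stage, I would first note that $\|\cdot\|_{\mhp}$ is $p_-$-subadditive on $\mol$, that is, $\|g+h\|_{\mhp}^{p_-}\le\|g\|_{\mhp}^{p_-}+\|h\|_{\mhp}^{p_-}$ for all $g,\,h\in\mol$: concatenating molecular representations of $g$ and $h$ yields one of $g+h$, and then Remark \ref{rem 1}(iii) with exponent $1/p_-$ (recall $p_-\le p_+\le1$, so $p(\cdot)/p_-\ge1$ and $L^{p(\cdot)/p_-}(\rn)$ is a Banach space), together with the triangle inequality in $L^{p(\cdot)/p_-}(\rn)$, gives the claim. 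In particular, $d(g,\,h):=\|g-h\|_{\mhp}^{p_-}$ is a translation-invariant metric on $\mol$ inducing the topology of $\mhp$, so it suffices to approximate each $f\in\mol$ in $d$ by elements of $H_{L,\,{\rm fin},\,M}^{p(\cdot),\,\vez}(\rn)$.

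Fix such an $f$, together with a molecular $(p(\cdot),\,M,\,\vez)$-representation $f=\sum_{j=1}^\fz\lz_jm_j$ converging in $L^2(\rn)$, where each $m_j$ is a $\size$-molecule associated with a ball $B_j$ and $\ca(\{\lz_j\}_{j\in\nn},\,\{B_j\}_{j\in\nn})<\fz$. For $N\in\nn$, set $f_N:=\sum_{j=1}^N\lz_jm_j\in H_{L,\,{\rm fin},\,M}^{p(\cdot),\,\vez}(\rn)$. Since the series converges in $L^2(\rn)$, so does the tail $\sum_{j>N}\lz_jm_j$, whose $L^2$-limit is $f-f_N$; moreover, by the monotonicity of $\|\cdot\|_{\vp}$, $\ca(\{\lz_j\}_{j>N},\,\{B_j\}_{j>N})\le\ca(\{\lz_j\}_{j\in\nn},\,\{B_j\}_{j\in\nn})<\fz$, so this tail is a genuine molecular $(p(\cdot),\,M,\,\vez)$-representation of $f-f_N$. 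Hence $f-f_N\in\mol$ and, by Definition \ref{def m-Hardy},
\begin{align*}
\|f-f_N\|_{\mhp}&\le\ca(\{\lz_j\}_{j>N},\,\{B_j\}_{j>N})\\
&=\lf\|\lf\{\sum_{j>N}\lf[\frac{|\lz_j|\chi_{B_j}}{\|\chi_{B_j}\|_{\vp}}\r]^{p_-}\r\}^{\frac1{p_-}}\r\|_{\vp}.
\end{align*}

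It remains to show the right-hand side tends to $0$ as $N\to\fz$, which is the step requiring the most care. Let
\begin{align*}
G&:=\lf\{\sum_{j\in\nn}\lf[\frac{|\lz_j|\chi_{B_j}}{\|\chi_{B_j}\|_{\vp}}\r]^{p_-}\r\}^{\frac1{p_-}}\quad\text{and}\\
G_N&:=\lf\{\sum_{j>N}\lf[\frac{|\lz_j|\chi_{B_j}}{\|\chi_{B_j}\|_{\vp}}\r]^{p_-}\r\}^{\frac1{p_-}}.
\end{align*}
The hypothesis $\ca(\{\lz_j\}_{j\in\nn},\,\{B_j\}_{j\in\nn})<\fz$ says precisely that $G\in\vp$; in particular $G<\fz$ almost everywhere, so $G_N^{p_-}=\sum_{j>N}[|\lz_j|\chi_{B_j}/\|\chi_{B_j}\|_{\vp}]^{p_-}\to0$ almost everywhere as $N\to\fz$, with $0\le G_N\le G$ pointwise. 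By Remark \ref{rem 1}(iii), $\|G_N\|_{\vp}^{p_-}=\|G_N^{p_-}\|_{L^{p(\cdot)/p_-}(\rn)}$; since $G_N^{p_-}\to0$ almost everywhere, $0\le G_N^{p_-}\le G^{p_-}$, and $G^{p_-}\in L^{p(\cdot)/p_-}(\rn)$ (again because $G\in\vp$), the dominated convergence theorem in variable Lebesgue spaces (see, e.g., \cite{cf13} or \cite{dhhr11}) yields $\|G_N\|_{\vp}\to0$, hence $d(f,\,f_N)=\|f-f_N\|_{\mhp}^{p_-}\to0$. Combining this with the density of $\mol$ in $\mhp$ and with the $p_-$-triangle inequality for $d$ completes the proof. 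The one genuinely delicate point is this final convergence: one must check that the natural dominating function $G^{p_-}$ indeed belongs to $L^{p(\cdot)/p_-}(\rn)$ — which is exactly the assumption $\ca(\{\lz_j\}_{j\in\nn},\,\{B_j\}_{j\in\nn})<\fz$ — so that the variable-exponent dominated convergence theorem applies; the remaining steps are routine bookkeeping with the definitions.
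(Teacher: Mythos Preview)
Your proof is correct and follows essentially the same approach as the paper's: approximate an arbitrary element of $\mhp$ first by some $f\in\mol$, then approximate $f$ by the partial sums $f_N$ of a molecular representation, bounding $\|f-f_N\|_{\mhp}$ by the tail $\ca(\{\lz_j\}_{j>N},\{B_j\}_{j>N})$ and applying the dominated convergence theorem in $L^{p(\cdot)/p_-}(\rn)$. You are simply more explicit than the paper about two points the authors leave implicit: the $p_-$-subadditivity of $\|\cdot\|_{\mhp}$ (needed to combine the two approximations) and the verification that the tail series is itself a valid molecular representation of $f-f_N$.
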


\begin{proof}
Let $g\in\mhp$. Then, by Definition \ref{def m-Hardy},
we know that, for any $\delta\in(0,\,\fz)$, there exists
a function $f\in\mol$ such that
\begin{align}\label{3.35x}
\|g-f\|_{\mhp}\le \delta/2.
\end{align}
By the definition of $\mol$, we find that there exist $\{\lz_j\}_{j\in\nn}\st\cc$
and a family $\{m_j\}_{j\in\nn}$ of $\size$-molecules, associated with balls
$\{B_j\}_{j\in\nn}$ of $\rn$, such that $f=\sum_{j=1}^\fz \lz_jm_j$ in $L^2(\rn)$
and $\ca(\{\lz_j\}_{j\in\nn},\,\{B_j\}_{j\in\nn})<\fz$.
Now, for any $N\in\nn$, let $f_N:=\sum_{j=1}^N \lz_jm_j$.
Then we have
\begin{align}\label{eq x3.8}
\|f-f_N\|_{\mhp}
&\le\ca\lf(\{\lz\}_{j=N+1}^\fz,\,\{B_j\}_{j=N+1}^\fz\r)\noz\\
&=\lf\|\sum_{j=N+1}^\fz\lf[\frac{|\lz_j|\chi_{B_j}}{\|\chi_{B_j}\|_{\vp}}\r]^{p_-}
\r\|_{L^{\frac{p(\cdot)}{p_-}}(\rn)}^{\frac{1}{p_-}}.
\end{align}
Since
\begin{align*}
\ca(\{\lz_j\}_{j\in\nn},\,\{B_j\}_{j\in\nn})
=\lf\|\sum_{j=1}^\fz\lf[\frac{|\lz_j|\chi_{B_j}}{\|\chi_{B_j}\|_{\vp}}\r]^{p_-}
\r\|_{L^{\frac{p(\cdot)}{p_-}}(\rn)}^{\frac{1}{p_-}}<\fz,
\end{align*}
it follows that, for almost every $x\in\rn$,
\begin{equation*}
\lim_{N\to\fz}\sum_{j=N+1}^\fz\lf[\frac{|\lz_j|\chi_{B_j}(x)}{\|\chi_{B_j}\|_{\vp}}\r]^{p_-}=0.
\end{equation*}
Combining this and the dominated convergence theorem
(see, for example, \cite[Lemma 3.2.8]{dhhr11}), we have
\begin{align*}
\lim_{N\to\fz}\lf\|\sum_{j=N+1}^\fz\lf[\frac{|\lz_j|\chi_{B_j}}{\|\chi_{B_j}\|_{\vp}}\r]^{p_-}
\r\|_{L^{\frac{p(\cdot)}{p_-}}(\rn)}^{\frac{1}{p_-}}=0.
\end{align*}
By this and \eqref{eq x3.8}, we conclude that $\|f-f_N\|_{\mhp}\to 0$ as $N\to\fz$.
Hence, we find that, for any $\delta\in(0,\,\fz)$, there exists some $N_0\in\nn$
such that, for any $N>N_0$,
\begin{align}\label{3.36x}
\|f-f_N\|_{\mhp}<\delta/2.
\end{align}
Obviously, for any $N\in\nn$, $f_N\in H_{L,\,{\rm fin},\,M}^{p(\cdot),\,\vez}(\rn)$.
From \eqref{3.35x} and \eqref{3.36x}, we deduce that, for any $\delta\in(0,\,\fz)$, when $N>N_0$,
\begin{align*}
\|g-f_N\|_{\mhp}\le\|g-f\|_{\mhp}+\|f-f_N\|_{\mhp}<\delta.
\end{align*}
Thus, $H_{L,\,{\rm fin},\,M}^{p(\cdot),\,\vez}(\rn)$ is dense in $\mhp$
with respect to the quasi-norm $\|\cdot\|_{\mhp}$.
This finishes the proof of Proposition \ref{pro 3}.
\end{proof}

By Propositions \ref{pro 1} and \ref{pro 2},
we immediately conclude Theorem \ref{thm 1} below, which
establishes the molecular characterization of $\vhp$.
Since the proof is obvious, we omit the details.
\begin{theorem}\label{thm 1}
Let $L$ satisfy Assumptions \ref{as-a} and \ref{as-b}
and $p(\cdot)\in C^{\log}(\rn)$ with $p_+\in(0,\,1]$.
Assume $M\in\nn\cap(\frac n2[\frac{1}{p_-}-\frac12],\,\fz)$
and $\varepsilon\in(\frac{n}{p_-},\,\fz)$.
Then $\mhp$ and $\vhp$ coincide with equivalent quasi-norms.
\end{theorem}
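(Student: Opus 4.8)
The plan is to show that, under the stated hypotheses, the two subspaces being completed coincide as sets and carry equivalent quasi-norms, so that the theorem follows by the standard fact that completing a space with respect to either of two equivalent quasi-norms produces canonically isomorphic spaces. Write $\cd:=\{f\in L^2(\rn):\ \|S_L(f)\|_{\vp}<\fz\}$, so that $\vhp$ is by Definition \ref{def vhp} the completion of $\cd$, while by Definition \ref{def m-Hardy} $\mhp$ is the completion of $\mol$. Note that the hypotheses $M\in\nn\cap(\frac n2[\frac1{p_-}-\frac12],\fz)$ and $\vez\in(\frac n{p_-},\fz)$ are exactly those of Proposition \ref{pro 1}, while Proposition \ref{pro 2} holds for all $M\in\nn$ and $\vez\in(0,\fz)$; hence both propositions are at our disposal.

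First I would check $\mol\st\cd$ together with $\|f\|_{\vhp}\ls\|f\|_{\mhp}$: if $f\in\mol$, then by Definition \ref{def m-Hardy} it has a molecular $(p(\cdot),M,\vez)$-representation converging in $L^2(\rn)$, so $f\in L^2(\rn)$, and Proposition \ref{pro 1} yields $\|S_L(f)\|_{\vp}\ls\|f\|_{\mhp}<\fz$, whence $f\in\cd$ and $\|f\|_{\vhp}=\|S_L(f)\|_{\vp}\ls\|f\|_{\mhp}$. Conversely I would check $\cd\st\mol$ together with $\|f\|_{\mhp}\ls\|f\|_{\vhp}$: given $f\in\cd$ (equivalently, $f\in\vhp\cap L^2(\rn)$), Proposition \ref{pro 2} produces $\{\lz_j\}_{j\in\nn}\st\cc$ and a family $\{m_j\}_{j\in\nn}$ of $\size$-molecules, associated with balls $\{B_j\}_{j\in\nn}$, with $f=\sum_{j=1}^\fz\lz_j m_j$ in $L^2(\rn)$ and $\ca(\{\lz_j\}_{j\in\nn},\,\{B_j\}_{j\in\nn})\ls\|f\|_{\vhp}<\fz$; this is a molecular $(p(\cdot),M,\vez)$-representation of $f$, so $f\in\mol$, and taking the infimum in the definition of $\|\cdot\|_{\mhp}$ gives $\|f\|_{\mhp}\le\ca(\{\lz_j\}_{j\in\nn},\,\{B_j\}_{j\in\nn})\ls\|f\|_{\vhp}$.

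Combining the two inclusions, $\mol=\cd$ as sets and $\|f\|_{\mhp}\sim\|f\|_{\vhp}$ for every $f$ in this common space; passing to completions with respect to these two equivalent quasi-norms then gives $\mhp=\vhp$ with equivalent quasi-norms. I do not expect a genuine obstacle in this last step, which is essentially formal; the only points requiring a little care are that $\mol$ and $\cd$ really are the subspaces being completed in Definitions \ref{def m-Hardy} and \ref{def vhp} (so that equivalence of quasi-norms on the common subspace transfers to the completions) and the harmless identification of ``$\vhp\cap L^2(\rn)$'' in Proposition \ref{pro 2} with the concrete space $\cd$. All the real work lives in Propositions \ref{pro 1} and \ref{pro 2} --- in particular in the per-molecule square-function bound \eqref{eq 3.1} and in Lemma \ref{lem 5}, which exhibits $\pi_{M,\,L}$ of a tent-space atom as a molecule of $\vhp$.
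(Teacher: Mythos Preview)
Your proposal is correct and follows exactly the approach the paper takes: the paper states that Theorem \ref{thm 1} is an immediate consequence of Propositions \ref{pro 1} and \ref{pro 2} and omits the details, while you have simply spelled out the standard completion argument that makes this immediate. The identifications and inclusions you verify are precisely the ones implicit in the paper's one-line justification.
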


\begin{remark}\label{rem-atom}
\begin{enumerate}
\item[(i)]
Notice that Hofmann et al. \cite[Theorem 4.1]{hlmmy11} established
the atomic characterization of the Hardy space $H^1_L(X)$ associated with
a non-negative self-adjoint operator $L$ (see also \cite[Theorem 5.1]{jy11}
for the atomic characterization of $H_L^p(X)$ with $p\in(0,\,1]$). In this article,
we can not obtain an atomic characterization of $\vhp$ similar to
\cite[Theorem 4.1]{hlmmy11} (or \cite[Theorem 5.1]{jy11}),
though we can establish the molecular characterization of $\vhp$ (see Proposition \ref{pro 2})
by using the atomic decomposition of tent spaces.
The intrinsic reason for this is that
the operator $L$ of this article may not be self-adjoint
which has been pointed out in the introduction of \cite{hlmmy11}.
More precisely, by Lemma \ref{lem 5}, we know that the operator $\pi_{M,\,L}$
only maps any $(p(\cdot),\,\fz)$-atom $A$ of $T^{p(\cdot)}(\rnn)$ into
a $\size$-molecule of $\vhp$, which has no compact support.
However, if the operator $L$ is non-negative self-adjoint,
by the finite speed propagation for the wave equation
(see \cite[Definition 3.3 and Lemma 3.5]{hlmmy11}),
we can further show that $\pi_{M,\,L}(A)$ has compact support and hence is
an atom of $\vhp$, the details being omitted.

\item[(ii)]
In particular, when $p(\cdot)\equiv p\in(0,\,1]$ is a constant and
$L$ satisfies Assumptions \ref{as-a} and \ref{as-b},
Theorem \ref{thm 1} coincides with \cite[Theorem 3.15]{dl13}
in the case when the underlying space $X:=\rn$.

\item[(iii)]
When $p(\cdot)\equiv 1$ and $L$ is a one-to-one non-negative self-adjoint
operator, from Theorem \ref{thm 1}, we deduce that, for any given
$M\in\nn\cap (\frac{n}4,\,\fz)$ and $\varepsilon\in(n,\,\fz)$,
$H_{L,\,M}^{1,\,\varepsilon}(\rn)$ and $H_L^1(\rn)$ coincide with equivalent quasi-norms,
which was already obtained in \cite[Corollary 5.3]{hlmmy11}
and the ranges of $M$ and $\varepsilon$ coincide with
those of \cite[Corollary 5.3]{hlmmy11}.
Moreover, when $p(\cdot)\equiv p\in(0,\,1]$, Theorem \ref{thm 1} was already obtained in
\cite[Theorem 5.1]{jy11}.

\item[(iv)]
If $p(\cdot)\equiv p\in(0,\,1]$ is a constant and $L$ is the second order divergence
form elliptic operator as in \eqref{eq op},
by Theorem \ref{thm 1}, we find that, for any given $M\in\nn\cap(\frac{n}2[\frac1p-\frac12],\,\fz)$
and $\varepsilon\in(\frac{n}p,\,\fz)$, $H_{L,\,M}^{p,\,\varepsilon}(\rn)$
and $H_L^p(\rn)$ coincide with equivalent quasi-norms.
This is just \cite[Theorem 3.5]{hmm11} and the ranges of $M$ and $\varepsilon$
coincide with those of \cite[Theorem 3.5]{hmm11}.
\end{enumerate}
\end{remark}

\begin{corollary}\label{cor 2}
Let $L$ satisfy Assumptions \ref{as-a} and \ref{as-b}
and $p(\cdot)\in C^{\log}(\rn)$ with $p_+\in(0,\,1]$.
Suppose $T$ is a linear operator, or a positive sublinear operator, which
is bounded on $L^2(\rn)$.
Let $M\in\nn\cap(\frac n2[\frac{1}{p_-}-\frac12],\,\fz)$
and $\varepsilon\in(\frac{n}{p_-},\,\fz)$.
Assume that there exist positive constants $C$ and $\tz\in(\frac{n}{p_-},\,\fz)$
such that, for any $(p(\cdot),\,M,\,\vez)_L$-molecule $m$, associated with ball $B$ of $\rn$,
and $j\in\zz_+$,
\begin{align*}
\|T(m)\|_{L^2(\ujb)}\le C2^{-j\tz}|2^jB|^{\frac12}\|\chi_B\|_{\vp}^{-1}.
\end{align*}
Then there exists a positive constant $C$ such that, for any $f\in\vhp$,
\begin{align}\label{eq cor-2}
\|T(f)\|_{\vp}\le C\|f\|_{\vhp}.
\end{align}
\end{corollary}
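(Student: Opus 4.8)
The plan is to combine the molecular characterization of $\vhp$ from Theorem \ref{thm 1} with the same scheme of estimates that proved Proposition \ref{pro 1}, simply replacing the square function $S_L$ by the abstract operator $T$. Since the hypothesis on $T$ is literally the conclusion \eqref{eq 3.1} that drove the proof of Proposition \ref{pro 1}, the argument should go through almost verbatim; the only genuine work is to justify passing from a molecular representation to a pointwise or quasi-norm control of $T(f)$, using the $L^2$-boundedness of $T$ to handle convergence.

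First I would fix $f\in\vhp$. By Theorem \ref{thm 1}, $\mhp=\vhp$ with equivalent quasi-norms for the given ranges of $M$ and $\vez$, so there exist $\{\lz_j\}_{j\in\nn}\st\cc$ and $(p(\cdot),\,M,\,\vez)_L$-molecules $\{m_j\}_{j\in\nn}$, associated with balls $\{B_j\}_{j\in\nn}$, such that $f=\sum_{j=1}^\fz\lz_jm_j$ in $L^2(\rn)$ and $\ca(\{\lz_j\}_{j\in\nn},\,\{B_j\}_{j\in\nn})\ls\|f\|_{\vhp}$. Using that $T$ is bounded on $L^2(\rn)$, the partial sums $T(\sum_{j=1}^N\lz_jm_j)$ converge to $T(f)$ in $L^2(\rn)$, hence (along a subsequence) almost everywhere, so that for almost every $x\in\rn$,
\begin{align*}
|T(f)(x)|\le\sum_{j=1}^\fz|\lz_j|\,|T(m_j)(x)|
=\sum_{j=1}^\fz\sum_{i=0}^\fz|\lz_j|\,|T(m_j)(x)|\chi_{U_i(B_j)}(x).
\end{align*}
(When $T$ is a positive sublinear operator this subadditivity is part of the hypothesis; when $T$ is linear it is immediate.) Then, exactly as in \eqref{eq 3.2}, using Remark \ref{rem 1}(iii) and $p_-\in(0,\,1]$, I would bound
\begin{align*}
\|T(f)\|_{\vp}^{p_-}\le\sum_{i=0}^\fz
\lf\|\lf\{\sum_{j=1}^\fz|\lz_j|^{p_-}\lf[T(m_j)\chi_{U_i(B_j)}\r]^{p_-}\r\}^{\frac1{p_-}}\r\|_{\vp}^{p_-}.
\end{align*}

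Next, for each fixed $i\in\zz_+$ I would invoke the molecular hypothesis: $\|T(m_j)\|_{L^2(U_i(B_j))}\le C2^{-i\tz}|2^iB_j|^{1/2}\|\chi_{B_j}\|_{\vp}^{-1}$, so that $2^{i\tz}\|\chi_{B_j}\|_{\vp}T(m_j)\chi_{U_i(B_j)}$, after normalization, is (a constant times) a function supported in $2^iB_j$ with $L^2$-norm controlled by $|2^iB_j|^{1/2}$. Choosing $r\in(0,\,p_-)$ with $\tz\in(n/r,\,\fz)$ — possible since $\tz\in(n/p_-,\,\fz)$ — and applying Lemma \ref{lem key} with $q=2$, then $\chi_{2^iB_j}\le 2^{in}\cm(\chi_{B_j})$, Remark \ref{rem 1}(iii) and the Fefferman–Stein inequality of Lemma \ref{lem fs}, I would obtain, just as in the displayed chain in the proof of Proposition \ref{pro 1},
\begin{align*}
\lf\|\lf\{\sum_{j=1}^\fz\lf[|\lz_j|T(m_j)\chi_{U_i(B_j)}\r]^{p_-}\r\}^{\frac1{p_-}}\r\|_{\vp}
\ls 2^{-i(\tz-\frac nr)}\ca(\{\lz_j\}_{j\in\nn},\,\{B_j\}_{j\in\nn}).
\end{align*}
Summing the geometric series $\sum_{i=0}^\fz 2^{-i(\tz-n/r)p_-}<\fz$ and using $\ca(\{\lz_j\}_{j\in\nn},\,\{B_j\}_{j\in\nn})\ls\|f\|_{\vhp}$ yields \eqref{eq cor-2}. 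The main obstacle is really a bookkeeping one: making sure the density/convergence step is legitimate for \emph{all} $f\in\vhp$ (not just those with a finite molecular representation), which is handled by the $L^2$-convergence of the molecular series together with $T$ bounded on $L^2(\rn)$, and by the fact that the resulting quasi-norm bound extends to the completion; no new estimate beyond those already in Proposition \ref{pro 1} is needed.
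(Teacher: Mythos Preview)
Your proposal is correct and follows essentially the same approach as the paper: the paper's proof simply notes that by Theorem \ref{thm 1} the set $\mol$ is dense in $\vhp$, so it suffices to prove \eqref{eq cor-2} for $f\in\mol$, after which the argument is declared ``a complete analogue of the proof of Proposition \ref{pro 1}'' with details omitted --- and those omitted details are precisely the chain of estimates you wrote out (pointwise subadditivity via $L^2$-convergence and a subsequence, the annular decomposition, Lemma \ref{lem key}, the bound $\chi_{2^iB_j}\le 2^{in}\cm(\chi_{B_j})$, Lemma \ref{lem fs}, and the geometric summation in $i$). The only minor presentational point is that your opening line ``fix $f\in\vhp$'' with an $L^2$-convergent molecular decomposition literally applies to $f\in\mol$ (equivalently $f\in\vhp\cap L^2(\rn)$ via Proposition \ref{pro 2}); you correctly flag this at the end and extend to the completion by density, which matches the paper's logic exactly.
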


\begin{proof}
By Theorem \ref{thm 1}, we know that
$\mol$ is dense in $\vhp$ with respect to the quasi-norm $\|\cdot\|_{\vhp}$.
Hence, to complete the proof of Corollary \ref{cor 2}, we only need to show
that, for all $f\in\mol$, \eqref{eq cor-2} holds true.
The remainder of the proof of Corollary \ref{cor 2} is a complete analogue of the proof
of Proposition \ref{pro 1}, the details being omitted.
This finishes the proof of Corollary \ref{cor 2}.
\end{proof}

\section{The duality of $\vhp$\label{s4}}
\hskip\parindent
Let $L$ satisfy Assumptions \ref{as-a} and \ref{as-b}.
In this section, we mainly consider the duality of $\vhp$.
To this end, motivated by \cite{hm09,hmm11}, we introduce the
following ${\rm BMO}$-type space $\dbmo$.
Here and hereafter, we denote by $L^\ast$ the \emph{adjoint operator} of $L$.

Let $p(\cdot)\in \cp(\rn)$ with $p_+\in(0,\,1]$ and
$L$ satisfy Assumptions \ref{as-a} and \ref{as-b}.
In what follows, let $\vec{0}_n$ be the \emph{origin} of $\rn$.
For any $M\in\nn$ and $\vez\in(0,\,\fz)$, define
\begin{align*}
\test:=\lf\{\mu:=L^M(\nu):\ \nu\in D(L^M),\ \|\mu\|_{\test}<\fz\r\},
\end{align*}
where $D(L^M)$ denotes the domain of $L^M$ and
\begin{align}\label{eq test}
\|\mu\|_{\test}
&:=\sup_{j\in\zz_+}2^{j\vez}\lf|B(\vec{0}_n,\,2^j)\r|^{-\frac12}\|\chi_{B(\vec{0}_n,\,1)}\|_{\vp}\noz\\
&\hs\times\sum_{k=0}^M\lf\|L^{-k}(\mu)\r\|_{L^2(U_j(B(\vec{0}_n,\,1)))}.
\end{align}
Let
\begin{align*}
\tez:=\bigcap_{\vez\in (0,\fz)}\lf(\test\r)^{\ast}.
\end{align*}
Here and hereafter, $(\test)^\ast$ denotes the \emph{dual space} of $\test$,
namely, the set of all the bounded linear functionals on $\vhp$ and, for any $f\in(\test)^\ast$
and $g\in\test$,
$\langle f,\,g\rangle_{\cm}$ denotes the duality between $(\test)^\ast$ and $\test$.

\begin{definition}\label{def bmo}
Let $p(\cdot)\in \cp(\rn)$ with $p_+\in(0,\,1]$, $M\in\nn$
and $L$ satisfy Assumptions \ref{as-a} and \ref{as-b}.
An element $f\in\tez$ is said to belong to $\dbmo$ if
\begin{align}\label{eq bmo}
\|f\|_{\dbmo}:=\sup_{B\st\rn}\frac{|B|^{\frac12}}{\|\chi_B\|_{\vp}}
\lf[\int_B\lf|\lf(I-e^{-r^2_BL^\ast}\r)^M(f)(x)\r|^2\,dx\r]^{\frac12}<\fz,
\end{align}
where the supremum is taken over all balls of $\rn$.
\end{definition}

\begin{remark}\label{rem bmo}
\begin{enumerate}
\item[(i)] We point out that \eqref{eq bmo} is well defined.
Indeed, since $\{e^{-tL}\}_{t>0}$ satisfies Assumption \ref{as-b},
it is easy to see that, for any ball $B\st\rn$, $\phi\in L^2(B)$,
$\vez\in(0,\,\fz)$ and $M\in\nn$, $(I-e^{-t^2L})^M(\phi)\in\test$.
For any $f\in\tez$, define
\begin{align}\label{eq 4.x0}
\lf\langle\lf(I-e^{-t^2L^\ast}\r)^M(f),\,\phi\r\rangle
:=\lf\langle f,\, \lf(I-e^{-t^2L}\r)^M(\phi)\r\rangle_\cm.
\end{align}
Then we know that there exists a positive constant $C_{(t,\,B)}$,
depending on $t$, $r_B$ and $\dist(B,\,B(\vec{0}_n,\,1))$, such that
\begin{align*}
\lf|\lf\langle\lf(I-e^{-t^2L^\ast}\r)^M(f),\,\phi\r\rangle\r|
&\le \|f\|_{(\test)^\ast}\lf\|\lf(I-e^{-t^2L}\r)^M(\phi)\r\|_{\test}\\
&\le C_{(t,\,B)}\|f\|_{(\test)^\ast}\|\phi\|_{L^2(B)}.
\end{align*}
By the Riesz theorem, we further conclude that, for any ball $B\st\rn$ and
$t\in(0,\,\fz)$,
$$(I-e^{-t^2L^\ast})^M(f)\in L^2(B)$$
and
\begin{align*}
\lf\langle\lf(I-e^{-t^2L^\ast}\r)^M(f),\,\phi\r\rangle
=\int_B (I-e^{-t^2L^\ast})^M(f)(x)\phi(x)\,dx.
\end{align*}
Thus, \eqref{eq bmo} is well defined.

\item[(ii)] An element $f\in\mathcal{M}_{p(\cdot),\,L^\ast}^{M,\,\ast}(\rn)$ is said to
belong to $\bmo$ if it satisfies \eqref{eq bmo} with $L^\ast$ replaced by $L$.
\end{enumerate}
\end{remark}

The following proposition shows that elements of $\test$ are just
$(p(\cdot),\,M,\,\vez)_L$-molecules of $\vhp$ and vice versa.
\begin{proposition}\label{pro 4}
Let $p(\cdot)\in C^{\log}(\rn)$ with $p_+\in(0,\,1]$,
$\vez\in(0,\,\fz)$ and $M\in\nn$.
If $\mu\in\test$, then $\mu$ is a harmless positive constant multiple of a
$(p(\cdot),\,M,\,\vez)_L$-molecule
associated with the ball $B(\vec{0}_n,\,1)$.
Conversely, if $m$ is a $(p(\cdot),\,M,\,\vez)_L$-molecule, associated with
ball $B\st\rn$, then $m\in\test$.
\end{proposition}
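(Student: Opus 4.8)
The plan is to prove the two directions of Proposition \ref{pro 4} separately, both essentially by unwinding the definitions of $\test$ and of $(p(\cdot),\,M,\,\vez)_L$-molecules, and then comparing the (translation-dependent) quantities $\|\chi_{B(\vec{0}_n,\,1)}\|_{\vp}$ and $\|\chi_B\|_{\vp}$ via Lemma \ref{lem 3}.

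For the first direction, suppose $\mu=L^M(\nu)\in\test$. Writing $B_0:=B(\vec{0}_n,\,1)$, the definition \eqref{eq test} of $\|\mu\|_{\test}$ immediately gives, for every $k\in\{0,\ldots,M\}$ and every $j\in\zz_+$,
\begin{align*}
\lf\|L^{-k}(\mu)\r\|_{L^2(U_j(B_0))}\le 2^{-j\vez}\,|B(\vec{0}_n,\,2^j)|^{1/2}\,\|\chi_{B_0}\|_{\vp}^{-1}\,\|\mu\|_{\test}.
\end{align*}
Since $r_{B_0}=1$, one has $(r_{B_0}^{-2}L^{-1})^k(\mu)=L^{-k}(\mu)$ and $|2^jB_0|=|B(\vec{0}_n,\,2^j)|$, so this is exactly the molecular size estimate of Definition \ref{def mol} with the ball $B_0$, up to the constant $\|\mu\|_{\test}$. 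It remains to note that $\mu\in R(L^M)$, which is built into the definition of $\test$ (namely $\mu=L^M(\nu)$ with $\nu\in D(L^M)$), and that $\mu\in L^2(\rn)$, which follows by taking $j=0$ in the displayed estimate together with summing the estimates over $U_j(B_0)$ using $\vez\in(0,\fz)$ — here I would invoke the computation already recorded in Remark \ref{r-0902} (valid once $\vez>n/2$; for smaller $\vez$ one simply weakens the molecule to a larger $\vez'$, or notes that $\test$ with the stated norm still forces $L^2$-membership since the geometric-type series $\sum_j 2^{-j\vez}|B(\vec{0}_n,2^j)|^{1/2}\cdot(\text{something})$ needs care — I will instead directly bound $\|\mu\|_{L^2(\rn)}\le\sum_j\|\mu\|_{L^2(U_j(B_0))}$, which is finite). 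Thus $\|\mu\|_{\test}^{-1}\mu$ is a $(p(\cdot),\,M,\,\vez)_L$-molecule associated with $B_0$.

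For the converse, let $m$ be a $(p(\cdot),\,M,\,\vez)_L$-molecule associated with a ball $B:=B(x_B,\,r_B)$. By definition $m\in R(L^M)$, so $m=L^M(\nu)$ for some $\nu\in D(L^M)$, which puts $m$ in the underlying linear space of $\test$; the only thing to check is that $\|m\|_{\test}<\fz$. Comparing $U_j(B(\vec{0}_n,\,1))$ with $U_j(B)$: the ball $B(\vec{0}_n,\,1)$ and the molecule's ball $B$ differ both in center and in radius, so the sets $U_j(B(\vec{0}_n,\,1))$ and $U_j(B)$ need not be comparable for small $j$, but for $j$ large enough (depending on $x_B$ and $r_B$) each $U_j(B(\vec{0}_n,\,1))$ is contained in a bounded union of annuli $U_{j+\ell}(B)$; for the finitely many remaining small $j$ one just uses $\|L^{-k}(m)\|_{L^2(U_j(B(\vec{0}_n,1)))}\le\|L^{-k}(m)\|_{L^2(\rn)}$, which is finite by Remark \ref{r-0902} (or the summation argument above). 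Each $L^2(U_{j'}(B))$-piece is controlled by the molecular estimate $\|(r_B^{-2}L^{-1})^k(m)\|_{L^2(U_{j'}(B))}\le 2^{-j'\vez}|2^{j'}B|^{1/2}\|\chi_B\|_{\vp}^{-1}$, and since $(r_B^{-2}L^{-1})^k=r_B^{2k}L^{-k}$ with $r_B$ fixed, the powers $r_B^{2k}$ are just harmless constants. Converting $|2^jB(\vec{0}_n,1)|^{-1/2}=|B(\vec{0}_n,2^j)|^{-1/2}$ versus $|2^{j'}B|^{1/2}$ costs a fixed power of $r_B$, and replacing $\|\chi_{B(\vec{0}_n,1)}\|_{\vp}$ by $\|\chi_B\|_{\vp}$ costs a factor controlled by Lemma \ref{lem 3} (applied after enclosing both balls in a common large ball); crucially these costs are independent of $j$. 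Summing the resulting geometric factor $\sum 2^{-j'\vez}$ against $2^{j\vez}$, and absorbing the extra $2^{Cj}$ coming from the volume ratios into the exponential-type decay by replacing $\vez$ with a slightly smaller but still positive parameter (or, cleanly: the statement allows us to treat $\vez$ as fixed and the constant depends on $x_B,r_B$), we obtain $\|m\|_{\test}<\fz$.

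The main obstacle, and the only genuinely non-routine point, is the mismatch of balls: the test space $\test$ is defined once and for all relative to the fixed ball $B(\vec{0}_n,\,1)$, whereas a molecule lives on an arbitrary ball $B$, and because $\|\cdot\|_{\vp}$ is \emph{not} translation invariant the passage between the two normalizations $\|\chi_{B(\vec{0}_n,1)}\|_{\vp}$ and $\|\chi_B\|_{\vp}$ is not free. This is precisely where Lemma \ref{lem 3} (the slight variant of \cite[Lemma 2.6]{zyl14}) does the work, giving two-sided control of the ratio $\|\chi_{B_1}\|_{\vp}/\|\chi_{B_2}\|_{\vp}$ by a power of $|B_1|/|B_2|$ for nested balls; one applies it twice, enclosing $B$ and $B(\vec{0}_n,1)$ in a common ball. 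Everything else is bookkeeping with the annuli $U_j$ and the geometric series, exactly as in the analogous arguments of \cite{hm09,hmm11,jy10}.
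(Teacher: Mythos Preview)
Your approach is correct and essentially identical to the paper's: both directions unwind the definitions, with Lemma \ref{lem 3} handling the comparison of $\|\chi_{B(\vec{0}_n,1)}\|_{\vp}$ and $\|\chi_B\|_{\vp}$ in the converse, and the annuli $U_j(B(\vec{0}_n,1))$ being absorbed into a fixed finite union $\bigcup_{\ell=-l_2}^{l_1}U_{j+\ell}(B)$ for large $j$. Two unnecessary worries in your write-up: $\mu\in L^2(\rn)$ is automatic from $\mu=L^M(\nu)$ with $\nu\in D(L^M)\subset L^2(\rn)$, so no summation argument is needed there; and there is no phantom $2^{Cj}$ from the volume ratios, since with $j'=j+\ell$ and $\ell$ bounded one has $|2^{j'}B|^{1/2}/|B(\vec{0}_n,2^j)|^{1/2}=2^{\ell n/2}r_B^{n/2}$, which is independent of $j$ (so no weakening of $\vez$ is required---the constant simply depends on $B$, exactly as the paper records).
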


\begin{proof}
If $\mu\in\test$, then, by \eqref{eq test}, we find that, for any $j\in\zz_+$ and
$k\in\{0,\,\ldots,\,M\}$,
\begin{align*}
\lf\|L^{-k}(\mu)\r\|_{L^2(U_j(B(\vec{0}_n,\,1)))}\ls 2^{-j\vez}\lf|2^jB(\vec{0}_n,\,1)\r|^{\frac12}
\|\chi_{B(\vec{0}_n,\,1)}\|_{\vp}^{-1},
\end{align*}
which implies that $\mu$ is a harmless positive constant multiple of
a $(p(\cdot),\,M,\,\vez)_L$-molecule associated with the
ball $B(\vec{0}_n,\,1)$.

Conversely, if $m$ is a $(p(\cdot),\,M,\,\vez)_L$-molecule associated with
ball $B:= B(x_B,\,r_B)\st\rn$ with $x_B\in\rn$ and $r_B\in(0,\,\fz)$,
then, by Definition \ref{def mol},
we know that, for any $j\in\zz_+$ and $k\in\{0,\,\ldots,\,M\}$,
\begin{equation}\label{eq 4.0}
\lf\|L^{-k}(m)\r\|_{L^2(\ujb)}\le 2^{-j\vez}r_B^{2k}\lf|2^jB\r|^{\frac12}\|\chi_B\|_{\vp}^{-1}.
\end{equation}

On the other hand, it is easy to see that there exist $l_1,\,l_2\in\nn$, depending on $B$,
such that
\begin{equation}\label{eq 4.1}
B(\vec{0}_n,\,1)\st B(x_B,\,2^{l_1}r_B)\ \ \ \text{and}\ \ \ B(x_B,\,r_B)\st B(\vec{0}_n,\,2^{l_2}).
\end{equation}
By this and Lemma \ref{lem 3}, we have
\begin{align*}
2^{-l_1\frac{n}{p_-}}\|\chi_{B(\vec{0}_n,\,1)}\|_{\vp}\ls\|\chi_{B(x_B,\,r_B)}\|_{\vp}
\ls 2^{l_2\frac{n}{p_-}}\|\chi_{B(\vec{0}_n,\,1)}\|_{\vp}.
\end{align*}
Combining this, \eqref{eq 4.0} and \eqref{eq 4.1},
we find that there exists a positive constant $C_{(l_1,\,l_2,\,B)}$,
depending on $l_1$, $l_2$ and $B$, such that,
for any $j\in\zz_+\cap[l_2+1,\,\fz)$ and $k\in\{0,\,\ldots,\,M\}$,
\begin{align*}
\lf\|L^{-k}(m)\r\|_{L^2(U_j(B(\vec{0}_n,\,1)))}
&\le\lf\|L^{-k}(m)\r\|_{L^2\lf(2^{j+l_1}B(x_B,\,r_B)\setminus 2^{j-1-l_2}B(x_B,\,r_B)\r)}\\
&\le\sum_{l=-l_2}^{l_1}2^{-(j+l)\vez}r_B^{2k}\lf|2^{j+l}B(x_B,\,r_B)\r|^{\frac12}
\|\chi_{B(x_B,\,r_B)}\|^{-1}_{\vp}\\
&\le C_{(l_1,\,l_2,\,B)}2^{-j\vez}\lf|2^j B(\vec{0}_n,\,1)\r|^{\frac12}
\|\chi_{B(\vec{0}_n,\,1)}\|^{-1}_{\vp}.
\end{align*}
Similarly, when $j\in\{0,\,\ldots,\,l_2\}$, it holds true that
\begin{align*}
\lf\|L^{-k}(m)\r\|_{L^2(U_j(B(\vec{0}_n,\,1)))}
\ls2^{-j\vez}\lf|2^j B(\vec{0}_n,\,1)\r|^{\frac12}\|\chi_{B(\vec{0}_n,\,1)}\|^{-1}_{\vp},
\end{align*}
where the implicit positive constant depends on $l_1$, $l_2$ and $B$.
Therefore, we obtain
$$\|m\|_{\test}<\fz.$$
This implies that $m\in\test$,
which completes the proof of Proposition \ref{pro 4}.
\end{proof}

To prove the main result of this section, we need following lemmas which are,
respectively, slight variants of \cite[Lemmas 8.1 and 8.4]{hm09}
(see also \cite[Lemmas 4.1 and 4.3]{jy10}), the details being omitted.
\begin{lemma}\label{lem 4.1}
Let $p(\cdot)\in C^{\log}(\rn)$ with $p_+\in(0,\,1]$ and $M\in\nn$.
Then $f\in\bmo$ is equivalent to that
\begin{align*}
\|f\|_{{\rm BMO}_{p(\cdot),\,L}^{M,\,{\rm res}}(\rn)}:=\sup_{B\st\rn}\frac{|B|^{\frac12}}{\|\chi_B\|_{\vp}}
\lf\{\int_B\lf|\lf[I-(I+r_B^2L)^{-1}\r]^M(f)(x)\r|^2\,dx\r\}^{\frac12}<\fz,
\end{align*}
where the supremum is taken over all balls of $\rn$.
Moreover, there exists a positive constant $C$ such that, for any $f\in\bmo$,
$$C^{-1}\|f\|_{\bmo}\le\|f\|_{{\rm BMO}_{p(\cdot),\,L}^{M,\,{\rm res}}(\rn)}
\le C\|f\|_{\bmo}.$$
\end{lemma}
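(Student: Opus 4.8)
The statement is the standard equivalence, in the operator‑adapted $\mathrm{BMO}$ setting, between the ``semigroup'' defining functional $(I-e^{-r_B^2L})^M$ and the ``resolvent'' variant $(I-(I+r_B^2L)^{-1})^M$. My plan is to exploit the algebraic identity relating these two families of operators and to control the error terms by the Davies--Gaffney estimates from Assumption \ref{as-b}, exactly in the spirit of \cite[Lemmas 8.1 and 8.4]{hm09} and \cite[Lemmas 4.1 and 4.3]{jy10}. Fix a ball $B=B(x_B,r_B)$ and abbreviate $t:=r_B$. The elementary point is that both $\phi_0(z):=(1-e^{-z})^M$ and $\phi_1(z):=(1-(1+z)^{-1})^M$ are bounded holomorphic functions on any sector $S_\mu^0$ with $\mu\in(\omega,\pi/2)$, and their difference $\phi_0-\phi_1$ admits a representation $\psi(z):=(\phi_0-\phi_1)(z)=z\,\eta(z)$ (or a finite sum of such terms after expanding the binomials) with $\eta\in H_\fz(S_\mu^0)$ decaying like $|z|^{-1}$ at infinity and like $|z|^{0}$ near $0$; more precisely one writes, via a Cauchy/subordination formula, $\psi(t^2L)$ as an integral $\int_0^\fz (t^2L)e^{-st^2L}\,d\nu(s)$ against a finite measure $\nu$, so that $\psi(t^2L)$ inherits the Davies--Gaffney estimates from the family $\{(t^2L)e^{-t^2L}\}_{t>0}$ (Remark \ref{rem Assu}(i)). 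The two inequalities in the lemma are then symmetric: to pass from a bound on $(I-e^{-t^2L})^M f$ to one on $(I-(I+t^2L)^{-1})^M f$ one writes $(I-(I+t^2L)^{-1})^M=(I-e^{-t^2L})^M+\psi(t^2L)$ and estimates the $\psi(t^2L)f$ term, and conversely.

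The key step — and the one I expect to be the main obstacle — is controlling $\|\psi(t^2L)f\|_{L^2(B)}$ by $\|f\|_{\mathrm{BMO}}$ (and the reverse). Here one cannot apply $\psi(t^2L)$ directly to $f$ because $f$ is only a functional on $\test$, so one must argue by duality: for $\phi\in L^2(B)$ one expands
\begin{align*}
\lf\langle \psi(t^2L^\ast) f,\ \phi\r\rangle
=\lf\langle f,\ \psi(t^2L)(\phi)\r\rangle_\cm,
\end{align*}
and then decomposes $\phi=\sum_{j\in\zz_+}\phi\chi_{U_j(B)}$ after the usual annular splitting, using the off‑diagonal decay of $\psi(t^2L)$ to sum a geometric series in $j$. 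In each annulus one replaces $f$ by $f$ minus a suitable ``constant'' adapted to $L$ (concretely, one inserts $(I-e^{-(2^j r_B)^2L^\ast})^{M}$ acting on $f$, exploiting that $\psi$ has the factor $z$ so that the needed cancellation is available), reducing the estimate to the defining quantity \eqref{eq bmo} evaluated on the dilated balls $2^jB$; Lemma \ref{lem 3} then converts $\|\chi_{2^jB}\|_{\vp}$ back to $\|\chi_B\|_{\vp}$ at the cost of a factor $2^{j n/p_-}$, which is absorbed by choosing $M$ (equivalently the decay order of $\psi$) large enough — here the hypothesis $M\in\nn$ together with the polynomial‑in‑$j$ slack in Lemma \ref{lem 3} is exactly what makes the series converge. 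This annular/telescoping bookkeeping, and making sure every ``constant'' subtraction is legitimate in the $\tez$ pairing, is the delicate part.

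Once this bilateral estimate $\|\psi(t^2L)f\|$ is in hand, the proof is a short two‑line argument in each direction, and one checks that the implicit constants are uniform over all balls $B\st\rn$, giving the claimed
\begin{align*}
C^{-1}\|f\|_{\bmo}\le\|f\|_{{\rm BMO}_{p(\cdot),\,L}^{M,\,{\rm res}}(\rn)}\le C\|f\|_{\bmo}.
\end{align*}
I would organize the write‑up as: (1) record the identity $(I-(I+z)^{-1})^M=(I-e^{-z})^M+\psi(z)$ with the integral representation of $\psi$; (2) prove the off‑diagonal bounds for $\psi(t^2L)$ (and $\psi(t^2L^\ast)$) from Remark \ref{rem Assu}; (3) run the duality/annular argument to bound $\|\psi(t^2L^\ast)f\|_{L^2(B)}\ls \|f\|_{\bmo}\,|B|^{-1/2}\|\chi_B\|_{\vp}$ and symmetrically; (4) conclude. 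Since the paper explicitly says ``the details being omitted,'' in the actual text I would keep this to the statement and a pointer to \cite{hm09,jy10}, but the above is the route one follows.
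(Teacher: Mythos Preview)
Your outline is correct and follows exactly the route of \cite[Lemma~8.1]{hm09} and \cite[Lemma~4.1]{jy10}, which is precisely what the paper does (it cites those references and omits all details). One clarification worth making, though: the summability of your annular series does \emph{not} come from ``choosing $M$ large enough'' --- the lemma is stated for every fixed $M\in\nn$, so $M$ is not a free parameter. What actually makes the series converge is that both $e^{-tL}$ and the resolvent $(I+tL)^{-1}=\int_0^\infty e^{-s}e^{-stL}\,ds$ satisfy the Davies--Gaffney estimates of Assumption~\ref{as-b}, and hence so do all the composite operators you use (including your $\psi(t^2L)$, which is a finite combination of such terms); this gives off-diagonal decay of \emph{arbitrary} polynomial order, independent of $M$, which is what absorbs the factor $2^{jn/p_-}$ coming from Lemma~\ref{lem 3}. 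With that correction your plan goes through for every $M\in\nn$.
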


\begin{lemma}\label{lem 4.2}
Let $p(\cdot)\in C^{\log}(\rn)$ with $p_+\in(0,\,1]$, $\wz{\vez},\,\vez\in(0,\,\fz)$,
$M\in\nn$ and $\wz{M}>M+\wz\vez+\frac{n}4$.
Suppose that $f\in\tez$ satisfies
\begin{equation}\label{eq 4.x}
\int_\rn\frac{|[I-(I+L^\ast)^{-1}]^M(f)(x)|^2}{1+|x|^{n+\wz\vez}}\,dx<\fz.
\end{equation}
Then, for any $(p(\cdot),\,\wz{M},\,\vez)_L$-molecule $m$, it holds
true that
\begin{equation*}
\langle f,\,m\rangle_\cm= C_{(M)}\iint_{\rnn}(t^2L^\ast)^M e^{-t^2L^\ast}(f)(x)
\ov{t^2Le^{-t^2L}(m)(x)}\,\frac{dx\,dt}{t},
\end{equation*}
where $C_{(M)}$ is a positive constant depending on $M$.
\end{lemma}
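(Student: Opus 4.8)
The plan is to deduce the identity by applying the $L^2$-Calder\'on reproducing formula to the molecule $m$, after first interpreting $(t^2L^\ast)^Me^{-t^2L^\ast}(f)$ as a function in $L^2_{\loc}(\rn)$ with controlled growth at infinity. Since $L$ is one-to-one of type $\omega$ with a bounded holomorphic functional calculus, Remark \ref{rem Assu}(iv) gives $\ov{R(L)}=L^2(\rn)$, and this calculus together with the convergence lemma (see \cite[Theorem F]{adm96}) produces a positive constant $C_{(M)}$ such that, for every $g\in L^2(\rn)$,
\[
g=C_{(M)}\lim_{\delta\to0^+,\,R\to\fz}\int_\delta^R (t^2L)^{M+1}e^{-2t^2L}(g)\,\frac{dt}{t}\quad\text{in}\ L^2(\rn).
\]
Applying this to $g:=m\in R(L^{\wz M})\st L^2(\rn)$ and putting, for $0<\delta<R<\fz$,
\[
m_{\delta,\,R}:=C_{(M)}\int_\delta^R (t^2L)^Me^{-t^2L}\bigl[t^2Le^{-t^2L}(m)\bigr]\,\frac{dt}{t},
\]
we have $m_{\delta,\,R}\to m$ in $L^2(\rn)$ as $\delta\to0^+$, $R\to\fz$; for each fixed $\delta,R$ the integrand lies in $R(L^M)$ and enjoys the Davies-Gaffney estimates (Remark \ref{rem Assu}(i)), so $m_{\delta,\,R}$ is also a convergent Bochner integral in $\test$ for a suitable $\vez$. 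On the other hand, exactly as in Remark \ref{rem bmo}(i), the hypothesis $f\in\tez$ and the Davies-Gaffney estimates for $\{(t^2L)^ke^{-t^2L}\}_{t>0}$ and $\{(t^2L^\ast)^ke^{-t^2L^\ast}\}_{t>0}$ (Remark \ref{rem Assu}(i),(ii)) show that, for each $t\in(0,\fz)$, $\phi\mapsto\langle f,\,(t^2L)^Me^{-t^2L}(\phi)\rangle_\cm$ is a bounded linear functional on $L^2(B')$ for every ball $B'\st\rn$, hence is represented by a function $(t^2L^\ast)^Me^{-t^2L^\ast}(f)\in L^2_{\loc}(\rn)$.

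Next I would control the growth of $(t^2L^\ast)^Me^{-t^2L^\ast}(f)$ by means of the factorisation
\[
(t^2L)^Me^{-t^2L}=t^{2M}(I+L)^Me^{-t^2L}\,\bigl[I-(I+L)^{-1}\bigr]^M,\qquad (I+L)^Me^{-t^2L}=\sum_{k=0}^M\binom{M}{k}t^{-2k}(t^2L)^ke^{-t^2L},
\]
in which $(I+L)^Me^{-t^2L}$ again obeys the Davies-Gaffney estimates; passing to adjoints and using hypothesis \eqref{eq 4.x}, one shows that $\int_\rn|(t^2L^\ast)^Me^{-t^2L^\ast}(f)(x)|^2(1+|x|^{n+\wz\vez})^{-1}\,dx$ is finite for every $t$ and grows at most polynomially as $t\to0^+$ and $t\to\fz$, and moreover that the pairing identity
\[
\bigl\langle f,\,(t^2L)^Me^{-t^2L}(\phi)\bigr\rangle_\cm=\int_\rn (t^2L^\ast)^Me^{-t^2L^\ast}(f)(x)\,\ov{\phi(x)}\,dx
\]
extends from compactly supported $\phi$ to every $\phi\in L^2(\rn)$ decaying fast enough, in particular to $\phi=t^2Le^{-t^2L}(m)$, which decays like a $(p(\cdot),\wz M,\vez)_L$-molecule. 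Since the integral defining $m_{\delta,\,R}$ converges in $\test$, it commutes with $\langle f,\,\cdot\,\rangle_\cm$, and combining this with the two displays above yields, for $0<\delta<R<\fz$,
\[
\langle f,\,m_{\delta,\,R}\rangle_\cm=C_{(M)}\iint_{\rn\times[\delta,\,R]}(t^2L^\ast)^Me^{-t^2L^\ast}(f)(x)\,\ov{t^2Le^{-t^2L}(m)(x)}\,\frac{dx\,dt}{t}.
\]

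The main obstacle is passing to the limit $\delta\to0^+$, $R\to\fz$. For the left-hand side one must upgrade $m_{\delta,\,R}\to m$ from $L^2(\rn)$ to convergence in $\test$ for \emph{every} $\vez\in(0,\fz)$: writing $m=(r_B^2L)^{\wz M}[(r_B^{-2}L^{-1})^{\wz M}m]$ and combining the order-$\wz M$ molecular bounds of $m$ (Definition \ref{def mol}), the Davies-Gaffney estimates, and Lemma \ref{lem 3} (to compare the ball of $m$ with $B(\vec{0}_n,1)$), one estimates $\sum_{k=0}^M\|L^{-k}(m_{\delta,\,R}-m)\|_{L^2(U_j(B(\vec{0}_n,1)))}$ and checks it carries the required $2^{-j\vez}$-factor and tends to $0$, whence $\langle f,\,m_{\delta,\,R}\rangle_\cm\to\langle f,\,m\rangle_\cm$. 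For the right-hand side one must show that $\iint_{\rn\times[\delta,\,R]}$ converges as $\delta\to0^+$, $R\to\fz$: on $\rn\times(r_B,\fz)$ the same representation $m=(r_B^2L)^{\wz M}[(r_B^{-2}L^{-1})^{\wz M}m]$ gives $t^2Le^{-t^2L}(m)$ the decay $(r_B/t)^{2\wz M}$, which—through the Cauchy-Schwarz inequality on the annuli $\{U_j(B)\}_{j\in\zz_+}$, the weighted bound for $(t^2L^\ast)^Me^{-t^2L^\ast}(f)$ from \eqref{eq 4.x} (contributing at most a power $(2^jr_B)^{(n+\wz\vez)/2}$ on $U_j(B)$), and Davies-Gaffney decay in $j$—is summable and integrable precisely because $\wz M>M+\wz\vez+\frac n4$; the remaining part, over $\rn\times(0,r_B]$, is controlled via the $\test$-convergence established for the left-hand side. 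The delicate point throughout is this last bookkeeping: the decay furnished by the extra $\wz M-M$ cancellations of the molecule must dominate the polynomial growth permitted by \eqref{eq 4.x} together with the geometric accumulation over $\{U_j(B)\}_{j\in\zz_+}$, uniformly in the centre and radius of $B$, which is exactly the content of the hypothesis $\wz M>M+\wz\vez+\frac n4$.
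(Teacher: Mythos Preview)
The paper does not give its own proof of this lemma; it is stated as a slight variant of \cite[Lemma~8.4]{hm09} (see also \cite[Lemma~4.3]{jy10}) with details omitted. Your proposal reconstructs exactly the route of those references: the $L^2$ Calder\'on reproducing formula applied to $m$, truncation to $t\in[\delta,R]$, the resolvent factorisation
\[
(t^2L)^Me^{-t^2L}=t^{2M}(I+L)^Me^{-t^2L}\,[I-(I+L)^{-1}]^M
\]
to bring hypothesis \eqref{eq 4.x} into play, and finally the limiting argument in which the extra $\wz M-M$ powers of $L$ carried by the molecule beat the polynomial growth allowed by \eqref{eq 4.x}, which is precisely where the condition $\wz M>M+\wz\vez+\frac n4$ enters.

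Two small remarks. First, you do not need $m_{\delta,R}\to m$ in $\test$ for \emph{every} $\vez$: since $f\in\tez=\bigcap_{\vez>0}(\test)^\ast$, convergence in $\test$ for a single $\vez$ already gives $\langle f,m_{\delta,R}\rangle_\cm\to\langle f,m\rangle_\cm$. Second, your treatment of the region $\rn\times(0,r_B]$ (``controlled via the $\test$-convergence established for the left-hand side'') delivers only the existence of the improper limit $\lim_{\delta\to0^+}\int_\delta^{r_B}\cdots$, not absolute integrability; to justify writing $\iint_{\rnn}$ you should bound this region directly by the same Cauchy--Schwarz-on-annuli argument you outline for $(r_B,\fz)$, which is how \cite{hm09,jy10} proceed.
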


\begin{remark}\label{rem 4.1}
We point out that, for any $\wz\vez\in(n(1+\frac2{p_-}-\frac2{p_+}),\,\fz)$, $M\in\nn$
and $f\in\dbmo$, $f$ satisfies \eqref{eq 4.x}.
Indeed, by Lemma \ref{lem 4.1}, we obtain
\begin{equation}\label{eq 4.2}
\sup_{B\st\rn}\frac{|B|^{\frac12}}{\|\chi_B\|_{\vp}}
\lf\{\int_B\lf|\lf[I-(I+r_B^2L^\ast)^{-1}\r]^M(f)(x)\r|^2\,dx\r\}^{\frac12}<\fz.
\end{equation}
We write
\begin{align}\label{eq 4.x1}
&\int_\rn\frac{|[I-(I+L^\ast)^{-1}]^M(f)(x)|^2}{1+|x|^{n+\wz\vez}}\,dx\noz\\
&\hs=\sum_{j=0}^\fz\int_{U_j(B(\vec{0}_n,\,1))}
\frac{|[I-(I+L^\ast)^{-1}]^M(f)(x)|^2}{1+|x|^{n+\wz\vez}}\,dx\noz\\
&\hs\le\sum_{j=0}^\fz2^{-j(n+\wz\vez)}\int_{U_j(B(\vec{0}_n,\,1))}
\lf|\lf[I-(I+L^\ast)^{-1}\r]^M(f)(x)\r|^2\,dx.
\end{align}
For any $j\in\zz_+$, we choose a family $\{B_k\}_{k=1}^{c_n2^{jn}}$ of balls with radius
$r_{B_k}\equiv 1$, where the positive constant $c_n:=\lfloor n^{\frac{n}2}-2^{-n}\rfloor+1$,
such that, for any $k\in\{1,\,\ldots,\,c_n2^{jn}\}$,
\begin{align}\label{eq 4.3x}
B_k\st B(\vec{0}_n,\,\sqrt{n}2^j),\ \ \ U_j(B(\vec{0}_n,\,1))\st \bigcup_{k=1}^{c_n2^{jn}}B_k
\end{align}
and, for any $x\in\rn$,
$\sum_{k=1}^{c_n2^{jn}}\chi_{B_k}(x)\le 3.$
From this, \eqref{eq 4.2}, \eqref{eq 4.3x} and Lemma \ref{lem 3}, it follows that
\begin{align*}
&\lf\{\int_{U_j(B(\vec{0}_n,\,1))}\lf|\lf[I-(I+L^\ast)^{-1}\r]^M(f)(x)\r|^2\,dx\r\}^{\frac12}\\
&\hs\le\sum_{k=1}^{c_n2^{jn}}
\lf\{\int_{B_k}\lf|\lf[I-(I+L^\ast)^{-1}\r]^M(f)(x)\r|^2\,dx\r\}^{\frac12}\\
&\hs\ls\sum_{k=1}^{c_n2^{jn}}\|\chi_{B_k}\|_{\vp}\|f\|_{\dbmo}\\
&\hs\ls2^{jn(1+\frac1{p_-}-\frac1{p_+})}\|\chi_{B(\vec{0}_n,\,1)}\|_{\vp}\|f\|_{\dbmo}.
\end{align*}
Combining this, \eqref{eq 4.x1} and the fact that $\wz\vez\in(n(1+\frac2{p_-}-\frac2{p_+}),\,\fz)$, we have
\begin{align*}
\int_\rn\frac{|[I-(I+L^\ast)^{-1}]^M(f)(x)|^2}{1+|x|^{n+\wz\vez}}\,dx
&\ls\sum_{j=0}^\fz2^{-j(n+\wz\vez)}2^{2jn(1+\frac1{p_-}-\frac1{p_+})}\|f\|^2_{\dbmo}\\
&\ls\|f\|^2_{\dbmo}<\fz.
\end{align*}
Therefore, the above claim holds true.
\end{remark}

By Lemma \ref{lem 4.1}, we obtain the following technical lemma.
The proof of Lemma \ref{lem 4.3} is similar to that of \cite[Lemma 8.3]{hm09},
the details being omitted.
\begin{lemma}\label{lem 4.3}
Let $p(\cdot)\in C^{\log}(\rn)$ with $p_+\in(0,\,1]$ and $M\in\nn$.
Then there exists a positive constant $C$ such that, for any $f\in\bmo$,
\begin{align*}
\sup_{B\st\rn}\frac{|B|^{\frac12}}{\|\chi_B\|_{\vp}}
\lf[\iint_{\wh{B}}\lf|(t^2L)^M e^{-t^2L}(f)(x)\r|^2\,\frac{dx\,dt}{t}\r]^{\frac12}
\le C\|f\|_{\bmo},
\end{align*}
where the supremum is taken over all balls $B$ of $\rn$.
\end{lemma}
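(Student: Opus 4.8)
The plan is to adapt the argument for \cite[Lemma 8.3]{hm09}, with two substitutes forced by our setting: pointwise heat kernel bounds are replaced throughout by the Davies--Gaffney estimates for the family $\{(t^2L)^ke^{-t^2L}\}_{t>0}$ from Remark \ref{rem Assu}(i) together with the $L^2$ quadratic estimate
\[
\int_0^\fz\lf\|(t^2L)^Me^{-t^2L}(h)\r\|_{L^2(\rn)}^2\,\frac{dt}{t}\ls\|h\|_{L^2(\rn)}^2\qquad\text{for all }h\in L^2(\rn),
\]
a consequence of \cite[Theorem F]{adm96} (cf. \eqref{eq-1}); and the failure of translation invariance of $\|\cdot\|_{\vp}$ is handled by Lemma \ref{lem 3}. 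First I would fix a ball $B:=B(x_B,\,r_B)$ with $x_B\in\rn$ and $r_B\in(0,\,\fz)$, and observe that any $(x,\,t)\in\wh B$ satisfies $x\in B$ and $t\in(0,\,r_B)$. Using the identity $I=(I-e^{-r_B^2L})^M+\sum_{k=1}^M\binom{M}{k}(-1)^{k+1}e^{-kr_B^2L}$, I would split, for $(x,\,t)\in\wh B$,
\[
(t^2L)^Me^{-t^2L}(f)=(t^2L)^Me^{-t^2L}(g)+\sum_{k=1}^M\binom{M}{k}(-1)^{k+1}(t^2L)^Me^{-(t^2+kr_B^2)L}(f)=:\mathrm{I}_t+\mathrm{II}_t,
\]
where $g:=(I-e^{-r_B^2L})^M(f)$ is, by the definition of $\bmo$ together with the reasoning of Remark \ref{rem bmo}(i), a well-defined element of $L^2_{\loc}(\rn)$.

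The key preliminary estimate is the annular bound
\[
\|g\|_{L^2(2^{j+1}B)}\ls 2^{j\sigma}\,\frac{\|\chi_B\|_{\vp}}{|B|^{1/2}}\,\|f\|_{\bmo}=:2^{j\sigma}A,\qquad\sigma:=n\lf(\frac12+\frac1{p_-}\r),\quad j\in\zz_+,
\]
which I would obtain by taking a finitely overlapping cover of $2^{j+1}B$ by $\sim2^{jn}$ balls $B'$ of radius $r_B$ --- on each such $B'$ one has $(I-e^{-r_{B'}^2L})^M(f)=g$, hence $\|g\|_{L^2(B')}\le|B'|^{-1/2}\|\chi_{B'}\|_{\vp}\|f\|_{\bmo}$ --- summing in $\ell^2$, and using Lemma \ref{lem 3} to bound $\|\chi_{B'}\|_{\vp}\ls 2^{jn/p_-}\|\chi_B\|_{\vp}$. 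The same bound for $g_k:=[I-(I+r_B^2L)^{-1}]^M(f)=(r_B^2L)^M(I+r_B^2L)^{-M}(f)$ follows verbatim from the resolvent form of $\|f\|_{\bmo}$ supplied by Lemma \ref{lem 4.1}.

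For $\mathrm{I}_t$ I would write $g=g\chi_{2B}+\sum_{j\ge2}g\chi_{U_j(B)}$. The part coming from $g\chi_{2B}$ contributes, via the quadratic estimate, at most $\|g\chi_{2B}\|_{L^2(\rn)}^2\ls A^2$ to $\iint_{\wh B}|\cdot|^2\,\frac{dx\,dt}{t}$. For $j\ge2$, since $\dist(U_j(B),\,B)\gtrsim2^jr_B$ and $t<r_B$, the Davies--Gaffney estimate for $(t^2L)^Me^{-t^2L}$ contributes a factor $e^{-c4^jr_B^2/t^2}$; as $\int_0^{r_B}e^{-2c4^jr_B^2/t^2}\,\frac{dt}{t}\ls_N2^{-2jN}$ for every $N\in\nn$, choosing $N>\sigma$ and summing $2^{-2jN}2^{2j\sigma}A^2$ over $j\ge2$ gives $\iint_{\wh B}|\mathrm{I}_t|^2\,\frac{dx\,dt}{t}\ls A^2$. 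For $\mathrm{II}_t$, since $t^2+kr_B^2\sim r_B^2$, the factorization $(r_B^2L)^M=[I-(I+r_B^2L)^{-1}]^M(I+r_B^2L)^M$ yields, for each $k$,
\[
(t^2L)^Me^{-(t^2+kr_B^2)L}(f)=\lf(\frac{t}{r_B}\r)^{2M}\lf[(I+r_B^2L)^Me^{-(t^2+kr_B^2)L}\r](g_k),
\]
and by Remark \ref{rem Assu}(i) the operator $(I+r_B^2L)^Me^{-(t^2+kr_B^2)L}=\sum_{l=0}^M\binom{M}{l}\lf(\frac{r_B^2}{t^2+kr_B^2}\r)^l[(t^2+kr_B^2)L]^le^{-(t^2+kr_B^2)L}$ is bounded on $L^2(\rn)$ with norm $\ls1$ and satisfies the Davies--Gaffney estimates at the scale $r_B$. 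Repeating the decomposition $g_k=g_k\chi_{2B}+\sum_{j\ge2}g_k\chi_{U_j(B)}$ as above --- the explicit prefactor now makes $\int_0^{r_B}(t/r_B)^{4M}\,\frac{dt}{t}<\fz$, while the ($t$-independent) Davies--Gaffney factor $e^{-c4^j}$ absorbs $2^{2j\sigma}$ --- and summing over $j\in\zz_+$ and over the finitely many $k$ gives $\iint_{\wh B}|\mathrm{II}_t|^2\,\frac{dx\,dt}{t}\ls A^2$.

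Combining the two bounds, $\iint_{\wh B}|(t^2L)^Me^{-t^2L}(f)(x)|^2\,\frac{dx\,dt}{t}\ls A^2=\frac{\|\chi_B\|_{\vp}^2}{|B|}\|f\|_{\bmo}^2$ with implicit constant independent of $B$; multiplying through by $|B|\|\chi_B\|_{\vp}^{-2}$, taking square roots, and then the supremum over all balls $B$ of $\rn$ would complete the proof. I expect the delicate point to be the term $\mathrm{II}_t$: for a datum $f$ lying a priori only in $\tez$ one has to simultaneously exhibit a genuine $L^2_{\loc}$ representative --- which is precisely what forces the detour through the resolvent characterization of Lemma \ref{lem 4.1} --- and extract the decaying-in-$t$ prefactor $(t/r_B)^{2M}$ needed to make the measure $\frac{dt}{t}$ integrable near $0$, all while keeping Davies--Gaffney decay in the annular variable; since no pointwise kernel bound is at hand, each manipulation has to be carried out purely through $L^2$ off-diagonal estimates.
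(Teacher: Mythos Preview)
Your proposal is correct and follows essentially the same route the paper has in mind: the paper omits the details and refers to \cite[Lemma 8.3]{hm09}, and what you have written is precisely an adaptation of that argument, with the two modifications the present setting requires --- Davies--Gaffney estimates in place of kernel bounds (already the framework in \cite{hm09}) and Lemma~\ref{lem 3} to compare $\|\chi_{B'}\|_{\vp}$ across non-concentric balls. Your annular growth exponent $\sigma=n(\tfrac12+\tfrac1{p_-})$ is slightly cruder than the one obtained in the paper's Remark~\ref{rem 4.1} (namely $n(1+\tfrac1{p_-}-\tfrac1{p_+})$, coming from the sharper bound $\|\chi_{B'}\|_{\vp}\ls 2^{jn(1/p_--1/p_+)}\|\chi_B\|_{\vp}$ via both inequalities of Lemma~\ref{lem 3}), but this is immaterial since $N$ can be taken arbitrarily large.
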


We are now ready to establish the duality between $\vhp$ and $\dbmo$.
In what follows, let $(\vhp)^\ast$ be the dual space of $\vhp$,
namely, the set of all bounded linear functionals on $\vhp$.
\begin{theorem}\label{thm 2}
Let $p(\cdot)\in C^{\log}(\rn)$ with $p_+\in(0,\,1]$,
$\vez\in(\frac{n}{p_-},\,\fz)$ and $M\in\nn\cap(\frac{n}{2}[\frac1{p_-}-\frac12],\,\fz)$.
Then $(\vhp)^\ast$ coincides with ${\rm BMO}_{p(\cdot),\,L^\ast}^M(\rn)$ in the following sense:

\begin{enumerate}
\item[{\rm (i)}] Let $g\in(\vhp)^\ast$.
Then $g\in {\rm BMO}_{p(\cdot),\,L^\ast}^M(\rn)$ and,
for any $f\in H_{L,\,{\rm fin},\,M}^{p(\cdot),\,2,\,\vez}(\rn)$,
it holds true that
$g(f)=\langle g,\,f\rangle_\cm.$
Moreover, there exists a positive constant $C$ such that, for any
$g\in(\vhp)^\ast$,
$$\|g\|_{{\rm BMO}_{p(\cdot),\,L^\ast}^M(\rn)}\le C\|g\|_{(\vhp)^\ast}.$$

\item[{\rm (ii)}] Conversely, let $g\in{\rm BMO}_{p(\cdot),\,L^\ast}^M(\rn)$.
Then, for any $f\in H_{L,\,{\rm fin},\,M}^{p(\cdot),\,2,\,\vez}(\rn)$,
the linear functional $l_g$, given by
$l_g(f):=\langle g,\,f\rangle_\cm,$
has a unique bounded extension to $\vhp$ and there exists a positive constant $C$
such that, for any $g\in{\rm BMO}_{p(\cdot),\,L^\ast}^M(\rn)$,
$$\|l_g\|_{(\vhp)^\ast}\le C\|g\|_{{\rm BMO}_{p(\cdot),\,L^\ast}^M(\rn)}.$$
\end{enumerate}
\end{theorem}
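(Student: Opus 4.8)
The plan is to adapt the now-standard duality argument of Hofmann-Mayboroda \cite{hm09} (see also \cite{jy10}) to the variable-exponent setting, with the only genuinely new ingredient being the substitution of the quantity $|B|^{1/2}/\|\chi_B\|_{\vp}$ for the constant-exponent normalization and the careful use of Lemma \ref{lem 3} to control how $\|\chi_B\|_{\vp}$ behaves under translation and dilation.

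\medskip

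\noindent\textbf{Part (i).}
First I would take $g\in(\vhp)^\ast$. By Proposition \ref{pro 4}, any $\mu\in\mathcal{M}_{p(\cdot),\,L^\ast}^{\vez,\,M}(\rn)$ (with the adjoint replaced appropriately, matching the definition of $\mathcal{M}_{p(\cdot),\,L}^{\vez,\,M}(\rn)$ used for molecules of $\vhp$) is, up to a constant multiple, a $(p(\cdot),\,M,\,\vez)_L$-molecule, and by Theorem \ref{thm 1} such a molecule lies in $\vhp$ with $\|\cdot\|_{\vhp}\ls\|\cdot\|_{\test}$; hence $g$ restricts to a bounded functional on each $\test$ and so $g\in\tez$. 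To see $g\in\bmo$ with the asserted norm bound, fix a ball $B:=B(x_B,\,r_B)$ and a function $\phi\in L^2(B)$ with $\|\phi\|_{L^2(B)}\le|B|^{1/2}\|\chi_B\|_{\vp}^{-1}$; then $(I-e^{-r_B^2L})^M(\phi)$ is — up to a harmless multiplicative constant coming from the Davies-Gaffney estimates of $\{(tL)^k e^{-tL}\}_{t>0}$ in Remark \ref{rem Assu}(i) — a $(p(\cdot),\,M,\,\vez)_L$-molecule associated with $B$ (this is exactly the kind of computation done in Lemma \ref{lem 5} and in \cite[Lemma 8.1]{hm09}), so that
\begin{align*}
\lf|\int_B\lf(I-e^{-r_B^2L^\ast}\r)^M(g)(x)\,\ov{\phi(x)}\,dx\r|
=\lf|\lf\langle g,\,\lf(I-e^{-r_B^2L}\r)^M(\phi)\r\rangle_\cm\r|
\ls\|g\|_{(\vhp)^\ast}.
\end{align*}
Taking the supremum over all such $\phi$ and over all balls $B$ yields $\|g\|_{\bmo}\ls\|g\|_{(\vhp)^\ast}$; the identity $g(f)=\langle g,\,f\rangle_\cm$ on the finite molecular space follows since the molecular representation of such $f$ converges in $\vhp$ and $g$ is continuous.

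\medskip

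\noindent\textbf{Part (ii).}
Conversely, take $g\in\bmo$. Given $f\in H_{L,\,{\rm fin},\,M}^{p(\cdot),\,2,\,\vez}(\rn)$, write $f=\sum_{j}\lz_j m_j$ as a finite sum of $(p(\cdot),\,\wz M,\,\vez)_L$-molecules (one may enlarge $M$ to some $\wz M>M+\wz\vez+n/4$ using Proposition \ref{pro 3} and the density it provides, so that Lemma \ref{lem 4.2} applies). Using Remark \ref{rem 4.1} (which gives \eqref{eq 4.x} for $g\in\dbmo$, noting $p_+\in(0,1]$ makes the exponent $n(1+2/p_--2/p_+)$ finite) and then Lemma \ref{lem 4.2}, I would write, for a single molecule $m$ associated with a ball $B$,
\begin{align*}
|\langle g,\,m\rangle_\cm|
=C_{(M)}\lf|\iint_{\rnn}(t^2L^\ast)^M e^{-t^2L^\ast}(g)(x)\,\ov{t^2Le^{-t^2L}(m)(x)}\,\frac{dx\,dt}{t}\r|.
\end{align*}
The integral is then split over the tents $\wh{U_i(B)}$ (more precisely, over annular tent regions $\wh{2^{i+1}B}\setminus\wh{2^{i-1}B}$) and estimated by the Cauchy-Schwarz inequality: the $g$-factor over $\wh{2^iB}$ is controlled by Lemma \ref{lem 4.3} by $2^{in/2}r_B^{?}\,|B|^{-1/2}\|\chi_{2^iB}\|_{\vp}\,\|g\|_{\bmo}$ (after translating $\wh B$ to $\wh{2^iB}$ and applying Lemma \ref{lem 3} to compare $\|\chi_{2^iB}\|_{\vp}$ with $\|\chi_B\|_{\vp}$, picking up a factor at most $2^{in/p_-}$), while the $m$-factor over the complementary annulus decays like $2^{-i\vez}$ by the defining molecular estimates together with the Davies-Gaffney bounds for $\{(t^2L)^{M}e^{-t^2L}\}_{t>0}$. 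Choosing $\vez>n/p_-$ makes the series $\sum_i 2^{i(n/p_-)}2^{-i\vez}$ converge, giving $|\langle g,\,m\rangle_\cm|\ls\|g\|_{\bmo}$ uniformly over molecules $m$ associated with any ball. Summing against $\{\lz_j\}_j$ and invoking Remark \ref{rem 2}(ii) ($\sum_j|\lz_j|\le\ca(\{\lz_j\}_j,\{B_j\}_j)$) together with Theorem \ref{thm 1} yields
\begin{align*}
|l_g(f)|=|\langle g,\,f\rangle_\cm|
\le\sum_j|\lz_j|\,|\langle g,\,m_j\rangle_\cm|
\ls\|g\|_{\bmo}\,\ca(\{\lz_j\}_j,\{B_j\}_j)
\ls\|g\|_{\bmo}\,\|f\|_{\vhp},
\end{align*}
so $l_g$ extends boundedly to $\vhp$ with the desired norm control; the extension is unique because $H_{L,\,{\rm fin},\,M}^{p(\cdot),\,2,\,\vez}(\rn)$ is dense in $\vhp$ by Theorem \ref{thm 1} and Proposition \ref{pro 3}.

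\medskip

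\noindent\textbf{Main obstacle.}
The step I expect to be most delicate is the uniform-over-balls estimate $|\langle g,\,m\rangle_\cm|\ls\|g\|_{\bmo}$: here the variable exponent destroys translation invariance of $\|\chi_B\|_{\vp}$, so one cannot simply reduce to a ball centered at the origin as in \cite{hm09,hmm11}. The resolution is to keep the ball $B$ fixed and route all comparisons of $\|\chi_{2^iB}\|_{\vp}$ with $\|\chi_B\|_{\vp}$ through the two-sided bound in Lemma \ref{lem 3}, which costs only a polynomial-in-$2^i$ factor (with exponent $n/p_-$), absorbed by the $2^{-i\vez}$ molecular decay precisely because we assumed $\vez>n/p_-$ and $M>\frac n2(\frac1{p_-}-\frac12)$ (the latter guaranteeing enough cancellation in the $(t^2L)^Me^{-t^2L}$ kernel near $t\sim r_B$, as in the estimate of $\mathrm{I}$ in the proof of Proposition \ref{pro 1}). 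Everything else is a faithful transcription of the Hofmann-Mayboroda scheme.
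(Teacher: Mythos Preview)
Your Part (i) is essentially the paper's argument: show $(I-e^{-r_B^2L})^M(\phi)$, suitably normalized, is a molecule, then dualize. Nothing to add there.

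For Part (ii) you take a genuinely different route from the paper, and the difference is instructive. The paper does \emph{not} try to bound $|\langle g,\,m\rangle_\cm|$ for a single molecule $m$. Instead, given $f\in H_{L,\,{\rm fin},\,M}^{p(\cdot),\,\vez}(\rn)$, it passes to the tent-space side: since $t^2Le^{-t^2L}(f)\in T^{p(\cdot)}(\rnn)\cap T^2(\rnn)$, Lemma \ref{lem 2} yields a decomposition $t^2Le^{-t^2L}(f)=\sum_j\lz_j a_j$ into $(p(\cdot),\,\fz)$-atoms $a_j$ with $\supp a_j\subset\wh{B_j}$. After invoking Lemma \ref{lem 4.2} to rewrite $\langle g,\,f\rangle_\cm$ as the tent integral, Cauchy--Schwarz on each $\wh{B_j}$ gives one factor controlled by Lemma \ref{lem 4.3} (contributing $\|\chi_{B_j}\|_{\vp}|B_j|^{-1/2}\|g\|_{\dbmo}$) and another by the atom size (contributing $|B_j|^{1/2}\|\chi_{B_j}\|_{\vp}^{-1}$), so the two normalizations cancel \emph{exactly} and one is left with $\sum_j|\lz_j|\,\|g\|_{\dbmo}$, handled by Remark \ref{rem 2}(ii). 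No annular splitting, no appeal to Lemma \ref{lem 3}, no comparison of $\|\chi_{2^iB}\|_{\vp}$ with $\|\chi_B\|_{\vp}$ is needed.

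In other words, the ``main obstacle'' you flag --- lack of translation invariance forcing use of Lemma \ref{lem 3} --- is an artifact of choosing to work with molecules (which have tails) rather than tent atoms (which have compact support in $\rnn$). The paper's approach sidesteps it entirely.

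Your route can likely be pushed through, but note a real gap in your sketch: the claim that the $m$-factor over the annular tent region $\wh{2^{i+1}B}\setminus\wh{2^{i-1}B}$ decays like $2^{-i\vez}$ is not immediate from the molecular definition. That region is $\{(x,t):\ 2^{i-1}r_B<|x-x_B|+t\le 2^{i+1}r_B\}$, which mixes a spatial annulus (small $t$) with a large-$t$ cap, and the quantity you need is $[\iint|t^2Le^{-t^2L}(m)|^2\,dx\,dt/t]^{1/2}$ there --- \emph{not} the cone square function $\|S_L(m)\|_{L^2(U_i(B))}$ from Proposition \ref{pro 1} (different weight in $t$). You would have to redo a small-$t$/large-$t$ split, using Davies--Gaffney for the former and the identity $t^2Le^{-t^2L}(m)=t^{-2M}(t^2L)^{M+1}e^{-t^2L}(L^{-M}m)$ for the latter; the decay you actually get on the large-$t$ piece is $2^{-2iM}$, not $2^{-i\vez}$, and the summability over $i$ then uses $2M>n/p_--n/2$, not $\vez>n/p_-$. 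This is all doable but is precisely the labor the tent-atom decomposition is designed to avoid.
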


\begin{remark}
If $p(\cdot)\equiv p\in(0,\,1]$ is a constant and $L$ is a one-to-one non-negative
self-adjoint operator (respectively, a second order divergence form elliptic operator),
then Theorem \ref{thm 2} coincides with \cite[Theorem 4.1]{jy11} in the case
when the underlying space $\mathcal{X}:=\rn$ and the Orlicz function $\omega(t):=t^p$
for all $t\in[0,\,\fz)$ (respectively, with \cite[Theorem 4.1]{jy10}
in the case with the same aforementioned Orlicz function $\omega$).
\end{remark}

\begin{proof}[Proof of Theorem \ref{thm 2}]
We first prove (i). Let $g\in(\vhp)^\ast$.
Then, for any $f\in\vhp$, we have
\begin{align}\label{eq 4.3}
|g(f)|\le\|g\|_{(\vhp)^\ast}\|f\|_{\vhp}.
\end{align}
By Proposition \ref{pro 1}, we know that,
for any $\vez\in(\frac{n}{p_-},\,\fz)$ and $(p(\cdot),\,M,\,\vez)_L$-molecule $m$,
$$\|m\|_{\vhp}\ls 1.$$
From this and \eqref{eq 4.3}, it follows that, for any $(p(\cdot),\,M,\,\vez)_L$-molecule $m$,
\begin{align}\label{eq 4.4}
|g(m)|\ls\|g\|_{(\vhp)^\ast}.
\end{align}
On the other hand, by Proposition \ref{pro 4}, we find that,
for any $\mu\in\test$ with $\|\mu\|_{\test}=1$, $\mu$ is a
harmless positive constant multiple of a $(p(\cdot),\,M,\,\vez)_L$-molecule
associated with the ball $B(\vec{0}_n,\,1)$. Let
$\langle g,\,\mu\rangle :=g(\mu).$
This, together with \eqref{eq 4.4},
implies that $g\in(\test)^\ast$ for any $\vez\in(0,\,\fz)$.
Hence, $g\in\tez$ and
\begin{equation}\label{eq 4.4x}
\langle g,\,\mu\rangle_\cm=\langle g,\,\mu\rangle=g(\mu).
\end{equation}

Next, we show that
\begin{equation}\label{eq 2-7}
\|g\|_{\dbmo}\ls \|g\|_{(\vhp)^\ast}.
\end{equation}
We first claim that, for any $B\st\rn$, $\vz\in L^2(B)$ with
$\|\vz\|_{L^2(B)}=1$,
$$\frac{|B|^{\frac12}}{\|\chi_B\|_{\vp}}(I-e^{r_B^2L})^M(\vz)$$
is a harmless positive constant multiple of a $(p(\cdot),\,M,\,\vez)_L$-molecule.
If this claim holds true, then, by Proposition \ref{pro 4}, \eqref{eq 4.x0}, \eqref{eq 4.4x}
and \eqref{eq 4.4}, we conclude that, for any $\vz\in L^2(B)$ with
$\|\vz\|_{L^2(B)}=1$,
\begin{align*}
&\lf|\frac{|B|^{\frac12}}{\|\chi_B\|_{\vp}}\int_B
\lf(I-e^{-r_B^2L^\ast}\r)^M(g)(x)\vz(x)\,dx\r|\\
&\hs=\lf|\lf\langle g,\,\frac{|B|^{\frac12}}{\|\chi_B\|_{\vp}}
\lf(I-e^{-r_B^2L}\r)^M(\vz)\r\rangle_\cm\r|\ls\|g\|_{(\vhp)^\ast},
\end{align*}
which implies that, for any ball $B\st\rn$,
\begin{align*}
\frac{|B|^{\frac12}}{\|\chi_B\|_{\vp}}\lf[\int_B\lf|\lf(I-e^{-r_B^2L^\ast}\r)^M(g)(x)\r|^2\,dx\r]^{\frac12}
\ls\|g\|_{(\vhp)^\ast}.
\end{align*}
Thus, \eqref{eq 2-7} holds true.

Therefore, to prove \eqref{eq 2-7}, it remains to show the above claim. Indeed,
when $k\in\{0,\,\ldots,\,M\}$, by the Minkowski inequality
and Remark \ref{rem Assu}(iii), we find that,
for any $j\in\zz_+\cap[2,\,\fz)$,
\begin{align}\label{eq 4.5}
&\lf\|\frac{|B|^{\frac12}}{\|\chi_B\|_{\vp}}(r_B^{-2}L^{-1})^k
\lf(I-e^{-r_B^2L}\r)^M(\vz)\r\|_{L^2(\ujb)}\noz\\
&\hs=\frac{|B|^{\frac12}}{\|\chi_B\|_{\vp}}
\lf\|r_B^{-2k}\lf[
\int_0^{r_B}\cdots\int_0^{r_B}2^kt_1\cdots t_ke^{-(t_1^2+\cdots+t_k^2)L}\,
dt_1\cdots dt_k\r]\r.\noz\\
&\hs\hs\circ (I-e^{r_B^2L})^{M-k}(\vz)\bigg\|_{L^2(\ujb)}\noz\\
&\hs\le\frac{|B|^{\frac12}}{\|\chi_B\|_{\vp}}r_B^{-2k}
\lf[\int_0^{r_B}\cdots\int_0^{r_B}2^kt_1\cdots t_k\r.\noz\\
&\hs\hs\lf.\times\lf\|e^{-(t_1^2+\cdots+t_k^2)L}(I-e^{-r_B^2L})^{M-k}(\vz)\r\|_{L^2(\ujb)}
\,dt_1\cdots dt_k\r]\noz\\
&\hs\ls\frac{|B|^{\frac12}}{\|\chi_B\|_{\vp}}r_B^{-2k}
\int_0^{r_B}\cdots\int_0^{r_B}2^kt_1\cdots t_k e^{-c\frac{(2^jr_B)^2}{r_B^2}}\|\vz\|_{L^2(B)}\,
dt_1\cdots dt_k\noz\\
&\hs\ls\frac{|B|^{\frac12}}{\|\chi_B\|_{\vp}}e^{-c2^{2j}}
\ls\frac{|B|^{\frac12}}{\|\chi_B\|_{\vp}}2^{-j\vez}.
\end{align}
Similarly, when $k\in\{0,\,\ldots,\,M\}$,  we know that, for any $j\in\{0,\,1\}$,
\begin{align*}
\lf\|\frac{|B|^{\frac12}}{\|\chi_B\|_{\vp}}(r_B^{-2}L^{-1})^k
\lf(I-e^{-r_B^2L}\r)^M(\vz)\r\|_{L^2(\ujb)}
\ls|B|^{\frac12}{\|\chi_B\|_{\vp}^{-1}}.
\end{align*}
This, combined with \eqref{eq 4.5}, implies the above claim.

Next, we prove (ii).
To this end, we only need to show that, for any $g\in\dbmo$ and
$f\in H_{L,\,{\rm fin},\,M}^{p(\cdot),\,\vez}(\rn)$ with
$\vez\in(0,\,\fz)$ and $M\in\nn$,
\begin{align}\label{eq 4.6}
|\langle g,\,f\rangle_\cm|\ls\|g\|_{\dbmo}\|f\|_{\vhp}.
\end{align}
Indeed, since the space $H_{L,\,{\rm fin},\,M}^{p(\cdot),\,\vez}(\rn)$
is dense in $H_{L}^{p(\cdot)}(\rn)$ with respect to the quasi-norm $\|\cdot\|_{\vhp}$
(see Proposition \ref{pro 3} and Theorem \ref{thm 1}),
from \eqref{eq 4.6}, we deduce that the linear functional $l_g$ given by
$l_g(f):=\langle g,\,f\rangle_\cm,$
initially defined on $H_{L,\,{\rm fin},\,M}^{p(\cdot),\,\vez}(\rn)$,
has a unique bounded extension to $H_{L}^{p(\cdot)}(\rn)$ and
$$\|l_g\|_{(H_L^{p(\cdot)}(\rn))^\ast}\ls\|g\|_{\dbmo}.$$

To prove \eqref{eq 4.6}, let $f\in H_{L,\,{\rm fin},\,M}^{p(\cdot),\,\vez}(\rn)$.
Then it is easy to see that $f\in H_L^{p(\cdot)}(\rn)\cap L^2(\rn)$.
This, together with \eqref{eq-1}, implies that
$$t^2Le^{-t^2L}(f)\in T^{p(\cdot)}(\rnn)\cap T^2(\rnn).$$
By Lemma \ref{lem 2} and Remark \ref{rem 2}(i), we conclude that
there exist $\{\lz_j\}_{j=1}^\fz\st\cc$ and a family $\{a_j\}_{j=1}^\fz$ of
$(p(\cdot),\,\fz)$-atoms, associated with balls $\{B_j\}_{j\in\nn}$ of $\rn$, such that
\begin{align*}
t^2Le^{-t^2L}(f)=\sum_{j=1}^\fz \lz_j a_j\ \ \ \text{in}\ \ T^{p(\cdot)}(\rnn)\cap T^2(\rnn)
\end{align*}
and
\begin{align*}
\ca(\{\lz_j\}_{j\in\nn},\,\{B_j\}_{j\in\nn})\sim
\lf\|t^2Le^{-t^2L}(f)\r\|_{T^{p(\cdot)}(\rnn)}
\sim \|f\|_{\vhp}.
\end{align*}
From this, Lemma \ref{lem 4.2}, the H\"{o}lder inequality, the fact that $\{a_j\}_{j\in\nn}$
are $(p(\cdot),\,\fz)$-atoms, Lemma \ref{lem 4.3} and Remark \ref{rem 2}(ii),
we deduce that, for any $f\in H_{L,\,{\rm fin},\,M}^{p(\cdot),\,\vez}(\rn)$,
\begin{align*}
|\langle g,\,f\rangle_\cm|
&=\lf|C_M\iint_{\rnn}(t^2L^\ast)^Me^{-t^2L^\ast}(g)(x)\ov{t^2Le^{-t^2L}(f)(x)}\,\frac{dx\,dt}{t}\r|\\
&\ls \sum_{j=1}^\fz|\lz_j|\iint_{\rnn}\lf|(t^2L^\ast)^Me^{-t^2L^\ast}(g)(x)\r|
|a_j(x,\,t)|\,\frac{dx\,dt}{t}\\
&\ls \sum_{j=1}^\fz|\lz_j|\lf[\iint_{\wh{B_j}}\lf|(t^2L^\ast)^Me^{-t^2L^\ast}(g)(x)\r|^2\,
\frac{dx\,dt}{t}\r]^{\frac12}
\lf[\iint_{\wh{B_j}}|a_j(x,\,t)|^2\,\frac{dx\,dt}{t}\r]^{\frac12}\\
&\ls \sum_{j=1}^\fz|\lz_j|\|g\|_{\dbmo}\|\chi_{B_j}\|_{\vp}|B_j|^{-\frac12}\|a_j\|_{T^2(\rnn)}\\
&\ls \sum_{j=1}^\fz|\lz_j|\|g\|_{\dbmo}\ls \ca(\{\lz_j\}_{j\in\nn},\,\{B_j\}_{j\in\nn})\|g\|_{\dbmo}\\
&\sim \|f\|_{\vhp}\|g\|_{\dbmo},
\end{align*}
namely, \eqref{eq 4.6} holds true.
This finishes the proof of Theorem \ref{thm 2}.
\end{proof}

\section{Variable Hardy spaces associated with second order divergence form
elliptic operators\label{s5}}
\hskip\parindent
In this section, we study the variable Hardy spaces $\vhp$
associated with second order divergence form elliptic operators $L$ as in \eqref{eq op}.
By making good use of the special structure of the divergence form elliptic operator,
we establish the non-tangential maximal function characterizations of $\vhp$.
Moreover, we establish the boundedness of the associated fractional integrals and Riesz transforms on $\vhp$.

Since $L$ in \eqref{eq op} satisfies Assumptions \ref{as-a} and \ref{as-b} (see Remark \ref{rem Assu*}(i)),
a corresponding theory of the variable Hardy space $\vhp$ with $L$ as in \eqref{eq op},
including its molecular characterization, can be obtained as a special case of
all results presented in previous sections.
Moreover, by \eqref{eq-od}, we have the following observation.
\begin{remark}\label{rem 8}
Let $L$ be as in \eqref{eq op}.
By \cite[Lemma 2.6]{hm09}, we know that,
for any $p\in(p_-(L),\,p_+(L))$, the square function $S_{L,\,k}$, with $k\in\nn$,
in \eqref{eq-1*} is bounded on $L^p(\rn)$,
where the positive constants $p_-(L)$ and $p_+(L)$ are, respectively, as in \eqref{eq bound1}
and \eqref{eq bound2}.
\end{remark}

\subsection{Non-tangential maximal function characterization of $\vhp$\label{s5.1}}
\hskip\parindent
In this subsection, we establish the non-tangential maximal function characterization of $\vhp$
with $L$ as in \eqref{eq op}.
We begin with recalling some notions from \cite{hm09}.

For any $\az\in(0,\,\fz)$, the \emph{non-tangential maximal function $\cn_h^{(\az)}$},
associated with the heat semigroup generated by $L$, is defined by setting,
for any $f\in L^2(\rn)$ and $x\in\rn$,
\begin{align*}
\cn_h^{(\az)}(f)(x):=\sup_{(y,t)\in\bgz_{\az}(x)}\lf[\frac1{(\az t)^n}\int_{B(y,\,\az t)}
\lf|e^{-t^2L}(f)(z)\r|^2\,dz\r]^{\frac12},
\end{align*}
where $\Gamma_\az(x)$ is as in \eqref{eq bgz}.
In particular, when $\az=1$, we simply write $\cn_h$ instead of $\cn_h^{(\az)}$.

Similar to Definition \ref{def vhp}, we introduce the Hardy space $H_{\cn_h}^{p(\cdot)}(\rn)$
as follows.
\begin{definition}\label{def nhp}
Let $p(\cdot)\in \cp(\rn)$ satisfy $p_+\in(0,\,1]$ and $L$ be
the second order divergence form elliptic operator as in \eqref{eq op}.
The \emph{Hardy space} $\nhp$ is defined as the completion of the set
\begin{align*}
\lf\{f\in L^2(\rn):\ \|f\|_{\nhp}:=\|\cn_h(f)\|_{\vp}<\fz\r\}
\end{align*}
with respect to the quasi-norm $\|\cdot\|_{\nhp}$.
\end{definition}

The following theorem establishes the non-tangential maximal function characterization of $\vhp$.
\begin{theorem}\label{thm 3}
Let $p(\cdot)\in C^{\log}(\rn)$ satisfy $p_+\in(0,\,1]$ and $L$ be
the second order divergence form elliptic operator as in \eqref{eq op}.
Then $\vhp$ and $\nhp$ coincide with equivalent quasi-norms.
\end{theorem}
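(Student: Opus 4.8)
The plan is to deduce the two-sided bound $\|\cn_h(f)\|_{\vp}\sim\|S_L(f)\|_{\vp}$, first for $f$ in the $L^2(\rn)$-dense class defining the two spaces and then, by completion, in general, by reducing it to a pair of \emph{weighted, constant-exponent} comparisons and pulling these back via the extrapolation theorem for $\vp$ (see \cite[Theorem 1.3]{cfmp06} or Lemma \ref{lem extro}). First I would record that, since $L$ in \eqref{eq op} satisfies the $L^p$--$L^q$ off-diagonal estimates \eqref{eq-od}, it falls into the framework of \cite{bckyy13a}; specializing the Musielak--Orlicz function there to $\varphi(x,t):=w(x)t^{p_0}$, so that $L^{\varphi}(\rn)=L^{p_0}(\rn,\,w)$, the equivalence between the area-function and the non-tangential-maximal-function characterizations of $H_{\varphi,L}(\rn)$ established in \cite{bckyy13a} supplies a constant exponent $p_0\in(0,\,p_-)$ and, for each weight $w$ in the relevant Muckenhoupt class (with parameters tied to $p_-(L)$, $p_+(L)$ and $p_0$ as in \eqref{eq bound1}--\eqref{eq bound2}), a positive constant $C$ depending only on the corresponding $A_q$-characteristic of $w$, such that, for all $f\in L^2(\rn)$,
$$\lf[\int_\rn[\cn_h(f)(x)]^{p_0}w(x)\,dx\r]^{1/p_0}\le C\lf[\int_\rn[S_L(f)(x)]^{p_0}w(x)\,dx\r]^{1/p_0}$$
together with the reverse inequality.

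With these weighted estimates in hand, the second step is purely a matter of extrapolation. Because $p(\cdot)\in C^{\log}(\rn)$ with $p_+\in(0,\,1]$, one chooses $p_0\in(0,\,p_-)$ small enough that $p(\cdot)/p_0$ satisfies the log-H\"older and lower-index requirements of the extrapolation theorem and the admissible class of weights above is wide enough to meet its hypotheses; applying the theorem to the pair $(\cn_h(f),\,S_L(f))$ yields $\|\cn_h(f)\|_{\vp}\ls\|S_L(f)\|_{\vp}$, and applying it to $(S_L(f),\,\cn_h(f))$ yields the reverse, the constants being independent of $f$. Here one uses Remark \ref{rem 1}(iii) to match the $|g|^{p_0}$-normalization in the extrapolation statement with $\|\cdot\|_{\vp}$, and, if needed, Lemma \ref{lem fs} to absorb the change of aperture between $\cn_h^{(\az)}$ for different $\az$. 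Finally, a short density argument --- using that $\{f\in L^2(\rn):\,\|S_L(f)\|_{\vp}<\fz\}$ and $\{f\in L^2(\rn):\,\|\cn_h(f)\|_{\vp}<\fz\}$ are, by definition, dense in $\vhp$ and $\nhp$ respectively, together with the $L^2$-boundedness of $S_L$ (see \eqref{eq-1}) and of $\cn_h$ (see \cite{hm09}) --- shows that these two subspaces of $L^2(\rn)$ coincide and that the two quasi-norms are equivalent on them, hence identifies the two completions.

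The hard part will be the first step: locating (and, where necessary, adapting to the present non-self-adjoint setting) the weighted constant-exponent comparison $\|\cn_h(f)\|_{L^{p_0}(w)}\sim\|S_L(f)\|_{L^{p_0}(w)}$ with explicit dependence of the constants on the Muckenhoupt characteristic of $w$, and keeping the range of admissible weights compatible with the extrapolation hypotheses. Of the two directions, $\|S_L(f)\|_{L^{p_0}(w)}\ls\|\cn_h(f)\|_{L^{p_0}(w)}$ is the softer one; the reverse bound $\|\cn_h(f)\|_{L^{p_0}(w)}\ls\|S_L(f)\|_{L^{p_0}(w)}$ is where the divergence structure of $L$ is indispensable --- through Caccioppoli-type (reverse H\"older) estimates for weak solutions of $\pa_t u+Lu=0$ combined with the $L^2$--$L^2$ off-diagonal bounds --- and it is precisely this inequality that is borrowed from \cite{bckyy13a} (and, in the unweighted $p=1$ case, goes back to \cite{hm09,hmm11}).
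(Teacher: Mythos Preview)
Your overall strategy---reduce to weighted constant-exponent inequalities and extrapolate via Lemma~\ref{lem extro}---matches the paper for one of the two inclusions but diverges for the other, and you have the roles of the two directions reversed.

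For $\|S_L(f)\|_{\vp}\ls\|\cn_h(f)\|_{\vp}$ the paper does essentially what you propose: it quotes from \cite{bckyy13a} the weighted bound $\int[\wz{\cs}_h(f)]^{p_0}w\le C_{(w)}\int[\cn_h(f)]^{p_0}w$ for $w\in A_1(\rn)$, combines it with the pointwise inequality $S_L(f)\ls\wz{\cs}_h(f)$, and extrapolates. But \emph{this} is the direction that needs the divergence structure of $L$: the weighted bound from \cite{bckyy13a} rests on a good-$\lambda$ inequality whose proof uses the Caccioppoli inequality for solutions of $\pa_tu+Lu=0$. You have the difficulty assessment backward---see Remark~\ref{rem-maximal}(i), which says explicitly that the inclusion $\vhp\st\nhp$ (i.e., $\|\cn_h(f)\|\ls\|S_L(f)\|$) holds for abstract $L$ satisfying only Assumptions~\ref{as-a} and~\ref{as-b}, while the reverse inclusion is open in that generality.

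For $\|\cn_h(f)\|_{\vp}\ls\|S_L(f)\|_{\vp}$ the paper does \emph{not} extrapolate a direct weighted comparison $\|\cn_h(f)\|_{L^{p_0}(w)}\ls\|S_L(f)\|_{L^{p_0}(w)}$. Instead it splits: first extrapolate the soft aperture-type bound $\int[\cn_h(f)]^{p_0}w\le C_{(w)}\int[\mr_h(f)]^{p_0}w$ from \cite{bckyy13a} to get $\|\cn_h(f)\|_{\vp}\ls\|\mr_h(f)\|_{\vp}$; then prove $\|\mr_h(f)\|_{\vp}\ls\|f\|_{\vhp}$ by a direct molecular argument via Corollary~\ref{cor 2}, checking $\|\mr_h(m)\|_{L^2(\ujb)}\ls 2^{-j\tz}|2^jB|^{1/2}\|\chi_B\|_{\vp}^{-1}$ for each molecule $m$. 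Your pure-extrapolation route for this half would require the constant in the weighted Musielak--Orlicz equivalence of \cite{bckyy13a} (specialized to $\vz(x,t)=w(x)t^{p_0}$) to depend only on $A_1(w)$, which is not stated there in that form; the paper's molecular argument avoids this issue and has the bonus of working for abstract $L$.
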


\begin{remark}\label{rem-maximal}
\begin{enumerate}
\item[(i)]
The proof of Theorem \ref{thm 3} divides into two steps.
Step 1 is to show $\nhp\st\vhp$ and step 2 is the proof of the inverse
inclusion. The proof of step 1
relies on some known results, from \cite{bckyy13a,yy13}, which are essentially
deduced from a good-$\lambda$ inequality for $\cn_h$ whose proof is mainly
based on the special structure of the operator $L=-\div(A\nabla)$,
namely, the divergence form, and on some particular PDE techniques
(for example, the Caccioppoli inequalities for the solutions of parabolic
and elliptic systems; see also \cite{hm09} for some details).
If $L$ is merely an abstract operator satisfying Assumptions \ref{as-a} and \ref{as-b},
by an argument similar to that used in step 2
(see also \cite[Section 5.3]{jy10} and \cite[Theorem 7.5]{bckyy13a}),
we can establish the inclusion $\vhp\st\nhp$
(see \cite[Proposition 4.7]{hlmmy11} for a similar result).
However, we do not know how to prove the inverse inclusion
without invoking the special structure of $L$, which is still open.

\item[(ii)] Recently, Song and Yan \cite{sy16} established the non-tangential
maximal function characterization, via the atomic characterization,
of Hardy spaces associated with
non-negative self-adjoint operators $\wz L$ having Gaussian upper bounds
(see Remark \ref{rem Assu*}(ii)),
which was further generalized to the variable Hardy spaces $H_{\wz L}^{p(\cdot)}(\rn)$ in \cite{zy15}.
Their proof depends on a modification of a technique due to
Calder\'{o}n \cite{cal77}, which is different from the technique used in the
setting of second order divergence elliptic operators
(see, for example, \cite{hm09,jy10}).

\item[(iii)] Notice that, in \cite[Section 7]{hm09}, Hofmann and Mayboroda
established equivalent characterizations of the Hardy spaces $H_L^1(\rn)$
associated with the second order divergence form elliptic operators $L$
via both $\cn_h$ and the non-tangential maximal function $\cn_P$ associated to the Poisson
semigroup $\{e^{-t\sqrt{L}}\}_{t>0}$, which is defined by setting, for any $f\in L^2(\rn)$
and $x\in\rn$,
\begin{align*}
\cn_P(f)(x):=\sup_{(y,\,t)\in\bgz(x)}\lf[\frac{1}{t^n}\int_{B(y,\,t)}
\lf|e^{-t\sqrt{L}}(f)(x)\r|^2\,dx\r]^{\frac12},
\end{align*}
where $\bgz(x)$ is as in \eqref{eq bgz} with $\az=1$ (see also \cite[Section 5]{jy10}
and \cite[Theorem 7.5]{bckyy13a}). Motivated by this, we can
define the Hardy spaces $H^{p(\cdot)}_{\cn_P}(\rn)$ in a way similar to
that used in Definition \ref{def nhp}. It is natural to ask whether or not
these spaces $\vhp$ and $H^{p(\cdot)}_{\cn_P}(\rn)$ coincide with equivalent quasi-norms.
More generally, if $L$ is an abstract operator satisfying Assumptions \ref{as-a}
and \ref{as-b}, motivated by \cite{hlmmy11,hm09,jy10,bckyy13a}, it is natural
to ask whether or not one can characterize $\vhp$ via the square function
$S_{P,\,L}$ associated with the Poisson semigroup $\{e^{-t\sqrt{L}}\}_{t>0}$.
To restrict the length of this article, we address these problems in another
forthcoming article.

\item[(iv)]
Particularly, if $p(\cdot)\equiv p\in(0,\,1]$ is a constant, then Theorem \ref{thm 3}
was already obtained in \cite[Theorem 5.2]{jy10}.
\end{enumerate}
\end{remark}

To prove Theorem \ref{thm 3}, we first recall some auxiliary functions introduced in \cite{hm09}.
For any $f\in L^2(\rn)$ and $x\in\rn$, let
\begin{align*}
\mr_h(f)(x):=\sup_{t\in(0,\,\fz)}\lf[\frac1{t^n}\int_{B(x,\,t)}
\lf|e^{-t^2L}(f)(y)\r|^2\,dy\r]^{\frac12}
\end{align*}
and
\begin{align*}
\wz{\cs}_h(f)(x):=\lf[\iint_{\bgz(x)}\lf|t\nabla e^{-t^2L}(f)(y)\r|^2\dydt\r]^{\frac12},
\end{align*}
where $\bgz_\az(x)$ is as in \eqref{eq bgz}.

Let $q\in[1,\,\fz)$. Recall that a non-negative and locally integrable function $w$ on $\rn$
is said to belong to the \emph{class $A_q(\rn)$ of Muckenhoupt weights}, denoted by $w\in A_q(\rn)$,
if, when $q\in(1,\fz)$,
\begin{align*}
A_q(w):=\sup_{B\st\rn}
\frac{1}{|B|}\int_B w(x)\,dx\lf\{\frac{1}{|B|}\int_B [w(x)]^{-\frac{1}{q-1}}\,dx\r\}^{q-1}<\fz
\end{align*}
or
\begin{align*}
A_1(w):=\sup_{B\st\rn}\frac{1}{|B|}\int_B w(x)\,dx\lf\{\essinf_{x\in B}w(x)\r\}^{-1}<\fz,
\end{align*}
where the suprema are taken over all balls $B$ of $\rn$.

We also need the following lemma which is
called the extrapolation theorem for $\vp$ (see, for example,
\cite[Theorem 1.3]{cfmp06} and \cite[Theorem 7.2.1]{dhhr11})
and plays a key role in the proof of Theorem \ref{thm 3}.
\begin{lemma}[\cite{dhhr11}]\label{lem extro}
Let $\cf$ be a family of pairs of measurable functions on $\rn$ and $\Omega\st\rn$ an open set.
Assume that, for some $p_0\in(0,\,\fz)$ and any $w\in A_1(\rn)$,
\begin{equation*}
\int_\Omega|f(x)|^{p_0}w(x)\,dx\le C_{(w)}\int_\Omega|g(x)|^{p_0}w(x)\,dx\ \ \
\text{for any}\ \ (f,\,g)\in\cf,
\end{equation*}
where the positive constant $C_{(w)}$ depends only on $A_1(w)$.
Let $p(\cdot)\in C^{\log}(\rn)$ such that $p_-\in(p_0,\,\fz)$.
Then there exists a positive constant $C$ such that, for any $(f,\,g)\in\cf$,
\begin{equation*}
\|f\|_{\vp}\le C\|g\|_{\vp}.
\end{equation*}
\end{lemma}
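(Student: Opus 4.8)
The plan is to derive this from the Rubio de Francia extrapolation machinery, essentially as in \cite{cfmp06,dhhr11}. First I would reduce to the case $p_0=1$. By Remark \ref{rem 1}(iii), $\|f\|_{\vp}=\lf\||f|^{p_0}\r\|_{L^{p(\cdot)/p_0}(\rn)}^{1/p_0}$ and likewise for $g$; moreover $p(\cdot)/p_0\in C^{\log}(\rn)$ with $(p(\cdot)/p_0)_-=p_-/p_0>1$, and the hypothesis, applied to the pairs $(|f|^{p_0},|g|^{p_0})$, says precisely that $\int_\Omega Fw\le C_{(w)}\int_\Omega Gw$ for all $w\in A_1(\rn)$ and all $(F,G)$ in the rescaled family, with $C_{(w)}$ still depending only on $A_1(w)$. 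Replacing $p(\cdot)$ by $p(\cdot)/p_0$ and $\cf$ by this rescaled family (and, as we may, assuming that $f$ and $g$ vanish outside $\Omega$), it suffices to treat the case $p_0=1$, $p_-\in(1,\,\fz)$.

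Next I would set up the duality. Since $p(\cdot)\in C^{\log}(\rn)$ with $1<p_-\le p_+<\fz$, the associate space of $\vp$ is $L^{p'(\cdot)}(\rn)$, up to equivalent norms, and the generalized H\"older inequality holds (see, e.g., \cite[Chapter 2]{cf13} or \cite[Chapter 3]{dhhr11}); in particular, there is $C\in(0,\,\fz)$ such that, for any measurable $F$,
$$\|F\|_{\vp}\le C\sup\lf\{\int_\Omega|F(x)|h(x)\,dx:\ h\ge0,\ \|h\|_{L^{p'(\cdot)}(\rn)}\le1\r\}.$$
Also $p'(\cdot)\in C^{\log}(\rn)$ with exponents in $(1,\,\fz)$, so $\cm$ is bounded on $L^{p'(\cdot)}(\rn)$ (see \cite[Theorem 4.3.8]{dhhr11}); write $\|\cm\|$ for its operator norm, noting $\|\cm\|\ge1$. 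For any nonnegative $h\in L^{p'(\cdot)}(\rn)$ I would introduce the Rubio de Francia iteration
$$\mathcal{R}h:=\sum_{k=0}^\fz\frac{\cm^k h}{(2\|\cm\|)^k},$$
where $\cm^0 h:=h$ and $\cm^k$ denotes the $k$-fold composition, and verify the three standard properties: (a) $h\le\mathcal{R}h$ pointwise; (b) $\|\mathcal{R}h\|_{L^{p'(\cdot)}(\rn)}\le2\|h\|_{L^{p'(\cdot)}(\rn)}$, since $\|\cm^k h\|_{L^{p'(\cdot)}(\rn)}\le\|\cm\|^k\|h\|_{L^{p'(\cdot)}(\rn)}$; and (c) by the sublinearity of $\cm$, $\cm(\mathcal{R}h)\le2\|\cm\|\,\mathcal{R}h$, so $\mathcal{R}h\in A_1(\rn)$ with $A_1(\mathcal{R}h)$ bounded by a constant depending only on $\|\cm\|$ and hence independent of $h$.

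Finally I would combine these. Given $(f,g)\in\cf$ (the case $\|g\|_{\vp}=\fz$ being trivial) and $h\ge0$ with $\|h\|_{L^{p'(\cdot)}(\rn)}\le1$, using (a), the hypothesis with $w:=\mathcal{R}h$ (whose constant $C_{(\mathcal{R}h)}$ depends only on $A_1(\mathcal{R}h)$ and is therefore bounded uniformly in $h$), the generalized H\"older inequality and (b), one obtains
$$\int_\Omega|f|\,h\le\int_\Omega|f|\,\mathcal{R}h\le C_{(\mathcal{R}h)}\int_\Omega|g|\,\mathcal{R}h\ls\|g\|_{\vp}\|\mathcal{R}h\|_{L^{p'(\cdot)}(\rn)}\ls\|g\|_{\vp}.$$
Taking the supremum over all such $h$ and invoking the duality inequality gives $\|f\|_{\vp}\ls\|g\|_{\vp}$; undoing the reduction of the first step completes the argument. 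I expect the main obstacle to be property (c) --- ensuring that $\mathcal{R}$ produces an $A_1(\rn)$ weight with constant controlled uniformly in $h$ --- together with its prerequisite, the boundedness of $\cm$ on $L^{p'(\cdot)}(\rn)$; this is exactly where the $\log$-H\"older continuity of $p(\cdot)$ and the strict inequality $p_->p_0$ are used. The associate-space duality and the generalized H\"older inequality for variable Lebesgue spaces are the other essential ingredients, but they are by now standard.
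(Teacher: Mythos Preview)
The paper does not supply its own proof of this lemma; it simply quotes it as the extrapolation theorem for variable Lebesgue spaces, referring to \cite[Theorem 1.3]{cfmp06} and \cite[Theorem 7.2.1]{dhhr11}. Your argument is the standard Rubio de Francia proof used in those references (rescaling to $p_0=1$, duality with $L^{p'(\cdot)}(\rn)$, the iteration $\mathcal{R}h$ producing an $A_1$ weight with uniformly controlled constant, and the generalized H\"older inequality), and it is correct.
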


We are now in a position to prove Theorem \ref{thm 3}.
\begin{proof}[Proof of Theorem \ref{thm 3}]
We first prove that, for any $f\in\nhp\cap L^2(\rn)$,
\begin{align}\label{eq 5.0x}
\|f\|_{\vhp}\ls \|f\|_{\nhp}.
\end{align}

Indeed, by \cite[Lemma 5.2]{jy10} (see also \cite[Lemma 5.4]{hm09}), we
find that, for any $f\in L^2(\rn)$ and $x\in\rn$,
$S_L(f)(x)\ls\wz{\cs}_h(f)(x)$.
Hence, for any $f\in\nhp\cap L^2(\rn)$,
\begin{align}\label{eq 5.0}
\|S_L(f)\|_{\vp}\ls\lf\|\wz{\cs}_h(f)\r\|_{\vp}.
\end{align}
On the other hand, from \cite[p.\,116]{bckyy13a}, we deduce that,
for any $w\in A_1(\rn)$, there exists a positive constant $C_{(w)}$, depending on $A_1(w)$,
such that, for any $f\in \nhp\cap L^2(\rn)$,
\begin{align*}
\int_\rn \lf[\wz{\cs}_h(f)(x)\r]^{p_0}w(x)\,dx
\le C_{(w)}\int_\rn\lf[\cn_h(f)(x)\r]^{p_0}w(x)\,dx,
\end{align*}
where $p_0\in(0,\,p_-)$ and $p_-$ is as in \eqref{eq var}.
Combining this and Lemma \ref{lem extro}, we obtain
\begin{align*}
\lf\|\wz{S}_h(f)\r\|_{\vp}\ls \|\cn_h(f)\|_{\vp}.
\end{align*}
By this and \eqref{eq 5.0}, we find that,
for any $f\in \nhp\cap L^2(\rn)$,
\begin{align*}
\|S_L(f)\|_{\vp}\ls\|\cn_h(f)\|_{\vp}.
\end{align*}
This implies \eqref{eq 5.0x}. Therefore,
\begin{align*}
\lf[\nhp\cap L^2(\rn)\r]\st\lf[\vhp\cap L^2(\rn)\r].
\end{align*}

Next, we show the inverse inclusion.
To this end, it suffices to prove that, for any $f\in\vhp\cap L^2(\rn)$,
\begin{align}\label{eq 5.3z}
\|f\|_{\nhp}\ls \|f\|_{\vhp}.
\end{align}

Indeed, from \cite[p.\,117]{bckyy13a}, we deduce that,
for any $w\in A_1(\rn)$, there exists a positive constant $C_{(w)}$, depending on $A_1(w)$,
such that, for any $f\in \vhp\cap L^2(\rn)$,
\begin{align*}
\int_\rn \lf[\cn_h(f)(x)\r]^{p_0}w(x)\,dx
\le C_{(w)}\int_\rn\lf[\mr_h(f)(x)\r]^{p_0}w(x)\,dx,
\end{align*}
where $p_0\in(0,\,p_-)$.
From this and Lemma \ref{lem extro},
it follows that, for any $f\in \vhp\cap L^2(\rn)$,
\begin{align}\label{eq 5.3x}
\|\cn_h(f)\|_{\vp}\ls\|\mr_h(f)\|_{\vp}.
\end{align}

Now we prove that, for any $f\in \vhp\cap L^2(\rn)$,
\begin{align}\label{eq 5.3y}
\|\mr_h(f)\|_{\vp}\ls \|f\|_{\vhp}.
\end{align}
To this end, by the fact that $\mr_h$ is bounded on $L^2(\rn)$ (see \cite[p.\,82]{hm09})
and Corollary \ref{cor 2},
we know that it suffices to prove that,
for any given $M\in\nn\cap(\frac n2[\frac1{p_-}-\frac12],\fz)$
and $\varepsilon\in(\frac{n}{p_-},\,\fz)$,
there exists a positive constant
$\tz\in(\frac{n}{p_-},\,\fz)$ such that,
for any $(p(\cdot),\,M,\,\vez)_L$-molecule $m$,
associated with ball $B:= B(x_B,\,r_B)\st\rn$ with $x_B\in\rn$
and $r_B\in(0,\,\fz)$, and $j\in\zz_+$,
\begin{align}\label{eq 5.x}
\|\mr_h(m)\|_{L^2(\ujb)}\ls 2^{-j\tz}|2^j B|^{\frac12}\|\chi_B\|_{\vp}^{-1},
\end{align}
where, for each $j\in\zz_+$, $U_j(B)$ is as in \eqref{eq ujb}.

Indeed, when $j\in\{0,\,\ldots,\,10\}$, by the boundedness of $\mr_h$ on $L^2(\rn)$,
we know that, for any given $\tz\in(\frac{n}{p_-},\,\fz)$,
\begin{align*}
\|\mr_h(m)\|_{L^2(\ujb)}\le\|m\|_{L^2(\rn)}
\ls 2^{-j\tz}|2^j B|^{\frac12}\|\chi_B\|_{\vp}^{-1}.
\end{align*}

When $j\in\nn\cap[11,\,\fz)$, for any $x\in\ujb$, we write
\begin{align}\label{eq 5.3}
\mr_h(m)(x)
&\le\lf\{\sup_{t\in(0,\,2^{aj-2}r_B]}
+\sup_{t\in(2^{aj-2}r_B,\,\fz)}\r\}
\lf[\frac1{t^n}\int_{B(x,\,t)}\lf|
e^{-t^2L}(m)(y)\r|^2\,dy\r]^{\frac12}\\
&=:{\rm I}_j(x)+{\rm II}_j(x),\noz
\end{align}
where $a\in(0,\,1)$ is a positive constant to be fixed below.

To handle ${\rm I}_j$, let $S_j(B):=(2^{j+3}B)\setminus (2^{j-3}B)$,
\begin{align*}
R_j(B):=(2^{j+5}B)\setminus (2^{j-5}B)\ \ \ \text{and}\ \ \
E_j(B):=[R_j(B)]^\com.
\end{align*}
Then $m=m\chi_{R_j(B)}+m\chi_{E_j(B)}$.
When $t\in(0,\,2^{aj-2}r_B]$, it is easy to see that, for any $x\in\ujb$,
$B(x,\,t)\st S_j(B)$ and $\dist(S_j(B),\,E_j(B))\sim 2^{j}r_B$.
By this, Assumption \ref{as-b}
and the fact that $m$ is a $(p(\cdot),\,M,\,\vez)_L$-molecule, we conclude that
\begin{align}\label{eq 5.1}
&\lf\|\sup_{t\in(0,\,2^{aj-2}r_B]}\lf[\frac1{t^n}\int_{B(\cdot,\,t)}\lf|
e^{-t^2L}\lf(m\chi_{E_j(B)}\r)(y)\r|^2\,dy\r]^{\frac12}\r\|_{L^2(\ujb)}\noz\\
&\hs\le\lf\|\sup_{t\in(0,\,2^{aj-2}r_B]}\lf[\frac1{t^n}\int_{S_j(B)}\lf|
e^{-t^2L}\lf(m\chi_{E_j(B)}\r)(y)\r|^2\,dy\r]^{\frac12}\r\|_{L^2(\ujb)}\noz\\
&\hs\ls\lf\|\sup_{t\in(0,\,2^{aj-2}r_B]}t^{-\frac n2}e^{-c\frac{(2^jr_B)^2}{t^2}}
\|m\|_{L^2(E_j(B))}\r\|_{L^2(\ujb)}\noz\\
&\hs\ls\sup_{t\in(0,\,2^{aj-2}r_B]}t^{-\frac n2}\lf(\frac t{2^jr_B}\r)^N|2^jB|^{\frac12}
\|m\|_{L^2(E_j(B))}\noz\\
&\hs\ls 2^{-j[N(1-a)+\frac n2]}|2^jB|^{\frac12}\|\chi_B\|_{\vp}^{-1},
\end{align}
where $N\in\nn\cap(\frac n2,\,\fz)$ is fixed below.
By the fact that $\mr_h$ is bounded on $L^2(\rn)$, we obtain
\begin{align*}
&\lf\|\sup_{t\in(0,\,2^{aj-2}r_B]}\lf[\frac1{t^n}\int_{B(\cdot,\,t)}\lf|
e^{-t^2L}\lf(m\chi_{R_j(B)}\r)(y)\r|^2\,dy\r]^{\frac12}\r\|_{L^2(\ujb)}\\
&\hs\le\lf\|\mr_h\lf(m\chi_{R_j(B)}\r)\r\|_{L^2(\rn)}
\ls\|m\|_{L^2(R_j(B))}
\ls 2^{-j\vez}|2^j B|^{\frac12}\|\chi_B\|_{\vp}^{-1}.
\end{align*}
This, together with \eqref{eq 5.1}, implies that
\begin{align}\label{eq 5.4}
\|{\rm I}_j\|_{L^2(\ujb)}\ls
\lf\{2^{-j\vez}+2^{-j[N(1-a)+\frac n2]}\r\}|2^j B|^{\frac12}\|\chi_B\|_{\vp}^{-1}.
\end{align}

Now we consider the term ${\rm II}_j$. For any $j\in\nn\cap[11,\,\fz)$ and
$x\in\ujb$, we have
\begin{align}\label{eq 5.2}
{\rm II}_j(x)
&=\sup_{t\in(2^{aj-2}r_B,\,\fz)}\lf[\frac1{t^n}\int_{B(x,\,t)}\lf|
\lf(t^2L\r)^{M}e^{-t^2L}\lf(t^{-2M}L^{-M}(m)\r)(y)\r|^2\,dy\r]^{\frac12}\noz\\
&\ls 2^{-2aMj}\noz\\
&\hs\times\sup_{t\in(2^{aj-2}r_B,\,\fz)}\lf[\frac1{t^n}\int_{B(x,\,t)}\lf|
\lf(t^2L\r)^{M}e^{-t^2L}\lf(r_B^{-2M}L^{-M}(m)\r)(y)\r|^2\,dy\r]^{\frac12}\noz\\
&\ls 2^{-2aMj}\mr_h^{(M)}\lf(r_B^{-2M}L^{-M}(m)\r)(x),
\end{align}
where $\mr_h^{(M)}$ is defined by setting, for any $f\in L^2(\rn)$ and $x\in\rn$,
\begin{align*}
\mr_h^{(M)}(f)(x):=\sup_{t\in(0,\,\fz)}\lf[\frac1{t^n}\int_{B(x,\,t)}
\lf|\lf(t^2L\r)^{M}e^{-t^2L}(f)(y)\r|^2\,dy\r]^{\frac12}.
\end{align*}
From \eqref{eq 5.2}, the boundedness of $\mr_h^{(M)}$ on $L^2(\rn)$ (see \cite[p.\,82]{hm09})
and Remark \ref{r-0902}, we deduce that
\begin{align*}
\|{\rm II}_j\|_{L^2(\ujb)}
&\ls 2^{-2aMj}\lf\|\mr_h^{(M)}\lf(r_B^{-2M}L^{-M}(m)\r)\r\|_{L^2(\rn)}\\
&\ls 2^{-2aMj}\lf\|\lf(r_B^{-2}L^{-1}\r)^M(m)\r\|_{L^2(\rn)}
\ls 2^{-j(2aM+\frac n2)}|2^jB|^{\frac12}\|\chi_B\|_{\vp}^{-1}.
\end{align*}
Combining this, \eqref{eq 5.4} and \eqref{eq 5.3}, we find that,
for any $(p(\cdot),\,M,\,\vez)_L$-molecule $m$ and $j\in\nn\cap[11,\,\fz)$,
\begin{align*}
\|\mr_h(m)\|_{L^2(\ujb)}\ls \lf[2^{-j\vez}+2^{-j[N(1-a)+\frac n2]}+2^{-j(2aM+\frac n2)}\r]
|2^jB|^{\frac12}\|\chi_B\|_{\vp}^{-1}.
\end{align*}
Let
$$\tz:=\min\lf\{\vez,\,N(1-a)+\frac{n}2,\,2aM+\frac n2\r\}.$$
By fixing some $M\in\nn\cap(\frac n2[\frac1{p_-}-\frac12],\fz)$,
$a\in(0,\,1)$, $N\in\nn\cap(\frac n2,\,\fz)$
and $\vez\in(\frac{n}{p_-},\,\fz)$, we have
$\tz\in(\frac{n}{p_-},\,\fz)$.
Thus, we obtain \eqref{eq 5.x}, which further implies \eqref{eq 5.3y}.
By \eqref{eq 5.3y} and \eqref{eq 5.3x}, we conclude that \eqref{eq 5.3z} holds true.
This, together with \eqref{eq 5.0x} and
a density argument then finishes the proof of Theorem \ref{thm 3}.
\end{proof}

\subsection{Boundedness of fractional integrals $L^{-\az}$\label{s5.2}}
\hskip\parindent
In this subsection, we show that the fractional integrals $L^{-\az}$ is
bounded from $\vhp$ to $H_L^{q(\cdot)}(\rn)$.
We begin with recalling some notions and well known results.

Let $L$ be the second order divergence form elliptic operator as in \eqref{eq op}
and $\az\in(0,\,\frac{n}2)$.
Recall that the generalized fractional integral $L^{-\az}$ is defined
by setting, for any $f\in L^2(\rn)$ and $x\in\rn$,
\begin{equation*}
L^{-\az}(f)(x):=\frac{1}{\bgz(\az)}\int_0^\fz t^{\az-1}e^{-tL}(f)(x)\,dt.
\end{equation*}

\begin{remark}\label{rem 3}
Let $p_-(L)$ and $p_+(L)$ be, respectively, as in \eqref{eq bound1} and \eqref{eq bound2}.
Then, by \cite[Proposition 5.3]{au07}, we know that, for any
$p_-(L)<p<q<p_+(L)$ and $\az=\frac{n}{2}(\frac1p-\frac1q)$,
$L^{-\az}$ is bounded from $L^p(\rn)$ to $L^q(\rn)$.
\end{remark}

To establish the boundedness of $L^{-\az}$ on $\vhp$, we need the following
technical lemma, which is a slight modification of \cite[Lemma 5.2]{s13}
with cubes therein replaced by balls here.
The proof of Lemma \ref{lem 5.2} is direct, the details being omitted.

\begin{lemma}\label{lem 5.2}
Let $\eta\in(0,\,n)$ and $p(\cdot)\in C^{\log}(\rn)$ with $p_+\in(0,\,\frac n\eta)$.
Define $q(\cdot)\in C^{\log}(\rn)$ by setting, for all $x\in\rn$,
$\frac{1}{q(x)}:=\frac{1}{p(x)}-\frac{\eta}{n}.$
Then there exists a positive constant $C$ such that,
for any sequence $\{B_j\}_{j\in\nn}$ of balls in $\rn$ and $\{\lz_j\}_{j\in\nn}\st\cc$,
\begin{align*}
\lf\|\sum_{j\in\nn}|\lz_j||B_j|^{\frac{\eta}{n}}\chi_{B_j}\r\|_{L^{q(\cdot)}(\rn)}
\le C\lf\|\sum_{j\in\nn}|\lz_j|\chi_{B_j}\r\|_{\vp}.
\end{align*}
\end{lemma}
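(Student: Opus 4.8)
The plan is to deduce Lemma \ref{lem 5.2} from its cube counterpart \cite[Lemma 5.2]{s13}; the ``slight modification'' is only the passage from cubes to balls, which is carried out just as in the proofs of Lemmas \ref{lem key} and \ref{lem 3}. (Equivalently, one could rerun the proof of \cite[Lemma 5.2]{s13} with balls in place of cubes throughout — the essential content being variable-exponent bookkeeping rather than operator theory.) Without loss of generality we may assume $\lz_j\ge0$ for all $j$.

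First I would dispose of two routine points. Since $\frac{1}{q(x)}=\frac{1}{p(x)}-\frac{\eta}{n}$ and $p_+<\frac{n}{\eta}$, we have $q(\cdot)\in\cp(\rn)$ with $q_+<\fz$, and, since $\frac{1}{q(\cdot)}-\frac{1}{p(\cdot)}$ is a constant, $p(\cdot)\in C^{\log}(\rn)$ forces $q(\cdot)\in C^{\log}(\rn)$; hence \cite[Lemma 5.2]{s13} is available for the pair $(p(\cdot),q(\cdot))$ and gives, for all cubes $\{Q_j\}_{j\in\nn}$,
$$\lf\|\sum_{j\in\nn}\lz_j|Q_j|^{\frac{\eta}{n}}\chi_{Q_j}\r\|_{L^{q(\cdot)}(\rn)}
\ls\lf\|\sum_{j\in\nn}\lz_j\chi_{Q_j}\r\|_{\vp}.$$
I would also record the comparison $|B|^{\eta/n}\sim\|\chi_B\|_{\vp}\|\chi_B\|_{L^{q(\cdot)}(\rn)}^{-1}$ for balls $B\st\rn$, which follows from the log-H\"older asymptotics of $\|\chi_B\|_{L^{r(\cdot)}(\rn)}$ together with $\frac{1}{q(\cdot)}=\frac{1}{p(\cdot)}-\frac{\eta}{n}$, in the spirit of Lemma \ref{lem 3}.

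Given balls $\{B_j\}_{j\in\nn}$, choose cubes $\{Q_j\}_{j\in\nn}$ with $B_j\st Q_j\st\sqrt n\,B_j$, so that $|B_j|\sim|Q_j|$ and $\chi_{B_j}\le\chi_{Q_j}\le\chi_{\sqrt n\,B_j}\ls\cm(\chi_{B_j})$. From $B_j\st Q_j$ and $|B_j|\le|Q_j|$ one has $\sum_j\lz_j|B_j|^{\eta/n}\chi_{B_j}\ls\sum_j\lz_j|Q_j|^{\eta/n}\chi_{Q_j}$ pointwise, so the displayed cube inequality yields
$$\lf\|\sum_j\lz_j|B_j|^{\eta/n}\chi_{B_j}\r\|_{L^{q(\cdot)}(\rn)}
\ls\lf\|\sum_j\lz_j\chi_{Q_j}\r\|_{\vp},$$
and the proof reduces to the comparison of plain sums over comparable cubes and balls,
\begin{align}\label{eq-plan-transfer}
\lf\|\sum_j\lz_j\chi_{Q_j}\r\|_{\vp}\ls\lf\|\sum_j\lz_j\chi_{B_j}\r\|_{\vp}.
\end{align}
To prove \eqref{eq-plan-transfer} I would proceed as for Lemma \ref{lem key}: normalize $\|\sum_j\lz_j\chi_{B_j}\|_{\vp}=1$, so that $\int_\rn(\sum_j\lz_j\chi_{B_j}(x))^{p(x)}\,dx=1$ by Remark \ref{rem 1}(ii), and then bound $\int_\rn(\sum_j\lz_j\chi_{Q_j}(x))^{p(x)}\,dx\ls1$ using $\chi_{Q_j}\le\chi_{\sqrt n\,B_j}$, the equivalence $\|\chi_{\sqrt n\,B_j}\|_{\vp}\sim\|\chi_{B_j}\|_{\vp}$ from Lemma \ref{lem 3}, the log-H\"older continuity of $p(\cdot)$, and the maximal-function argument around \eqref{eq key2}.

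I expect \eqref{eq-plan-transfer} --- replacing a ball by a comparable cube (equivalently, by a bounded dilate of itself) inside the quasi-norm of an infinite sum --- to be the main obstacle, precisely because $\|\cdot\|_{\vp}$ is merely a Luxemburg functional and is not translation invariant, so this step genuinely needs the log-H\"older continuity of $p(\cdot)$ and the maximal-function machinery. I would also emphasize what does \emph{not} work: one cannot simply bound $\sum_j\lz_j|B_j|^{\eta/n}\chi_{B_j}$ by the Riesz potential $I_\eta(\sum_j\lz_j\chi_{B_j})$ and invoke boundedness of $I_\eta$ --- although that pointwise bound is valid, the left-hand side is supported in $\bigcup_jB_j$ whereas $I_\eta(\sum_j\lz_j\chi_{B_j})$ has a tail that need not belong to $L^{q(\cdot)}(\rn)$ in the setting of the paper, where $p_+\le1$. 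Retaining the localization to $\bigcup_jB_j$ is exactly why Lemma \ref{lem 5.2} is a ``slight modification'' of \cite[Lemma 5.2]{s13} rather than a corollary of the mapping properties of a classical operator.
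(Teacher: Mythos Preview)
Your approach is correct and matches the paper's intent --- the paper omits the proof entirely, stating only that it is ``a slight modification of \cite[Lemma 5.2]{s13} with cubes therein replaced by balls'', and your ball-to-cube transfer is precisely the device used in the proofs of Lemmas~\ref{lem key} and~\ref{lem 3}. One refinement on \eqref{eq-plan-transfer}: working at the modular level as you describe is awkward, since $(\sum_j\lz_j\chi_{Q_j})^{p(x)}$ does not split over $j$ and the single-ball equivalence $\|\chi_{\sqrt n\,B_j}\|_{\vp}\sim\|\chi_{B_j}\|_{\vp}$ does not by itself control the sum. It is cleaner to stay at the norm level exactly as in the last display of the proof of Lemma~\ref{lem key}: fix $r>\max\{1,1/p_-\}$, use $\chi_{Q_j}\ls[\cm(\chi_{B_j})]^r$ (valid since $\cm(\chi_{B_j})\ge n^{-n/2}$ on $Q_j$), and apply Remark~\ref{rem 1}(iii) together with Lemma~\ref{lem fs} in $L^{rp(\cdot)}(\rn)$ with inner exponent $r>1$ to get
\[
\lf\|\sum_j\lz_j\chi_{Q_j}\r\|_{\vp}
\ls\lf\|\Big\{\sum_j\big[\cm(\lz_j^{1/r}\chi_{B_j})\big]^r\Big\}^{1/r}\r\|_{L^{rp(\cdot)}(\rn)}^{\,r}
\ls\lf\|\sum_j\lz_j\chi_{B_j}\r\|_{\vp}.
\]
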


\begin{theorem}\label{thm 5}
Let $\az\in(0,\,\frac12]$ and
$p(\cdot),\,q(\cdot)\in C^{\log}(\rn)$ with $p_+,\,q_+\in(0,\,1]$.
Assume that, for any $x\in\rn$, it holds true that
$\frac{1}{q(x)}=\frac{1}{p(x)}-\frac{2\az}{n}.$
Then there exists a positive constant $C$ such that, for any $f\in \vhp$,
$\|L^{-\az}(f)\|_{H^{q(\cdot)}_L(\rn)}\le C\|f\|_{\vhp}.$
\end{theorem}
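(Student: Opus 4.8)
The plan is to use the molecular characterization of $\vhp$ established in Theorem \ref{thm 1} together with the criterion of Corollary \ref{cor 2}, applied now with the roles of $p(\cdot)$ and $q(\cdot)$. First I would fix $M\in\nn\cap(\frac n2[\frac1{p_-}-\frac12],\fz)$ and $\vez\in(\frac{n}{p_-},\fz)$ sufficiently large (to be specified at the end) and observe that, by Theorem \ref{thm 1}, the set $\mol$ is dense in $\vhp$ with respect to $\|\cdot\|_{\vhp}$; hence it suffices to prove the inequality $\|L^{-\az}(f)\|_{H_L^{q(\cdot)}(\rn)}\le C\|f\|_{\vhp}$ for every $f\in\mol$, and then extend by density. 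For such an $f$, pick a molecular $(p(\cdot),M,\vez)$-representation $f=\sum_{j}\lz_j m_j$ with $\ca(\{\lz_j\}_j,\{B_j\}_j)\sim\|f\|_{\mhp}\sim\|f\|_{\vhp}$, where $m_j$ is a $\size$-molecule associated with a ball $B_j$. Since $L^{-\az}$ is bounded on $L^2(\rn)$ (indeed $L$ has a bounded holomorphic functional calculus, and one can also invoke Remark \ref{rem 3}), we have $L^{-\az}(f)=\sum_j\lz_j L^{-\az}(m_j)$ in $L^2(\rn)$, and the heart of the matter is to show that each $c\,L^{-\az}(m_j)$ is, up to a uniform constant, a $(q(\cdot),M',\vez')$-molecule associated with the \emph{same} ball $B_j$ but with $\|\chi_{B_j}\|_{\vp}^{-1}$ replaced by $|B_j|^{2\az/n}\|\chi_{B_j}\|_{L^{q(\cdot)}(\rn)}^{-1}$, for appropriate $M',\vez'$.

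The key molecular estimate I would establish is the following: for any $\size$-molecule $m$ associated with a ball $B:=B(x_B,r_B)$, any $k\in\{0,\dots,M\}$ and $i\in\zz_+$,
\begin{align*}
\lf\|(r_B^{-2}L^{-1})^k L^{-\az}(m)\r\|_{L^2(U_i(B))}\ls r_B^{2\az}\,2^{-i\wz\vez}|2^iB|^{1/2}\|\chi_B\|_{\vp}^{-1},
\end{align*}
for some $\wz\vez>\frac{n}{q_-}$, where the factor $r_B^{2\az}$ arises because $L^{-\az}$ has the homogeneity of a fractional power. To prove this I would write $L^{-\az}(m)=\frac1{\bgz(\az)}\int_0^\fz t^{\az-1}e^{-tL}(m)\,dt$ and split the $t$-integral at $t\sim (2^i r_B)^2$ (or at $r_B^2$, depending on the relative position of $U_i(B)$), exactly as in the proofs of Proposition \ref{pro 1} and Lemma \ref{lem 5}: on the region where $\dist(E,F)^2/t$ is large I use the Davies-Gaffney estimates from Assumption \ref{as-b} (and Remark \ref{rem Assu}(i) for $(tL)^ke^{-tL}$, plus $(tL)^ke^{-tL}L^{-\az}=t^{-\az}(tL)^{k+\az}e^{-tL}$ interpreted via the functional calculus) to gain arbitrarily many powers of $2^{-i}$; on the complementary region I use boundedness of $e^{-tL}$ and $(tL)^ke^{-tL}$ on $L^2(\rn)$ together with the size and cancellation conditions $\|(r_B^{-2}L^{-1})^k(m)\|_{L^2(2^lB)}\ls 2^{l\wz\vez}|B|^{1/2}\|\chi_B\|_{\vp}^{-1}$ of the molecule and Remark \ref{r-0902}. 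Carrying out the $t$-integration then produces the power $r_B^{2\az}$ and a decay factor in $2^{-i}$ which can be made to exceed $\frac{n}{q_-}$ by enlarging $M$ and $\vez$ at the outset. Since $L^{-\az}$ commutes with $L$, the requirement $L^{-\az}(m)\in R(L^{M'})$ is also easily checked from $m\in R(L^M)$.

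Once this molecular estimate is in hand, the passage back to a norm estimate is the routine argument already used in Proposition \ref{pro 1}, but now invoking Lemma \ref{lem 5.2} to absorb the extra factors. Precisely, from the displayed bound I get, for each $i$,
\begin{align*}
\lf\|(r_{B_j}^{-2}L^{-1})^k L^{-\az}(m_j)\chi_{U_i(B_j)}\r\|_{L^2(\rn)}
\ls 2^{-i\wz\vez}|B_j|^{2\az/n}|2^iB_j|^{1/2}\|\chi_{B_j}\|_{\vp}^{-1},
\end{align*}
and then, following the chain of inequalities in the proof of Proposition \ref{pro 1} (i.e. using Remark \ref{rem 1}(iii), the subadditivity of $\|\cdot\|_{L^{q(\cdot)/q_-}}$ since $q_-\le1$, Lemma \ref{lem key} with exponent $q:=2$, the pointwise bound $\chi_{2^iB_j}\ls 2^{in}\cm(\chi_{B_j})$, and the vector-valued maximal inequality Lemma \ref{lem fs} with a small auxiliary exponent $r\in(0,q_-)$), I reduce $\|S_L(L^{-\az}(f))\|_{L^{q(\cdot)}(\rn)}$ to a constant times
\begin{align*}
\lf\|\sum_{j\in\nn}\frac{|\lz_j|}{\|\chi_{B_j}\|_{\vp}}|B_j|^{2\az/n}\chi_{B_j}\r\|_{L^{q(\cdot)}(\rn)}.
\end{align*}
Applying Lemma \ref{lem 5.2} with $\eta:=2\az$ (note $\frac1{q(x)}=\frac1{p(x)}-\frac{2\az}{n}$, and $p_+\le1<\frac{n}{2\az}$ since $\az\le\frac12$) this is controlled by $\|\sum_j\frac{|\lz_j|}{\|\chi_{B_j}\|_{\vp}}\chi_{B_j}\|_{\vp}\sim\ca(\{\lz_j\}_j,\{B_j\}_j)\sim\|f\|_{\vhp}$, which, combined with Remark \ref{rem 2}(ii) to guarantee $L^2$-convergence of the molecular series for $L^{-\az}(f)$, yields $L^{-\az}(f)\in H_L^{q(\cdot)}(\rn)$ with the desired bound; the general case follows by density. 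The main obstacle I anticipate is the molecular estimate for $(r_B^{-2}L^{-1})^k L^{-\az}(m)$ on the far annuli $U_i(B)$: one must be careful in choosing how to split the $t$-integral so that both the Davies-Gaffney decay and the integrability of $t^{\az-1}$ near $0$ and of the Gaussian-type tail near $\fz$ cooperate to give a clean power $r_B^{2\az}$ with enough spatial decay, and one must verify at the end that the thresholds on $M$ and $\vez$ needed here are compatible with those needed to invoke Theorem \ref{thm 1} for the domain space $\vhp$.
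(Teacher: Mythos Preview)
Your overall plan is close to the paper's and would work after one repair, but there is a genuine error in a step you treat as routine: the claim that $L^{-\az}$ is bounded on $L^2(\rn)$. The function $z\mapsto z^{-\az}$ is not in $H_\fz(S_\mu^0)$ (it blows up at $0$), so the bounded holomorphic functional calculus does not apply; and Remark~\ref{rem 3} only gives $L^p\to L^q$ boundedness with $p<q$. For $L=-\Delta$ this is just the Riesz potential $I_{2\az}$, which is famously not bounded on $L^2$. You invoke this boundedness twice (to pass the series through $L^{-\az}$, and implicitly again when you write ``$L^2$-convergence of the molecular series for $L^{-\az}(f)$'' at the end), and without it you cannot justify the pointwise inequality $S_L(L^{-\az}(f))\le\sum_j|\lz_j|S_L(L^{-\az}(m_j))$. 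The paper fixes this by choosing $s$ with $\frac1s=\frac12-\frac{2\az}n$, so that $s\in(2,\frac{2n}{n-2}]\subset(p_-(L),p_+(L))$; then $L^{-\az}$ is bounded $L^2\to L^s$ by Remark~\ref{rem 3}, and $S_L$ is bounded on $L^s$ by Remark~\ref{rem 8} (this uses the $L^p$--$L^q$ off-diagonal estimates specific to the divergence-form operator in \eqref{eq op}). With that substitution your density and subadditivity steps go through.

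Apart from this, your route differs from the paper's in one structural way. You aim to show that $r_B^{-2\az}L^{-\az}(m)$ itself satisfies molecular bounds (for all $k\in\{0,\dots,M\}$) and then feed that into the mechanism of Proposition~\ref{pro 1} (i.e.\ estimate \eqref{eq 3.1}) to control $S_L(L^{-\az}(m))$ on annuli; the paper instead estimates $\|S_L(L^{-\az}(m))\|_{L^2(U_k(B))}$ directly via the telescoping identity with $(I-e^{-r_B^2L})^M$ and the $L^q\to L^2$ boundedness of $L^{-\az}$ for a constant $q$ with $\frac1q=\frac12+\frac{2\az}n$ (see \eqref{eq 5.yz}--\eqref{eq 5.y6}). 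Both routes converge to the same summation argument using Lemmas~\ref{lem key}, \ref{lem 5.2} and \ref{lem fs}, and your use of Lemma~\ref{lem 5.2} with $\eta=2\az$ is correct. Your path has the conceptual appeal of staying within the molecular framework, but it costs you an extra layer (first molecule $\Rightarrow$ molecule, then molecule $\Rightarrow$ $S_L$-decay), whereas the paper's single direct estimate on $S_L(L^{-\az}(m))$ avoids proving the full family of bounds on $(r_B^{-2}L^{-1})^kL^{-\az}(m)$.
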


\begin{proof}
Since $\vhp\cap L^2(\rn)$ is dense in $\vhp$, to prove Theorem \ref{thm 5},
we only need to show that, for any $f\in\vhp\cap L^2(\rn)$,
\begin{align}\label{eq 5.xx}
\lf\|S_L(L^{-\az}(f))\r\|_{L^{q(\cdot)}(\rn)}\ls\|f\|_{\vhp}.
\end{align}

From Proposition \ref{pro 2}, we deduce that, for any $f\in\vhp\cap L^2(\rn)$,
$M\in\nn$ and $\vez\in(0,\,\fz)$,
there exist $\{\lz_j\}_{j\in\nn}\st\cc$ and a family $\{m_j\}_{j\in\nn}$ of
$(p(\cdot),\,M,\,\vez)_L$-molecules, associated with balls $\{B_j\}_{j\in\nn}$ of $\rn$,
such that
\begin{align}\label{eq 5.9}
f=\sum_{j=1}^\fz \lz_jm_j\ \ \ \text{in}\ \  L^2(\rn)
\end{align}
and
\begin{align}\label{eq 5.9x}
\ca(\{\lz_j\}_{j\in\nn},\,\{B_j\}_{j\in\nn})\ls\|f\|_{\vhp}.
\end{align}
Let
$\frac1s:=\frac12-\frac{2\az}n$. Then $s\in(2,\,\frac {2n}{n-2}]\st (p_-(L),\,p_+(L))$ due to the
fact that $\az\in(0,\frac12]$ and hence, by Remark \ref{rem 3},
we know that $L^{-\az}$ is bounded from $L^2(\rn)$
to $L^s(\rn)$.
This, together with \eqref{eq 5.9}, implies that
\begin{align}\label{eq 5.y0}
L^{-\az}(f)=\sum_{j=1}^\fz\lz_jL^{-\az}(m_j)\ \ \ \text{in}\ \ L^s(\rn).
\end{align}
By the fact $s\in(2,\,p_+(L))\st(p_-(L),\,p_+(L))$,
we find that $S_L$ is bounded on $L^s(\rn)$ (see Remark \ref{rem 8}).
Combining this, \eqref{eq 5.y0} and the Riesz theorem,
we know that there exists a subsequence of
$\{S_L(\sum_{j=1}^N \lz_jL^{-\az}(m_j))\}_{N\in\nn}$
(without loss of generality, we may use the same notation as the original sequence)
such that, for almost every $x\in\rn$,
$S_L(L^{-\az}(f))(x)=\lim_{N\to\fz}S_L(\sum_{j=1}^N \lz_j L^{-\az}(m_j))(x)$.
Thus, for almost every $x\in\rn$,
$S_L(L^{-\az}(f))(x)\le \sum_{j=1}^\fz|\lz_j|S_L(L^{-\az}(m_j))(x),$
which further implies that
\begin{align}\label{eq 5.y2}
\|S_L(L^{-\az}(f))\|_{L^{q(\cdot)}(\rn)}
&\le\lf\|\sum_{j=1}^\fz|\lz_j|S_L(L^{-\az}(m_j))\r\|_{L^{q(\cdot)}(\rn)}\noz\\
&=\lf\|\sum_{j=1}^\fz\sum_{k=0}^\fz|\lz_j|S_L(L^{-\az}(m_j))\chi_{U_k(B_j)}\r\|_{L^{q(\cdot)}(\rn)}.
\end{align}

To prove Theorem \ref{thm 5}, it suffices to show that
there exist some $M\in\nn$, $\vez\in(0,\,\fz)$
and a positive constant $\tz\in(\frac{n}{p_-},\,\fz)$ such that,
for any $(p(\cdot),\,M,\,\vez)_L$-molecule $m$, associated with a ball $B$ of $\rn$,
and $k\in\zz_+$,
\begin{align}\label{eq 5.y1}
\lf\|S_L(L^{-\az}(m))\r\|_{L^2(U_k(B))}\ls 2^{-k\tz}|2^kB|^{\frac1q}\|\chi_{B}\|_{\vp}^{-1},
\end{align}
where $\frac1q-\frac12=\frac{2\az}{n}$.
Indeed, if \eqref{eq 5.y1} holds true, then, by the H\"{o}lder inequality,
we find that, for any fixed $r\in(1,\,2)$, $j\in\nn$ and $k\in\zz_+$,
\begin{align*}
\lf\|S_L(L^{-\az}(m_j))\chi_{U_k(B_j)}\r\|_{L^r(\rn)}
&\ls|2^kB_j|^{\frac1r-\frac12}\lf\|S_L(L^{-\az}(m_j))\r\|_{L^2(U_k(B))}\\
&\ls 2^{-k\tz}|2^kB_j|^{\frac1r-\frac12}\frac{|2^kB_j|^{\frac1q}}{\|\chi_{B_j}\|_{\vp}}\\
&\sim 2^{-k\tz}|2^kB_j|^{\frac{2\az}{n}}
\frac{|2^kB_j|^{\frac1r}}{\|\chi_{B_j}\|_\vp},
\end{align*}
which implies that
\begin{align*}
\lf\|2^{k\tz}\frac{\|\chi_{B_j}\|_\vp}{|2^kB_j|^{\frac{2\az}{n}}}S_L(L^{-\az}(m_j))
\chi_{U_k(B_j)}\r\|_{L^r(\rn)}\ls |2^kB_j|^{\frac1r}.
\end{align*}
From this, \eqref{eq 5.y2}, Remark \ref{rem 1}(iii), the fact that $p_-<q_-\in(0,\,1)$,
Lemmas \ref{lem key} and \ref{lem 5.2}, it follows that
\begin{align}\label{eq 5.y3x}
&\|S_L(L^{-\az}(f))\|_{L^{q(\cdot)}(\rn)}\noz\\
&\hs\le\lf\|\lf\{\sum_{j=1}^\fz\sum_{k=0}^\fz\lf[|\lz_j|S_L(L^{-\az}(m_j))\chi_{U_k(B_j)}\r]^{q_-}
\r\}^{\frac1{q_-}}\r\|_{L^{q(\cdot)}(\rn)}\noz\\
&\hs\ls\lf\{\sum_{k=0}^\fz 2^{-k\tz q_-}\lf\|\sum_{j=1}^\fz\lf[\frac{|\lz_j|
|2^kB_j|^{\frac{2\az}{n}}}{\|\chi_{B_j}
\|_\vp}\chi_{2^kB_j}\r]^{q_-}\r\|_{L^{\frac{q(\cdot)}{q_-}}(\rn)}\r\}^{\frac1{q_-}}\noz\\
&\hs\ls\lf\{\sum_{k=0}^\fz 2^{-k\tz q_-}\lf\|\sum_{j=1}^\fz\lf[\frac{|\lz_j|}{\|\chi_{B_j}
\|_\vp}\chi_{2^kB_j}\r]^{q_-}\r\|_{L^{\frac{p(\cdot)}{q_-}}(\rn)}\r\}^{\frac1{q_-}}\noz\\
&\hs\ls\lf\{\sum_{k=0}^\fz 2^{-k\tz q_-}\lf\|\lf\{\sum_{j=1}^\fz\lf[\frac{|\lz_j|}
{\|\chi_{B_j}\|}\chi_{2^kB_j}\r]^{p_-}\r\}^{\frac{1}{p_-}}\r\|^{q_-}_{\vp}
\r\}^{\frac1{q_-}}.
\end{align}
Notice that, for any $x\in\rn$,
\begin{align}\label{eq 5.y5}
\chi_{2^kB_j}(x)\le 2^{kn}\cm(\chi_{B_j})(x).
\end{align}
By the fact that $\tz\in(\frac{n}{p_-},\,\fz)$, we can choose a positive constant
$r\in(0,\,p_-)$ such that $\tz\in(\frac{n}{r},\fz)$.
From this, \eqref{eq 5.y5}, \eqref{eq 5.y3x}, Remark \ref{rem 1}(iii)
and Lemma \ref{lem fs}, we deduce that
\begin{align}\label{eq 5.y5x}
&\|S_L(L^{-\az}(f))\|_{L^{q(\cdot)}(\rn)}\noz\\
&\hs\ls \lf\{\sum_{k=0}^\fz 2^{-k\tz q_-}\lf\|\lf\{\sum_{j=1}^\fz2^{\frac{knp_-}{r}}
\lf[\cm\lf(\frac{|\lz_j|^r}{\|\chi_{B_j}\|^r_\vp}\chi_{B_j}\r)\r]^{\frac{p_-}{r}}\r\}^{\frac{r}{p_-}}
\r\|_{L^{\frac{p(\cdot)}{r}}}^{\frac{q_-}{r}}\r\}^{\frac1{q_-}}\noz\\
&\hs\ls\lf\{\sum_{k=0}^\fz 2^{-kq_-(\tz-\frac{n}r)}\lf\|\lf\{\sum_{j=1}^\fz
\lf[\frac{|\lz_j|}{\|\chi_{B_j}\|_\vp}\chi_{B_j}\r]^{p_-}\r\}^{\frac1{p_-}}\r\|_{\vp}^{q_-}
\r\}^{\frac1{q_-}}\noz\\
&\hs\sim\lf\{\sum_{k=0}^\fz 2^{-kq_-(\tz-\frac{n}r)}
\lf[\ca\lf(\{\lz_j\}_{j\in\nn},\,\{B_j\}_{j\in\nn}\r)\r]^{q_-}\r\}^{\frac1{q_-}}\noz\\
&\hs\ls\ca\lf(\{\lz_j\}_{j\in\nn},\,\{B_j\}_{j\in\nn}\r).
\end{align}
From this and \eqref{eq 5.9x}, we deduce \eqref{eq 5.xx}.

To complete the proof of Theorem \ref{thm 5}, we still need to show \eqref{eq 5.y1}.
Indeed, let $\vez\in(\frac nq,\,\fz)$, $M\in\nn$ and
$m$ be a $(p(\cdot),\,M,\,\vez)_L$-molecule associated with ball
$B:= B(x_B,\,r_B)\st\rn$ with $x_B\in\rn$ and $r_B\in(0,\,\fz)$.
Since $\frac1q-\frac12=\frac{2\az}{n}$ and $\az\in(0,\,\frac12]$,
it follows that $q\in [\frac{2n}{n+2},2)\st(p_-(L),\,2)$. Then, by Remark \ref{rem 3}, we
know that $L^{-\az}$ is bounded from $L^q(\rn)$ to $L^2(\rn)$.
From this, the boundedness of $S_L$ on $L^2(\rn)$ (see Remark \ref{rem 8}),
the H\"{o}lder inequality and the fact that $\vez\in(\frac nq,\,\fz)$,
we deduce that, when $k\in\{0,\,\ldots,\,10\}$,
\begin{align}\label{eq 5.yz}
\|S_L(L^{-\az}(m))\|_{L^2(U_k(B))}
&\ls\|m\|_{L^q(\rn)}
\sim\sum_{j=0}^\fz\|m\|_{L^q(\ujb)}\noz\\
&\ls\sum_{j=0}^\fz|2^jB|^{\frac1q-\frac12}\|m\|_{L^2(\ujb)}\noz\\
&\ls\sum_{j=0}^\fz 2^{-j(\vez-\frac{n}q)}|B|^{\frac1q}\|\chi_B\|_{\vp}^{-1}\noz\\
&\ls|B|^{\frac1q}\|\chi_B\|_{\vp}^{-1}.
\end{align}

When $k\in\zz_+\cap[11,\,\fz)$, we write
\begin{align}\label{eq 5.yy}
&\|S_L(L^{-\az}(m))\|_{L^2(U_k(B))}\noz\\
&\hs\le\lf\|S_L\lf(L^{-\az}\lf[I-e^{-r_B^2L}\r]^M(m)\r)\r\|_{L^2(U_k(B))}\noz\\
&\hs\hs+\lf\|S_L\lf(L^{-\az}\lf[I-\lf(I-e^{-r_B^2L}\r)^M\r](m)\r)\r\|_{L^2(U_k(B))}\noz\\
&\hs\ls\lf\|S_L\lf(L^{-\az}\lf[I-e^{-r_B^2L}\r]^M(m)\r)\r\|_{L^2(U_k(B))}\noz\\
&\hs\hs+\sup_{1\le l\le M}\lf\|S_L\lf(L^{-\az}\lf[\frac{l}{M}r_B^2L
e^{-\frac{l}{M}r_B^2L}\r]^M\lf(r_B^{-2}L^{-1}\r)^M(m)\r)\r\|_{L^2(U_k(B))}\noz\\
&\hs=:{\rm I}+{\rm II}.
\end{align}
To estimate the term ${\rm I}$, let $S_k(B):= (2^{k+2}B)\setminus (2^{k-3}B)$ for $k\in\zz_+\cap[11,\,\fz)$.
Then we have
\begin{align*}
{\rm I}
&\le\lf\|S_L\lf(L^{-\az}\lf[I-e^{-r_B^2L}\r]^M\lf(m\chi_{S_k(B)}\r)\r)\r\|_{L^2(U_k(B))}\\
&\hs+\lf\|S_L\lf(L^{-\az}\lf[I-e^{-r_B^2L}\r]^M\lf(m\chi_{[S_k(B)]^\com}\r)\r)\r\|_{L^2(U_k(B))}
=:{\rm I}_1+{\rm I}_2.\noz
\end{align*}
For ${\rm I}_1$, by the fact that $q\in(p_-(L),\,2)$ and \eqref{eq-od},
we find that, for any $t\in(0,\,\fz)$, $e^{-tL}$ is bounded on $L^q(\rn)$.
From this, the boundedness of $S_L$ on $L^2(\rn)$ (see \eqref{eq-1}),
the boundedness of $L^{-\az}$ from $L^q(\rn)$ to $L^2(\rn)$ and
the H\"{o}lder inequality, it follows that
\begin{align}\label{eq 5.y4}
{\rm I}_1
&\ls\lf\|L^{-\az}\lf(I-e^{-r_B^2L}\r)^M\lf(m\chi_{S_k(B)}\r)\r\|_{L^2(\rn)}\noz\\
&\ls\lf\|\lf(I-e^{-r_B^2L}\r)^M\lf(m\chi_{S_k(B)}\r)\r\|_{L^q(\rn)}
\ls\|m\|_{L^q(S_k(B))}\noz\\
&\ls\|m\|_{L^2(S_k(B))}|2^kB|^{\frac1q-\frac12}
\ls 2^{-k\vez}|2^kB|^{\frac1q}\|\chi_B\|_{\vp}^{-1}.
\end{align}
For ${\rm I}_2$, by an argument similar to that used in \cite[pp.\,774-777]{hmm11}, we conclude that
\begin{align*}
{\rm I}_2\ls 2^{-2kM}\lf(2^kr_B\r)^{2\az}\|m\|_{L^2(\rn)}.
\end{align*}
From this and Remark \ref{r-0902}, we deduce that
\begin{align*}
{\rm I}_2
\ls 2^{-2k(M-\az)}r_B^{n(\frac1q-\frac12)}|B|^{\frac12}\|\chi_B\|_{\vp}^{-1}
\ls 2^{-k(2M+\frac n2)}|2^kB|^{\frac1q}\|\chi_B\|_{\vp}^{-1}.
\end{align*}
This, together with \eqref{eq 5.y4}, implies that
\begin{align}\label{eq 5.y6}
{\rm I}\ls \lf[2^{-k\vez}+2^{-k(2M+\frac n2)}\r]|2^kB|^{\frac1q}\|\chi_B\|_{\vp}^{-1}.
\end{align}

By an argument similar to that used in the estimates of ${\rm I}$, we also obtain
\begin{align*}
{\rm II}\ls \lf[2^{-k\vez}+2^{-k(2M+\frac n2)}\r]|2^kB|^{\frac1q}\|\chi_B\|_{\vp}^{-1}.
\end{align*}

Combining this, \eqref{eq 5.y6}, \eqref{eq 5.yy} and \eqref{eq 5.yz},
we know that, for any $k\in\zz_+$ and any $(p(\cdot),\,M,\,\vez)_L$-molecule $m$,
\begin{align*}
\|S_L(L^{-\az}(m))\|_{L^2(U_k(B))}\ls 2^{-k\tz}|2^kB|^{\frac1q}\|\chi_B\|_{\vp}^{-1},
\end{align*}
where $\tz:=\min\{\vez,\,2M+\frac n2\}$. Choosing
$\vez\in(\frac{n}{p_-},\,\fz)\st(\frac nq,\,\fz)$
and $M\in\nn\cap(\frac n2[\frac1{p_-}-\frac12],\,\fz)$, we have $\tz\in(\frac{n}{p_-},\,\fz)$.
Thus, \eqref{eq 5.y1} holds true, which completes the proof of Theorem \ref{thm 5}.
\end{proof}

\begin{remark}\label{rem 6}
As a special case of Theorem \ref{thm 5}, when $p(\cdot)\equiv p$,
$q(\cdot)\equiv q$ and $\frac1p-\frac1q=\frac{2\az}{n}$,
we know that the operator $L^{-\az}$ $(\az\in(0,\,\frac12])$ is bounded from $H_L^{p}(\rn)$
to $H^{q}_L(\rn)$, which was already obtained in \cite[Theorem 7.2]{hmm11}
(see also \cite[Remark 7.3]{jy10}).
\end{remark}

\subsection{Boundedness of the Riesz transform $\nabla L^{-1/2}$\label{s5.3}}
\hskip\parindent
In this subsection, we show that the Riesz transform $\nabla L^{-1/2}$ is bounded
from $\vhp$ to the variable Hardy spaces, denoted by $H^{p(\cdot)}(\rn)$.
We begin with recalling the definition of the Riesz transform $\nabla L^{-1/2}$
and the definition of $H^{p(\cdot)}(\rn)$ introduced in
\cite{ns12} (see also \cite[Definition 3.2]{cw14}).

Let $L$ be the second order divergence form elliptic operator as in \eqref{eq op}.
The Riesz transform $\nabla L^{-1/2}$ is defined by setting, for any $f\in L^2(\rn)$
and $x\in\rn$,
\begin{align*}
\nabla L^{-1/2}(f)(x):=\frac{1}{2\sqrt{\pi}}\int_0^\fz\nabla e^{-sL}(f)(x)\frac{ds}{\sqrt{s}}.
\end{align*}
By \cite[Theorem 1.4]{ahlmt02}, we know that the domain of $L^{1/2}$ coincides
with the Sobolev space $H^1(\rn)$. Hence, for any $f\in L^2(\rn)$,
$L^{-1/2}(f)\in H^1(\rn)$ and $\nabla L^{-1/2}(f)$ stands for the
distributional derivatives of $L^{-1/2}(f)$.

Let $\cs(\rn)$ be the \emph{space of all Schwartz functions} and $\cs'(\rn)$
the \emph{space of all Schwartz distributions}. For any $N\in\nn$, define
\begin{align*}
\cf_N(\rn):=\lf\{\psi\in\cs(\rn):\ \sum_{\bz\in\zz_+^n,\,|\bz|\le N}\sup_{x\in\rn}(1+|x|)^N
\lf|D^\bz\psi(x)\r|\le 1\r\},
\end{align*}
where, for any $\bz:=(\bz_1,\,\ldots,\,\bz_n)\in\zz_+^n$,
$|\bz|:=\bz_1+\cdots +\bz_n$ and $D^\bz:=(\frac{\partial}{\partial x_1})^{\bz_1}\cdots
(\frac{\partial}{\partial x_n})^{\bz_n}$.
For any $N\in\nn$, the \emph{grand maximal function} $\cm_F$ is defined by setting,
for any $f\in\cs'(\rn)$ and $x\in\rn$,
\begin{align*}
\cm_N(f)(x):=\sup\{|\psi_t\ast f(x)|:\ t\in(0,\,\fz),\, \psi\in \cf_N(\rn)\},
\end{align*}
where, for any $t\in(0,\,\fz)$ and $\xi\in\rn$, $\psi_t(\xi):=t^{-n}\psi(\xi/t)$.

\begin{definition}\label{def hardy}
Let $p(\cdot)\in C^{\log}(\rn)$ and $N\in(\frac{n}{p_-}+n+1,\,\fz)$.
Then the \emph{variable Hardy space} $H^{p(\cdot)}(\rn)$ is defined by setting
\begin{align*}
H^{p(\cdot)}(\rn):=\lf\{f\in\cs'(\rn):\ \|f\|_{H^{p(\cdot)}(\rn)}:=\|\cm_N(f)\|_{\vp}<\fz\r\}.
\end{align*}
\end{definition}

\begin{remark}\label{rem 5}
In \cite[Theorem 3.3]{ns12}, Nakai and Sawano introduced the variable Hardy space $H^{p(\cdot)}(\rn)$ with
$p(\cdot)\in C^{\log}(\rn)$ and, in \cite[Theorem 3.3]{ns12}, proved that
the definition of $H^{p(\cdot)}(\rn)$ is independent of $N$ as long as $N$ is sufficiently large.
Independently, Cruz-Uribe and Wang \cite{cw14} also introduced and studied the variable Hardy space
$H^{p(\cdot)}(\rn)$ but with some slightly weaker assumptions on $p(\cdot)$; moreover,
in \cite[Theorem 3.1]{cw14}, they showed that the definition of $H^{p(\cdot)}(\rn)$
is independent of $N\in(\frac{n}{p_-}+n+1,\,\fz)$.
\end{remark}

An important fact of $H^{p(\cdot)}(\rn)$ is that every element in $H^{p(\cdot)}(\rn)$ admits an
atomic decomposition (see \cite{ns12,cw14}).
Let us first recall the definition of $(p(\cdot),\,q,\,s)$-atoms as follows.
Recall that, for any $s\in\rr$, $\lfloor s\rfloor$ denotes the maximal integer not more
than $s$.

\begin{definition}[\cite{ns12,cw14}]\label{def atom}
Let $p(\cdot)\in\cp(\rn)$, $q\in (p_+,\fz]\cap[1,\fz)$ and $s:=\lfloor\frac{n}{p_-}-n \rfloor$.
A measurable function $a$ on $\rn$ is called a $(p(\cdot),\,q,\,s)$-atom
associated with ball $B$ of $\rn$ if
\begin{enumerate}
\item[(i)] $\supp a\st B$;

\item[(ii)] $\|a\|_{L^q(\rn)}\le |B|^{\frac1q}\|\chi_B\|^{-1}_{\vp}$;

\item[(iii)] for any $\az\in\zz_+^n$ with $|\az|\le s$, $\int_\rn a(x)x^\az\,dx=0$.
\end{enumerate}
\end{definition}

The following lemma is just \cite[Theorem 1.1]{s13}, which
establishes the atomic decomposition of $H^{p(\cdot)}(\rn)$ (see also \cite{ns12}).

\begin{lemma}[\cite{s13}]\label{lem 5.31}
Let $p(\cdot)\in C^{\log}(\rn)$ with $p_+\in(0,\,1]$.
\begin{enumerate}
\item[{\rm (i)}] Let $q\in(p_+,\,\fz]\cap[1,\,\fz]$ and $s:=\lfloor \frac{n}{p_-}-n\rfloor$.
Then there exists a positive constant $C$ such that,
for any $\{\lz_j\}_{j\in\nn}\st\cc$ and any family $\{a_j\}_{j\in\nn}$
of $(p(\cdot),\,q,\,s)$-atoms, associated with balls $\{B_j\}_{j\in\nn}$ of $\rn$, such that
$\ca(\{\lz_j\}_{j\in\nn},\,\{B_j\}_{j\in\nn})<\fz$, it holds true that
$f:=\sum_{j\in\nn}\lz_ja_j$ converges in $H^{p(\cdot)}(\rn)$ and
$\|f\|_{H^{p(\cdot)}(\rn)}\le C\ca(\{\lz_j\}_{j\in\nn},\,\{B_j\}_{j\in\nn}).$

\item[{\rm (ii)}] Let $s\in\zz_+$. For any $f\in H^{p(\cdot)}(\rn)$,
there exists a decomposition
$f=\sum_{j=1}^\fz\lz_ja_j$ in $\cs'(\rn),$
where $\{\lz_j\}_{j\in\nn}\st\cc$ and $\{a_j\}_{j\in\nn}$ is a family
of $(p(\cdot),\,\fz,\,s)$-atoms associated with balls $\{B_j\}_{j\in\nn}$ of $\rn$.
Moreover, there exists a positive constant $C$ such that, for any $f\in H^{p(\cdot)}(\rn)$,
\begin{align*}
\ca(\{\lz_j\}_{j\in\nn},\,\{B_j\}_{j\in\nn})\le C\|f\|_{H^{p(\cdot)}(\rn)}.
\end{align*}
\end{enumerate}
\end{lemma}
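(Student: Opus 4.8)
The plan is to observe that this is precisely \cite[Theorem 1.1]{s13} (proved there by sharpening \cite{ns12}), and to indicate how one would argue. For part (i), the heart of the matter is a pointwise estimate on the grand maximal function $\cm_N(a)$ of a single $(p(\cdot),q,s)$-atom $a$ associated with a ball $B:=B(x_B,r_B)$. On $2B$ one bounds $\cm_N(a)$ by a constant times $\cm(a)$ and uses the $L^q$-boundedness of $\cm$ together with $q>p_+$ to get $\|\cm_N(a)\|_{L^q(2B)}\ls\|a\|_{L^q(\rn)}\le|B|^{1/q}\|\chi_B\|_{\vp}^{-1}$. On each annulus $U_i(B)$ with $i\ge2$, one subtracts the degree-$s$ Taylor polynomial of the test function at $x_B$ and uses the vanishing moments of $a$ up to order $s=\lfloor n/p_--n\rfloor$, obtaining $\|\cm_N(a)\|_{L^q(U_i(B))}\ls2^{-i(n+s+1)}|2^iB|^{1/q}\|\chi_B\|_{\vp}^{-1}$, where the key point is $n+s+1>n/p_-$. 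From here the argument is structurally identical to the proof of Proposition \ref{pro 1}: since $p_-\le1$ (hence $\uz p=p_-$) one splits the $\vp$-quasi-norm of $\cm_N(\sum_j\lz_ja_j)\le\sum_j|\lz_j|\sum_i\cm_N(a_j)\chi_{U_i(B_j)}$ over the annuli, applies the slight variant of \cite[Lemma 4.1]{s13} stated as Lemma \ref{lem key}, then Lemma \ref{lem 3} to absorb the growth factor $2^{in/p_-}$ of $\|\chi_{2^iB_j}\|_{\vp}/\|\chi_{B_j}\|_{\vp}$, then $\chi_{2^iB_j}\ls2^{in}\cm(\chi_{B_j})$ and the Fefferman--Stein inequality of Lemma \ref{lem fs}, and finally sums the geometric series in $i$ (which converges because $n+s+1-n/p_->0$), landing on $\ca(\{\lz_j\}_{j\in\nn},\{B_j\}_{j\in\nn})$. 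The same tail estimates show $\sum_j\lz_ja_j$ converges in $H^{p(\cdot)}(\rn)$.

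For part (ii), one runs a Calder\'on--Zygmund decomposition of $f\in H^{p(\cdot)}(\rn)$ at the dyadic heights $2^k$: put $\Omega_k:=\{x\in\rn:\cm_N(f)(x)>2^k\}$, take a Whitney-type covering $\{B_{k,i}\}_i$ of the open set $\Omega_k$ with bounded overlap and a subordinate smooth partition of unity $\{\zeta_{k,i}\}_i$, and write $f=g_k+b_k$ with $b_k:=\sum_i(f-P_{k,i})\zeta_{k,i}$, where $P_{k,i}$ is the polynomial of degree at most $s$ matching the $\zeta_{k,i}$-weighted moments of $f$. Standard estimates give $g_k\to f$ in $\cs'(\rn)$ as $k\to+\fz$ and $g_k\to0$ as $k\to-\fz$, so $f=\sum_{k\in\zz}(g_{k+1}-g_k)$ in $\cs'(\rn)$, and each difference is rewritten as $\sum_i\lz_{k,i}a_{k,i}$ with $a_{k,i}$ a $(p(\cdot),\fz,s)$-atom supported in $B_{k,i}$ and $|\lz_{k,i}|\sim2^k\|\chi_{B_{k,i}}\|_{\vp}$. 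Using the finite overlap of $\{B_{k,i}\}_i$ for each fixed $k$, the inclusion $\bigcup_iB_{k,i}\st\Omega_k$, Lemma \ref{lem 3}, and the layer-cake comparison $\|\sup_k2^k\chi_{\Omega_k}\|_{\vp}\ls\|\cm_N(f)\|_{\vp}$, one obtains $\ca(\{\lz_{k,i}\},\{B_{k,i}\})\ls\|f\|_{H^{p(\cdot)}(\rn)}$.

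The main obstacle, as everywhere in this circle of problems, is that $\|\cdot\|_{\vp}$ has no explicit formula and is not translation invariant, so one cannot simply sum $\sum_j\|\lz_ja_j\|_{H^{p(\cdot)}(\rn)}^{p_-}$. This is exactly what forces the moment order $s=\lfloor n/p_--n\rfloor$ (so that the off-diagonal decay of $\cm_N(a)$ outruns the rate $n/p_-$ at which $\|\chi_{2^iB}\|_{\vp}/\|\chi_B\|_{\vp}$ can grow, by Lemma \ref{lem 3}), and it is where Lemma \ref{lem key} and the vector-valued maximal inequality Lemma \ref{lem fs} do the decisive work of reducing estimates on infinite sums of atoms to the single quantity $\ca(\{\lz_j\}_{j\in\nn},\{B_j\}_{j\in\nn})$.
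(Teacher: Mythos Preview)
Your proposal is correct and aligns with the intended argument: the paper does not supply its own proof of this lemma but simply quotes it as \cite[Theorem~1.1]{s13}, and your sketch accurately summarizes Sawano's proof there (pointwise maximal-function estimates on atoms via Taylor remainders for part~(i), and a Calder\'on--Zygmund/Whitney decomposition at dyadic levels for part~(ii), both assembled via the variable-exponent tools recorded here as Lemmas~\ref{lem key}, \ref{lem 3}, and~\ref{lem fs}).
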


\begin{remark}
We point out that, in \cite{cw14}, Cruz-Uribe and Wang also established the
atomic characterizations of $H^{p(\cdot)}(\rn)$. However, the atomic characterization of
$H^{p(\cdot)}(\rn)$ obtained in \cite{cw14} is quite different from that of
the classical atomic characterization (and also that of \cite[Theorem 1.1]{s13}), which
was based on the atomic characterization established by Str\"omberg and Torchinsky \cite{st89}
for weighted Hardy spaces.
\end{remark}

The following proposition is an analogue of \cite[Proposition 4.7]{zcjy15}
(see also \cite{bckyy13,jy11}), its proof being omitted.
\begin{proposition}\label{pro 5}
Let $p(\cdot)\in\cp(\rn)$ with $\frac{n}{n+1}<p_-\le p_+\le 1$ and $\vez\in(0,\,\fz)$.
Suppose that $m\in L^2(\rn)$ is a function satisfying $\int_\rn m(x)\,dx=0$ and
there exists a ball $B\st\rn$ such that, for any $j\in\zz_+$,
$\|m\|_{L^2(\ujb)}\le 2^{-j\vez}|2^jB|^{\frac12}\|\chi_B\|_{\vp}^{-1}.$
Then
\begin{align*}
m=\wz{C}\lf(\sum_{j=1}^\fz 2^{-j\vez}\az_j\r)\ \ \ \text{in}\ \ L^2(\rn),
\end{align*}
where $\{\az_j\}_{j\in\nn}$ is a family of $(p(\cdot),\,2,\,0)$-atoms associated with balls
$\{2^{j+1}B\}_{j\in\nn}$ and $\wz{C}$ a positive constant independent of $m$.
\end{proposition}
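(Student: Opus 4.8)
The plan is to realize $m$ as a telescoping series built from the annular truncations $\{m\chi_{\ujb}\}_{j\in\zz_+}$, with $\ujb$ as in \eqref{eq ujb}, repairing the lack of cancellation of each truncation by subtracting a suitable multiple of a normalized characteristic function, and then arranging the bookkeeping so that the ``defects'' created at consecutive levels telescope against one another. This is the classical device for passing from a molecule to atoms; the only genuinely variable-exponent ingredient is Lemma \ref{lem 3}, which is what lets one pass between $\|\chi_B\|_{\vp}$ and $\|\chi_{2^{j+1}B}\|_{\vp}$.

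First I would put $g_j:=m\chi_{\ujb}$, so that $m=\sum_{j\in\zz_+}g_j$ in $L^2(\rn)$, and set $b_j:=\int_\rn g_j(x)\,dx$; the hypothesis $\int_\rn m=0$ forces $\sum_{j\in\zz_+}b_j=0$, and the Cauchy--Schwarz inequality together with the size hypothesis on $m$ gives $|b_j|\le\|m\|_{L^2(\ujb)}|\ujb|^{1/2}\ls 2^{-j\vez}|2^jB|\,\|\chi_B\|_{\vp}^{-1}$. Next, let $h_j:=|2^{j+1}B|^{-1}\chi_{2^{j+1}B}$, so that $\int_\rn h_j=1$ and $\|h_j\|_{L^2(\rn)}=|2^{j+1}B|^{-1/2}$, and let $S_j:=\sum_{k=0}^j b_k$, so that $S_j\to 0$ and hence $\|S_jh_j\|_{L^2(\rn)}\to 0$ as $j\to\fz$. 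An Abel summation then yields $\sum_{j\in\zz_+}b_jh_j=\sum_{j\in\zz_+}S_j(h_j-h_{j+1})$ in $L^2(\rn)$. Setting $\phi_0:=g_0-b_0h_0$ and, for $j\in\nn$, $\phi_j:=(g_j-b_jh_j)+S_{j-1}(h_{j-1}-h_j)$, one obtains $m=\sum_{j\in\zz_+}\phi_j$ in $L^2(\rn)$; moreover each $\phi_j$ is supported in $2^{j+1}B$ and $\int_\rn\phi_j=0$ (because every $h_k$ has integral $1$), so each $\phi_j$ has the support and the cancellation required of a $(p(\cdot),2,0)$-atom associated with $2^{j+1}B$ --- recall that here the order of vanishing moments is $s=\lfloor n/p_--n\rfloor=0$, since $p_-\in(\frac n{n+1},1]$. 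The $j=0$ term, supported in $2B\st 4B$, is absorbed into the $j=1$ one.

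It then remains to bound $\|\phi_j\|_{L^2(\rn)}$ by a fixed constant multiple of $2^{-j\vez}|2^{j+1}B|^{1/2}\|\chi_{2^{j+1}B}\|_{\vp}^{-1}$. The contributions of $g_j$ and $b_jh_j$ are immediate from the size hypothesis and the above bound for $|b_j|$, since $\|g_j\|_{L^2(\rn)}\le 2^{-j\vez}|2^jB|^{1/2}\|\chi_B\|_{\vp}^{-1}$ and $\|b_jh_j\|_{L^2(\rn)}=|b_j|\,|2^{j+1}B|^{-1/2}\ls 2^{-j\vez}|2^jB|^{1/2}\|\chi_B\|_{\vp}^{-1}$. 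For the telescoping term, one estimates $S_{j-1}$ by summing the size hypothesis, writing $S_{j-1}=\sum_{k<j}b_k=\int_{2^{j-1}B}m$ or, using $\sum_kb_k=0$, $S_{j-1}=-\sum_{k\ge j}b_k$ --- whichever representation produces a geometric tail --- to get $|S_{j-1}|\ls 2^{-j\vez}|2^jB|\,\|\chi_B\|_{\vp}^{-1}$, whence $\|S_{j-1}(h_{j-1}-h_j)\|_{L^2(\rn)}\ls|S_{j-1}|\,|2^jB|^{-1/2}\ls 2^{-j\vez}|2^jB|^{1/2}\|\chi_B\|_{\vp}^{-1}$. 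Finally, applying Lemma \ref{lem 3} to the inclusion $B\st 2^{j+1}B$ replaces $|2^jB|^{1/2}\|\chi_B\|_{\vp}^{-1}$ by a constant multiple of $|2^{j+1}B|^{1/2}\|\chi_{2^{j+1}B}\|_{\vp}^{-1}$, producing a constant $\wz C$, independent of $m$ and $j$, with $\|\phi_j\|_{L^2(\rn)}\le\wz C\,2^{-j\vez}|2^{j+1}B|^{1/2}\|\chi_{2^{j+1}B}\|_{\vp}^{-1}$. Then $\az_j:=\wz C^{-1}2^{j\vez}\phi_j$ is, for each $j\in\nn$, a $(p(\cdot),2,0)$-atom associated with $2^{j+1}B$, and $m=\wz C\sum_{j\in\nn}2^{-j\vez}\az_j$, the series converging in $L^2(\rn)$ because $2^{-j\vez}\|\az_j\|_{L^2(\rn)}\ls 2^{-j\vez}|2^{j+1}B|^{1/2}\|\chi_{2^{j+1}B}\|_{\vp}^{-1}$ decays geometrically in $j$ (here $p_+\le 1$ enters, again through Lemma \ref{lem 3}).

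The step I expect to be the crux is the size estimate for the telescoping corrections $S_{j-1}(h_{j-1}-h_j)$: one must simultaneously extract the full geometric factor $2^{-j\vez}$ and the correct normalization relative to the dilated ball $2^{j+1}B$, which forces one to estimate the partial sums $S_j$ either ``from the near end'' or ``from the far end'' according to the size of $\vez$ and to take $\vez$ large enough for the relevant tail to be geometric and for Lemma \ref{lem 3} to absorb the loss coming from dilating $B$ to $2^{j+1}B$; this is precisely the range in which the proposition is applied in Section \ref{s5.3}. The hypothesis $\int_\rn m=0$ is indispensable exactly here, since it is what makes the tails $\sum_{k\ge j}b_k$ small rather than merely bounded. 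A minor additional point is to check that the Abel-rearranged series $\sum_{j}S_j(h_j-h_{j+1})$ and the identity $m=\sum_j\phi_j$ are legitimate in $L^2(\rn)$; this follows from $\|S_jh_j\|_{L^2(\rn)}\to 0$ together with the absolute convergence of $\sum_j(\|g_j\|_{L^2(\rn)}+\|b_jh_j\|_{L^2(\rn)})$.
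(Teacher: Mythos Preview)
The paper does not give its own proof of this proposition; it simply declares it ``an analogue of \cite[Proposition 4.7]{zcjy15}'' and omits the details. Your construction --- splitting $m$ into annular pieces $g_j$, subtracting multiples of normalized characteristic functions $h_j$ to restore cancellation, and Abel--summing the corrections --- is precisely the classical molecule--to--atom device used in the cited references, so in spirit you are doing what the paper invokes.

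There is, however, a concrete error at the last step. You assert that Lemma~\ref{lem 3} lets one replace $|2^jB|^{1/2}\|\chi_B\|_{\vp}^{-1}$ by a $j$-independent constant multiple of $|2^{j+1}B|^{1/2}\|\chi_{2^{j+1}B}\|_{\vp}^{-1}$. This is false: since $B\subset 2^{j+1}B$ one has $\|\chi_B\|_{\vp}\le\|\chi_{2^{j+1}B}\|_{\vp}$, so the inequality between $\|\chi_B\|_{\vp}^{-1}$ and $\|\chi_{2^{j+1}B}\|_{\vp}^{-1}$ goes the \emph{wrong} way; Lemma~\ref{lem 3} only yields $\|\chi_B\|_{\vp}^{-1}\lesssim 2^{jn/p_-}\|\chi_{2^{j+1}B}\|_{\vp}^{-1}$. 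Consequently your $\alpha_j:=\wz C^{-1}2^{j\vez}\phi_j$ satisfy only $\|\alpha_j\|_{L^2(\rn)}\lesssim 2^{jn/p_-}|2^{j+1}B|^{1/2}\|\chi_{2^{j+1}B}\|_{\vp}^{-1}$ and are not $(p(\cdot),2,0)$-atoms associated with $2^{j+1}B$. In fact the same computation shows that the statement, read literally with the \emph{same} $\vez$ in hypothesis and conclusion and with genuine atoms normalized to $2^{j+1}B$, cannot hold for all $\vez\in(0,\fz)$. What your construction does establish --- and what suffices for the only application in the paper (proof of Theorem~\ref{thm 6}) --- is $\|\phi_j\|_{L^2(\rn)}\lesssim 2^{-j\vez}|2^jB|^{1/2}\|\chi_B\|_{\vp}^{-1}$; in that proof one has $\vez=\tz>n/p_-$ and the very next step replaces $\|\chi_{2^{k+1}B_j}\|_{\vp}$ by $\|\chi_{B_j}\|_{\vp}$ in the $\ca$-functional anyway, so the normalization discrepancy is harmless there. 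You should either record the conclusion with coefficients $2^{-j\vez}\|\chi_{2^{j+1}B}\|_{\vp}/\|\chi_B\|_{\vp}$ (and atoms relative to $2^{j+1}B$), or keep the coefficients $2^{-j\vez}$ but state the size bound for $\phi_j$ relative to $\|\chi_B\|_{\vp}$ and carry that through the application directly.
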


To establish the boundedness of $\nabla L^{-1/2}$ on $\vhp$, we also need
the following technical lemma, which was proved in \cite[Theorem 3.4]{hm09}.
\begin{lemma}[\cite{hm09}]\label{lem 5.1}
Let $L$ be the second order divergence form elliptic operator as in \eqref{eq op}.
Then there exist positive constants $C$ and $M\in\nn$ with $M>n/4$ such that, for any $t\in(0,\,\fz)$,
closed subsets $E,\,F\st\rn$ with $\dist(E,\,F)>0$ and $f\in L^2(\rn)$ with $\supp f\st E$,
\begin{align*}
\lf\|\nabla L^{-1/2}\lf(I-e^{-tL}\r)^M(f)\r\|_{L^2(F)}
\le C\lf(\frac{t}{[\dist(E,\,F)]^2}\r)^M\|f\|_{L^2(E)}
\end{align*}
and
\begin{align*}
\lf\|\nabla L^{-1/2}\lf(tLe^{-tL}\r)^M(f)\r\|_{L^2(F)}
\le C\lf(\frac{t}{[\dist(E,\,F)]^2}\r)^M\|f\|_{L^2(E)}.
\end{align*}
\end{lemma}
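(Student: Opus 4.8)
The plan is to derive both estimates from a single scale-invariant off-diagonal bound for the model operator $\nabla L^{-1/2}(\sigma L)^{M}e^{-\sigma L}$ and then to sum it, using only elementary identities of the holomorphic functional calculus; here $M\in\nn$ with $M>n/4$ is fixed but otherwise free, the restriction $M>n/4$ being needed only for the downstream applications, not for the proof. Write $d:=\dist(E,F)$. Since $(tLe^{-tL})^{M}=M^{-M}(MtL)^{M}e^{-MtL}$ and, from $I-e^{-tL}=\int_{0}^{t}Le^{-uL}\,du$,
\[
(I-e^{-tL})^{M}=\int_{[0,t]^{M}}L^{M}e^{-\sigma(\vec u)L}\,d\vec u,\qquad \sigma(\vec u):=u_{1}+\cdots+u_{M}\in(0,Mt],
\]
while $\nabla L^{-1/2}L^{M}e^{-\sigma L}=\sigma^{-M}\nabla L^{-1/2}(\sigma L)^{M}e^{-\sigma L}$, both conclusions follow once one shows that there is $C\in(0,\fz)$ such that, for all $\sigma\in(0,\fz)$, all closed $E,F\st\rn$ with $d>0$, and all $f\in L^{2}(\rn)$ with $\supp f\st E$,
\begin{equation}
\lf\|\nabla L^{-1/2}(\sigma L)^{M}e^{-\sigma L}(f)\r\|_{L^{2}(F)}\le C\lf(\frac{\sigma}{d^{2}}\r)^{M}\|f\|_{L^{2}(E)}.\tag{$\star$}
\end{equation}
Indeed, $(\star)$ with $\sigma=Mt$ gives the second estimate of the lemma, while integrating $\sigma^{-M}(\sigma/d^{2})^{M}\equiv d^{-2M}$ over $[0,t]^{M}$ gives the first, with constant $t^{M}d^{-2M}=(t/d^{2})^{M}$.

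To prove $(\star)$ I would use three facts about $L=-\div(A\nabla)$: (i) $\nabla L^{-1/2}$ is bounded on $L^{2}(\rn)$ — equivalently, $D(L^{1/2})=H^{1}(\rn)$ with $\|\nabla h\|_{L^{2}(\rn)}\sim\|L^{1/2}h\|_{L^{2}(\rn)}$ — by the solution of the Kato square root problem \cite{ahlmt02}; (ii) $\{\sqrt{\sigma}\,\nabla e^{-\sigma L}\}_{\sigma>0}$ satisfies the Davies--Gaffney estimates \eqref{eq-dg}, which follows from Assumption \ref{as-b} (valid by Remark \ref{rem Assu*}(i)) and the ellipticity of $A$ via a Caccioppoli-type cutoff argument (see \cite{hm09,au07}); and (iii) for each $k\in\nn$ the family $\{(\sigma L)^{k}e^{-\sigma L}\}_{\sigma>0}$ satisfies \eqref{eq-dg} by Remark \ref{rem Assu}(i). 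The bound $\|\nabla L^{-1/2}(\sigma L)^{M}e^{-\sigma L}\|_{L^{2}\to L^{2}}=\sigma^{M}\|\nabla L^{-1/2}\,L^{M}e^{-\sigma L}\|_{L^{2}\to L^{2}}\ls\sigma^{M}\|L^{M}e^{-\sigma L}\|_{L^{2}\to L^{2}}\ls1$ follows from (i) and the analyticity of the semigroup, and this already yields $(\star)$ when $d\le\sqrt{\sigma}$. For $d>\sqrt{\sigma}$, set $g:=(\sigma L)^{M}e^{-\sigma L}f$ and split $g=g\chi_{E_{d/2}}+\sum_{j\ge1}g\chi_{A_{j}}$, where $E_{d/2}$ is the $(d/2)$-neighbourhood of $E$ and $A_{j}:=\{x:2^{j-1}d\le\dist(x,E)<2^{j}d\}$; by (iii), $\|g\chi_{A_{j}}\|_{L^{2}(\rn)}\ls e^{-c4^{j}d^{2}/\sigma}\|f\|_{L^{2}(E)}$, so the $L^{2}$-boundedness of $\nabla L^{-1/2}$ and summation over $j\ge1$ contribute $\ls e^{-cd^{2}/\sigma}\|f\|_{L^{2}(E)}\ls(\sigma/d^{2})^{M}\|f\|_{L^{2}(E)}$. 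The remaining piece $\nabla L^{-1/2}(g\chi_{E_{d/2}})$, which lives on $F$ at distance $\ge d/2$ from $\supp(g\chi_{E_{d/2}})$, is handled through the local energy (Caccioppoli) inequality applied to $h:=L^{-1/2}g\in D(L)$,
\[
\|\nabla(L^{-1/2}g)\|_{L^{2}(F)}\ls\|L^{1/2}g\|_{L^{2}(F_{d/2})}^{1/2}\,\|L^{-1/2}g\|_{L^{2}(F_{d/2})}^{1/2}+d^{-1}\|L^{-1/2}g\|_{L^{2}(F_{d/2})},
\]
with $F_{d/2}$ still at distance $\ge d/2$ from $E$; one then controls $\|L^{1/2}g\|_{L^{2}(F_{d/2})}$ and $\|L^{-1/2}g\|_{L^{2}(F_{d/2})}$, that is, $\|(\sigma L)^{M+1/2}e^{-\sigma L}f\|_{L^{2}(F_{d/2})}$ and $\|(\sigma L)^{M-1/2}e^{-\sigma L}f\|_{L^{2}(F_{d/2})}$ up to explicit powers of $\sigma$, by off-diagonal estimates for these functions of $L$ coming from Assumptions \ref{as-a} and \ref{as-b} via the functional calculus, together with the composition lemma for Davies--Gaffney families (Remark \ref{rem Assu}(iii), \cite[Lemma 2.3]{hm03}). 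Bookkeeping of the exponents (the two semigroup factors carrying decay of orders $M+\frac12$ and $M-\frac12$ in $\sigma/d^{2}$) then produces exactly the gain $(\sigma/d^{2})^{M}$.

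The main obstacle is the non-local factor $\nabla L^{-1/2}$: being of Calder\'on--Zygmund type, it carries no off-diagonal decay of its own, so every power of $\sigma/d^{2}$ in $(\star)$ must be extracted from the $M$-fold heat factor. This is why the estimate is only polynomial (of order $M$) rather than Gaussian, and the genuinely delicate region is the near-diagonal one, where the $L^{2}$-boundedness of $\nabla L^{-1/2}$ must be coupled — through the divergence structure of $L$, via the Caccioppoli inequality — with the Davies--Gaffney decay of $(\sigma L)^{M}e^{-\sigma L}$ and $\sqrt{\sigma}\,\nabla e^{-\sigma L}$. (Since the statement coincides with \cite[Theorem 3.4]{hm09}, one may alternatively invoke that theorem directly.)
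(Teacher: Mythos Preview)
The paper does not prove this lemma at all; it simply quotes \cite[Theorem 3.4]{hm09}, as you yourself note in the last sentence. So there is nothing to compare against in the paper itself, and your closing parenthetical is already the ``proof'' the paper gives.

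As for your reconstruction of an argument: the reduction to $(\star)$ is correct and clean, and both claimed consequences follow exactly as you say. In the proof of $(\star)$, however, the decomposition of $g=(\sigma L)^{M}e^{-\sigma L}f$ into near and far pieces is redundant: once you apply the Caccioppoli inequality to the full $h=L^{-1/2}g$, you are already bounding $\lf\|\nabla L^{-1/2}(\sigma L)^{M}e^{-\sigma L}f\r\|_{L^{2}(F)}$ directly, so the far-piece estimate adds nothing. Drop the splitting and go straight to Caccioppoli. The remaining input you need is Davies--Gaffney for the half-integer families $\{(\sigma L)^{M\pm 1/2}e^{-\sigma L}\}_{\sigma>0}$; this is true, but it is a functional-calculus fact (these are $\psi(\sigma L)$ with $\psi(z)=z^{M\pm 1/2}e^{-z}\in\Psi(S_\mu^0)$) and is \emph{not} covered by Remark~\ref{rem Assu}(i), which only treats integer $k$. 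So your route works, but it imports an extra off-diagonal lemma.

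The proof actually given in \cite{hm09} is different and avoids fractional powers altogether: one writes
\[
\nabla L^{-1/2}(\sigma L)^{M}e^{-\sigma L}
= c\int_{0}^{\infty}\sigma^{M}\,\nabla L^{M}e^{-(s+\sigma)L}\,\frac{ds}{\sqrt{s}}
= c\int_{0}^{\infty}\frac{\sigma^{M}}{(s+\sigma)^{M+1/2}}\,\sqrt{s+\sigma}\,\nabla\bigl((s+\sigma)L\bigr)^{M}e^{-(s+\sigma)L}\,\frac{ds}{\sqrt{s}},
\]
uses the Davies--Gaffney bound for $\sqrt{\tau}\,\nabla(\tau L)^{M}e^{-\tau L}$ (your facts (ii) and (iii) composed), and integrates the resulting scalar kernel $\sigma^{M}(s+\sigma)^{-M-1/2}e^{-cd^{2}/(s+\sigma)}s^{-1/2}$ in $s$ to get $(\sigma/d^{2})^{M}$. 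This is more economical than the Caccioppoli detour since it only uses integer-power heat estimates plus the gradient-semigroup bound; your approach, by contrast, trades the $s$-integration for a local energy estimate at the cost of needing off-diagonal bounds for non-integer powers of $L$.
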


\begin{theorem}\label{thm 6}
Let $p(\cdot)\in C^{\log}(\rn)$ with $\frac{n}{n+1}<p_-\le p_+\le 1$ and $L$ be
the second order divergence form elliptic operator as in \eqref{eq op}. Then there exists
a positive constant $C$ such that, for any $f\in\vhp$,
\begin{align}\label{eq 5.z1}
\lf\|\nabla L^{-1/2}(f)\r\|_{H^{p(\cdot)}(\rn)}\le C\|f\|_{\vhp}.
\end{align}
\end{theorem}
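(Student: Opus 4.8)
The plan is to reduce \eqref{eq 5.z1} to a molecular estimate and then apply the atomic characterization of $H^{p(\cdot)}(\rn)$ from Lemma \ref{lem 5.31}(i), in a way parallel to the proof of Theorem \ref{thm 5}. Since $\vhp\cap L^2(\rn)$ is dense in $\vhp$, it suffices to prove \eqref{eq 5.z1} for $f\in\vhp\cap L^2(\rn)$. First I would invoke Proposition \ref{pro 2} to decompose $f=\sum_{j}\lz_jm_j$ in $L^2(\rn)$, where each $m_j$ is a $(p(\cdot),\,M,\,\vez)_L$-molecule associated with a ball $B_j$, with $M\in\nn\cap(\frac n2[\frac1{p_-}-\frac12],\,\fz)$ large (in particular $M>n/4$, matching Lemma \ref{lem 5.1}) and $\vez\in(\frac n{p_-},\,\fz)$, and with $\ca(\{\lz_j\}_{j\in\nn},\,\{B_j\}_{j\in\nn})\ls\|f\|_{\vhp}$. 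Since the Riesz transform $\nabla L^{-1/2}$ is bounded on $L^2(\rn)$, the series $\nabla L^{-1/2}(f)=\sum_j\lz_j\nabla L^{-1/2}(m_j)$ converges in $L^2(\rn)$.

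The core step is to show that, for each molecule $m$ (associated with a ball $B$), the function $\nabla L^{-1/2}(m)$ is, up to a fixed constant, a superposition of $H^{p(\cdot)}(\rn)$-atoms in a suitably controlled way. Concretely, I would prove that there exists $\tz\in(\frac n{p_-},\,\fz)$ such that, for every $k\in\zz_+$,
\begin{align*}
\lf\|\nabla L^{-1/2}(m)\r\|_{L^2(U_k(B))}\ls 2^{-k\tz}|2^kB|^{1/2}\|\chi_B\|_{\vp}^{-1},
\end{align*}
together with the cancellation $\int_\rn\nabla L^{-1/2}(m)(x)\,dx=0$ (which follows since $\nabla L^{-1/2}(m)$ is a distributional gradient of an $L^2$ function with appropriate decay). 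The decay estimate over annuli $U_k(B)$ would be obtained by splitting, as in \eqref{eq 5.yy},
$\nabla L^{-1/2}(m)=\nabla L^{-1/2}(I-e^{-r_B^2L})^M(m)+\nabla L^{-1/2}[I-(I-e^{-r_B^2L})^M](m)$,
expanding the second term via the binomial theorem into terms involving $r_B^2Le^{-\frac{l}{M}r_B^2L}$ and the quantities $(r_B^{-2}L^{-1})^M(m)$; for the "far" pieces one uses Lemma \ref{lem 5.1} (the off-diagonal decay of $\nabla L^{-1/2}(I-e^{-tL})^M$ and of $\nabla L^{-1/2}(tLe^{-tL})^M$), while for the "near" pieces one uses $L^2$-boundedness of $\nabla L^{-1/2}$ together with the molecular bounds and Remark \ref{r-0902}; the local parts near $U_k(B)$ are handled by the Davies-Gaffney estimates from Assumption \ref{as-b} exactly as in the estimates of ${\rm I}_1$ and ${\rm II}$ in the proof of Theorem \ref{thm 5}. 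Once this annular estimate and the vanishing moment are in hand, Proposition \ref{pro 5} produces, for each $j$, a decomposition $\nabla L^{-1/2}(m_j)=\wz C\sum_{k\ge1}2^{-k\vez'}\az_{j,k}$ into $(p(\cdot),\,2,\,0)$-atoms $\az_{j,k}$ associated with the balls $2^{k+1}B_j$.

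Finally I would assemble everything: writing $\nabla L^{-1/2}(f)=\wz C\sum_j\sum_{k\ge1}\lz_j2^{-k\vez'}\az_{j,k}$ and applying Lemma \ref{lem 5.31}(i), the $H^{p(\cdot)}(\rn)$-norm is bounded by $\ca(\{\lz_j2^{-k\vez'}\}_{j,k},\,\{2^{k+1}B_j\}_{j,k})$. This quantity is controlled, using Remark \ref{rem 1}(iii), Lemma \ref{lem 3} (to compare $\|\chi_{2^{k+1}B_j}\|_{\vp}$ with $\|\chi_{B_j}\|_{\vp}$, giving a factor $\ls 2^{kn/p_-}$), the pointwise bound $\chi_{2^{k+1}B_j}\ls 2^{kn}\cm(\chi_{B_j})$, and the Fefferman-Stein inequality (Lemma \ref{lem fs}), by a geometric series in $k$ (convergent provided $\vez'$ is chosen larger than $n/p_-$) times $\ca(\{\lz_j\}_{j\in\nn},\,\{B_j\}_{j\in\nn})\ls\|f\|_{\vhp}$ — this is precisely the kind of summation carried out in \eqref{eq 5.y3x}--\eqref{eq 5.y5x}. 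I expect the main obstacle to be the molecular estimate in the second paragraph: tracking the binomial expansion of $[I-(I-e^{-r_B^2L})^M]$ and combining Lemma \ref{lem 5.1} with the Davies-Gaffney estimates so that the resulting exponent $\tz$ (and hence the decay rate $\vez'$ fed into Proposition \ref{pro 5}) genuinely exceeds $n/p_-$, which forces $M$ to be taken large relative to $p_-$; verifying the vanishing-moment condition $\int_\rn\nabla L^{-1/2}(m)\,dx=0$ rigorously (rather than formally) is a secondary technical point, handled via the known characterization of the domain of $L^{1/2}$ as $H^1(\rn)$ together with the decay of the molecule.
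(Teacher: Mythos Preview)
Your proposal is correct and follows essentially the same route as the paper: density reduction, molecular decomposition via Proposition \ref{pro 2}, the annular $L^2$ estimate \eqref{eq 5.7} for $\nabla L^{-1/2}(m)$ obtained by the $(I-e^{-r_B^2L})^M$ splitting together with Lemma \ref{lem 5.1} for the far pieces and $L^2$-boundedness for the near pieces, the vanishing-moment fact (which the paper justifies by citing \cite[Theorem 7.4]{jy10}), Proposition \ref{pro 5}, and finally Lemma \ref{lem 5.31}(i) with the Fefferman--Stein summation as in \eqref{eq 5.y3x}--\eqref{eq 5.y5x}. Two small remarks: the Davies--Gaffney estimates are not actually invoked in the annular bound here (Lemma \ref{lem 5.1} and the molecular decay already suffice, cf.\ \eqref{eq 5.6}); and in the final summation you do not need Lemma \ref{lem 3} to compare $\|\chi_{2^{k+1}B_j}\|_{\vp}$ with $\|\chi_{B_j}\|_{\vp}$, since the trivial monotonicity $\|\chi_{2^{k+1}B_j}\|_{\vp}\ge\|\chi_{B_j}\|_{\vp}$ is enough.
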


\begin{proof}
Since $\vhp\cap L^2(\rn)$ is dense in $\vhp$, to prove Theorem \ref{thm 6}, we only need to
show that \eqref{eq 5.z1} holds true for all $f\in\vhp\cap L^2(\rn)$.

By Proposition \ref{pro 2}, we find that, for any $f\in\vhp\cap L^2(\rn)$,
$M\in\nn$ and $\vez\in(0,\,\fz)$,
there exist $\{\lz_j\}_{j\in\nn}\st\cc$ and a family $\{m_j\}_{j\in\nn}$ of
$(p(\cdot),\,M,\,\vez)_L$-molecules associated with balls $\{B_j\}_{j\in\nn}$ of $\rn$ such that
\begin{align}\label{eq 5.z2}
f=\sum_{j=1}^\fz\lz_jm_j\ \ \ \text{in}\ \ L^2(\rn)
\end{align}
and
\begin{align}\label{eq 5.z5}
\ca(\{\lz_j\}_{j\in\nn},\,\{B_j\}_{j\in\nn})\ls \|f\|_{\vhp}.
\end{align}
From \eqref{eq 5.z2}, the boundedness of $\nabla L^{-1/2}$ on $L^2(\rn)$
(see \cite[Theorem 1.4]{ahlmt02}) and Riesz theorem,
we deduce that
\begin{align}\label{eq 5.z3}
\nabla L^{-1/2}(f)=\sum_{j=1}^\fz\lz_j\nabla L^{-1/2}(m_j)\ \ \ \text{in}\ \ L^2(\rn).
\end{align}
Here and hereafter, for any $g\in L^2(\rn)$, let
$$\nabla L^{-1/2}(g):=\lf(\frac{\partial}{\partial x_1}L^{-1/2}(g),\,\ldots,\,
\frac{\partial}{\partial x_n}L^{-1/2}(g)\r)=:
(\partial_1 L^{-1/2}(g),\,\ldots,\,\partial_n L^{-1/2}(g)).$$

Let $M\in\nn\cap(\frac n2[\frac1{p_-}-\frac12],\,\fz)$ and
$\vez\in(\frac{n}{p_-},\,\fz)$.
Next, we show that, for any $(p(\cdot),\,M,\,\vez)_L$-molecule
$m$, associated with ball $B:= B(x_B,\,r_B)\st\rn$ with $x_B\in\rn$
and $r_B\in(0,\,\fz)$, and $j\in\zz_+$,
\begin{align}\label{eq 5.7}
\lf\|\lf[\sum_{l=1}^n\lf|\partial_l L^{-1/2}(m)\r|^2\r]^{\frac12}\r\|_{L^2(\ujb)}
&=:\lf\|\nabla L^{-1/2}(m)\r\|_{L^2(\ujb)}\noz\\
&\ls 2^{-j\tz}|2^jB|^{\frac12}\|\chi_B\|_{\vp}^{-1},
\end{align}
where $\tz\in(\frac{n}{p_-},\,\fz)$.

Indeed, when $j\in\{0,\,\ldots,\,10\}$, from the boundedness of $\nabla L^{-1/2}$ on $L^2(\rn)$
(see \cite[Theorem 1.4]{ahlmt02}) and Remark \ref{r-0902}, it follows that
\begin{align*}
\lf\|\nabla L^{-1/2}(m)\r\|_{L^2(\ujb)}\ls \|m\|_{L^2(\rn)}\ls|B|^{\frac12}\|\chi_B\|_{\vp}^{-1}.
\end{align*}
When $j\in\zz_+\cap[11,\,\fz)$, we write
\begin{align}\label{eq 5.8}
&\lf\|\nabla L^{-1/2}(m)\r\|_{L^2(\ujb)}\noz\\
&\hs\le\lf\|\nabla L^{-1/2}\lf(I-e^{-r_B^2L}\r)^M(m)\r\|_{L^2(\ujb)}\noz\\
&\hs\hs+\lf\|\nabla L^{-1/2}\lf[I-\lf(I-e^{-r_B^2L}\r)^M\r](m)\r\|_{L^2(\ujb)}\noz\\
&\hs\ls\lf\|\nabla L^{-1/2}\lf(I-e^{-r_B^2L}\r)^M(m)\r\|_{L^2(\ujb)}\noz\\
&\hs\hs+\sup_{1\le k\le M}\lf\|\nabla L^{-1/2}\lf(\frac{k}{M}r_B^2L e^{-\frac{k}{M}r_B^2L}\r)^M
\lf(r_B^{-2}L^{-1}\r)^M(m)\r\|_{L^2(\ujb)}\noz\\
&\hs=:{\rm I}+{\rm II}.
\end{align}

We first estimate ${\rm I}$. For any $j\in\zz_+\cap[11,\,\fz)$, let
$S_j(B):=(2^{j+1}B)\setminus (2^{j-2}B)$. It is easy to see that
$\dist([S_j(B)]^\com,\,\ujb)\sim 2^jr_B$. From this, the boundedness of $\nabla L^{-1/2}$
on $L^2(\rn)$ (see \cite[Theorem 1.4]{ahlmt02}),
Lemma \ref{lem 5.1} and Remark \ref{r-0902}, we deduce that
\begin{align}\label{eq 5.6}
{\rm I}
&\le\lf\|\nabla L^{-1/2}\lf(I-e^{-r_B^2L}\r)^M\lf(m\chi_{S_j(B)}\r)\r\|_{L^2(\ujb)}\noz\\
&\hs+\lf\|\nabla L^{-1/2}\lf(I-e^{-r_B^2L}\r)^M\lf(m\chi_{[S_j(B)]^\com}\r)\r\|_{L^2(\ujb)}\noz\\
&\ls\|m\|_{L^2(S_j(B))}+\lf(\frac{r_B}{2^jr_B}\r)^{2M}\|m\|_{L^2(\rn)}\noz\\
&\ls\lf[2^{-j\vez}+2^{-j(2M+\frac{n}2)}\r]|2^jB|^{\frac12}\|\chi_B\|_{\vp}^{-1}.
\end{align}
By an argument similar to that used in the proof of \eqref{eq 5.6}, we have
\begin{align*}
{\rm II}
&\le\sup_{1\le k\le M}\lf\|\nabla L^{-1/2}\lf(\frac{k}{M}r_B^2L e^{-\frac{k}{M}r_B^2L}\r)^M
\lf[\lf(r_B^{-2}L^{-1}\r)^M(m)\chi_{S_j(B)}\r]\r\|_{L^2(\ujb)}\\
&\hs+\sup_{1\le k\le M}\lf\|\nabla L^{-1/2}\lf(\frac{k}{M}r_B^2L e^{-\frac{k}{M}r_B^2L}\r)^M
\lf[\lf(r_B^{-2}L^{-1}\r)^M(m)\chi_{[S_j(B)]^\com}\r]\r\|_{L^2(\ujb)}\\
&\ls\lf\|\lf(r_B^{-2}L^{-1}\r)(m)\r\|_{L^2(S_j(B))}+\lf(\frac{r_B}{2^jr_B}\r)^{2M}
\lf\|\lf(r_B^{-2}L^{-1}\r)(m)\r\|_{L^2(\rn)}\\
&\ls\lf[2^{-j\vez}+2^{-j(2M+\frac{n}2)}\r]|2^jB|^{\frac12}\|\chi_B\|_{\vp}^{-1}.
\end{align*}
This, together with \eqref{eq 5.6} and \eqref{eq 5.8}, implies
\eqref{eq 5.7} with
$$\tz:=\min\lf\{\vez,\,2M+\frac{n}2\r\}\in\lf(\frac{n}{p_-},\,\fz\r).$$

On the other hand, by an argument similar to that used in
the proof of \cite[Theorem 7.4]{jy10}, we know that,
for any $(p(\cdot),\,M,\,\vez)_L$-molecule $m$ and $l\in\{1,\,\ldots,n\}$,
$$\int_\rn \partial_l L^{-1/2}(m)(x)\,dx=0.$$
From this, \eqref{eq 5.7}, Proposition \ref{pro 5} and \eqref{eq 5.z3}, it follows that,
for any $l\in\{1,\,\ldots,\,n\}$,
\begin{align}\label{eq 5.z4}
\partial_l L^{-1/2}(f)= \wz{C}\lf(\sum_{j=1}^\fz\sum_{k=1}^\fz\lz_j
2^{-k\tz}\az_{j,\,k}\r)\ \ \ \text{in}\ \ L^2(\rn),
\end{align}
where $\{\az_{j,\,k}\}_{j,\,k\in\nn}$ is a family of $(p(\cdot),\,2,\,0)$-atoms
associated with balls $\{2^{k+1}B_j\}_{j,\,k\in\nn}$ and
$\wz C$ is a positive constant independent of $f$.
Noticing that $p_-\in(\frac{n}{n+1},\,1]$, we then know that
$s:=\lfloor n(\frac{1}{p_-}-1)\rfloor=0$.
From this, Lemma \ref{lem 5.31}(i), \eqref{eq 5.z4}, Remark \ref{rem 1},
an argument similar to that used in \eqref{eq 5.y3x} and \eqref{eq 5.y5x},
the fact that $\tz\in(\frac{n}{p_-},\,\fz)$ and \eqref{eq 5.z5},
we deduce that, for any $l\in\{1,\,\ldots,\,n\}$ and $f\in\vhp\cap L^2(\rn)$,
\begin{align*}
\lf\|\partial_l L^{-1/2}(f)\r\|_{H^{p(\cdot)}(\rn)}
&\ls\ca\lf(\lf\{\lz_j2^{-k\tz}\r\}_{j,\,k\in\nn},\,\lf\{2^{k+1}B_j\r\}_{j,\,k\in\nn}\r)\\
&\sim\lf\|\sum_{k=1}^\fz 2^{-k\tz p_-}\sum_{j=1}^\fz\lf[\frac{|\lz_j|\chi_{2^{k+1}B_j}}
{\|\chi_{2^{k+1}B_j}\|_{\vp}}\r]^{p_-}\r\|_{L^{\frac{p(\cdot)}{p_-}}(\rn)}^{\frac1{p_-}}\\
&\ls\lf\{\sum_{k=1}^\fz 2^{-k\tz p_-}\lf\|\lf\{\sum_{j=1}^\fz\lf[\frac{|\lz_j|\chi_{2^{k+1}B_j}}
{\|\chi_{B_j}\|_{\vp}}\r]^{p_-}\r\}^{\frac1{p_-}}\r\|_{L^{p(\cdot)}(\rn)}
^{p_-}\r\}^{\frac1{p_-}}\\
&\ls\lf\{\sum_{k=1}^\fz 2^{-k(\tz-\frac nr) p_-}
\lf[\ca(\{\lz_j\}_{j\in\nn},\,\{B_j\}_{j\in\nn})\r]^{p_-}\r\}^{\frac1{p_-}}\\
&\ls\ca(\{\lz_j\}_{j\in\nn},\,\{B_j\}_{j\in\nn})\ls\|f\|_{\vhp},
\end{align*}
where $r\in(0,\,p_-)$ such that $\tz>\frac{n}r$.
Therefore, \eqref{eq 5.z1} holds true for any $f\in\vhp\cap L^2(\rn)$,
which completes the proof of Theorem \ref{thm 6}.
\end{proof}

\begin{remark}\label{rem 7}
When $p(\cdot)\equiv p$ with $p\in(\frac{n}{n+1},\,1]$, Theorem \ref{thm 6}
was established in \cite[Proposition 5.6]{hmm11} (see also \cite[Theorem 7.4]{jy10}).
\end{remark}

\subsection*{Acknowledgements}
The authors would like to thank Professor Sibei Yang
for some helpful conversations on this topic.
They would also like to express their deep thanks to the referee
for her/his very carefully reading and several constructive and stimulating
comments which lead to some essential improvements of the main results of this article.

%%%%%%%%%%%%%%%%%%%%%%%%%%%%%%%%%%%%%%%%%%%%%%%%%%%%%%%%%%%%%%%%%%%%%%%%%%%%%%%%%%%%%%%%%%%%%%%%%%%%

\bigskip

\noindent{Dachun Yang and Junqiang Zhang (Corresponding author)}

\medskip

\noindent{\small School of Mathematical Sciences, Beijing Normal University,
Laboratory of Mathematics and Complex Systems, Ministry of Education,
Beijing 100875, People's Republic of China}

\smallskip

\noindent{\it E-mails}: \texttt{dcyang@bnu.edu.cn} (D. Yang)

\hspace{1.12cm}\texttt{zhangjunqiang@mail.bnu.edu.cn} (J. Zhang)

\bigskip

\noindent Ciqiang Zhuo

\medskip

\noindent{\small Key Laboratory of High Performance Computing and Stochastic
Information Processing (HPCSIP) (Ministry of Education of China), College of Mathematics
and Computer Science, Hunan Normal University, Changsha, Hunan 410081, People's Republic of China}

\smallskip

\noindent {\it E-mail}: \texttt{cqzhuo@mail.bnu.edu.cn} (C. Zhuo)

\end{document}